\newif\ifarxiv
\arxivtrue
\ifarxiv
  \documentclass[11pt,letterpaper]{article}
  \usepackage[margin=1in]{geometry}
  \newenvironment{acks}[1][Acknowledgments]{\section*{#1}\pdfbookmark[1]{#1}{sec:ack}}{\par}
  \newenvironment{funding}{\par}{\par} %
\else
  \documentclass[aos]{imsart}
\fi

\RequirePackage[T1]{fontenc}
\RequirePackage[utf8]{inputenc}
\RequirePackage[canadian]{babel}
\RequirePackage{csquotes}

\RequirePackage{amsthm,amsmath,amssymb}
\RequirePackage{mathtools}
\RequirePackage[store-sets-label]{keytheorems}
\RequirePackage{dsfont}
\RequirePackage{enumitem}
\newlist{romanenum}{enumerate}{1}
\setlist[romanenum]{label=(\roman*),ref=(\roman*),leftmargin=*,widest=iii,align=right}   %

\RequirePackage{tikz}
\usetikzlibrary{decorations.pathreplacing}

\ifarxiv\else
  \makeatletter\let\c@author\undefined\makeatother
\fi
\RequirePackage[
  backend=biber,
  style=authoryear-comp,
  maxcitenames=2,
  maxbibnames=99,
  giveninits=true,
  uniquename=false,
  uniquelist=minyear,
  dashed=false,
  doi=true,
  eprint=true,
  isbn=false,
  sortcites=false,
  natbib,
]{biblatex}
\ifarxiv
\DeclareNameAlias{sortname}{given-family}
\renewbibmacro*{in:}{}
\DefineBibliographyExtras{canadian}{\uspunctuation}
\DeclareFieldFormat[misc]{title}{\mkbibquote{#1}}
\else
\renewbibmacro*{in:}{}
\DeclareNameAlias{sortname}{family-given}                %
\AtBeginBibliography{%
  \renewcommand*{\mkbibnamefamily}[1]{\textsc{#1}}%
  \renewcommand*{\mkbibnamegiven}[1]{\textsc{#1}}%
  \renewcommand*{\mkbibnameprefix}[1]{\textsc{#1}}%
  \renewcommand*{\mkbibnamesuffix}[1]{\textsc{#1}}%
}
\DeclareFieldFormat[article,inproceedings,incollection,thesis,unpublished,misc]{title}{#1}  %
\DeclareFieldFormat{journaltitle}{\mkbibemph{#1}}
\DeclareFieldFormat[article]{volume}{\mkbibbold{#1}}
\DeclareFieldFormat[article]{pages}{#1}
\renewbibmacro*{journal+issuetitle}{%
  \usebibmacro{journal}%
  \setunit*{\addspace}%
  \printfield{volume}%
  \setunit*{\addspace}%
  \printfield{pages}%
  \setunit*{\addspace}%
  \printfield{eid}%
  \newunit}
\renewbibmacro*{note+pages}{\printfield{note}\newunit}
\renewbibmacro*{publisher+location+date}{%
  \printlist{publisher}%
  \setunit*{\addcomma\space}%
  \printlist{location}%
  \newunit}
\DeclareFieldFormat{doi}{\url{https://doi.org/#1}}
\defbibheading{bibliography}[\refname]{\specialsection*{#1}}   %
\makeatletter\ifdefined\thebibliography@size\renewcommand*{\bibfont}{\thebibliography@size}\fi\makeatother
\DeclareFieldFormat{eprint:arxiv}{Available at \href{https://arxiv.org/abs/#1}{arXiv\addcolon\nolinkurl{#1}}}
\fi
\RequirePackage[colorlinks=true,allcolors=blue!55!black]{hyperref}
\RequirePackage[capitalize,noabbrev]{cleveref}
\crefformat{enumi}{#2#1#3}\Crefformat{enumi}{#2#1#3}
\crefformat{romanenumi}{#2#1#3}\Crefformat{romanenumi}{#2#1#3}  %

\crefname{appendix}{Appendix}{Appendices}
\Crefname{appendix}{Appendix}{Appendices}

\theoremstyle{plain}
\newkeytheorem{theorem}[numberwithin=section]
\newkeytheorem{corollary}[sibling=theorem]
\newkeytheorem{lemma}[sibling=theorem]
\newkeytheorem{proposition}[sibling=theorem]
\newkeytheorem{fact}[sibling=theorem,refname={Fact,Facts}]
\newkeytheorem{definition}[sibling=theorem,style=definition]

\definecolor{dens0}{HTML}{F4F6F8}   %
\definecolor{dens2}{HTML}{9FC0DA}   %
\definecolor{dens4}{HTML}{1F4E79}   %
\definecolor{rpos}{HTML}{E3D9BE}    %
\definecolor{rneg}{HTML}{6E634D}    %

\newcommand{\cellzero}[2]{\fill[dens0] (#1,#2) rectangle ++(1,1);
  \node[font=\footnotesize,text=black!45] at (#1+.5,#2+.5) {$0$};}
\newcommand{\celltwo}[2]{\fill[dens2] (#1,#2) rectangle ++(1,1);
  \node[font=\footnotesize] at (#1+.5,#2+.5) {$2$};}
\newcommand{\cellfour}[2]{\fill[dens4] (#1,#2) rectangle ++(1,1);
  \node[font=\footnotesize,text=white] at (#1+.5,#2+.5) {$4$};}

\newcommand{\panelframe}[1]{%
  \draw[white,line width=.7pt] (1,0)--(1,2) (0,1)--(2,1);
  \draw[line width=.7pt] (0,0) rectangle (2,2);
  \foreach \t/\l in {0/{$0$},1/{$\tfrac12$},2/{$1$}}{%
    \draw[line width=.5pt] (\t,0)--(\t,-.09);
    \node[font=\scriptsize,below=0pt] at (\t,-.09) {\l};
    \draw[line width=.5pt] (0,\t)--(-.09,\t);
    \node[font=\scriptsize,left=0pt] at (-.09,\t) {\l};}
  \node[font=\scriptsize] at (.5,-.7) {$x^{(1)}$};
  \node[font=\scriptsize,rotate=90] at (-.62,.5) {$y^{(1)}$};
  \node[font=\small,align=center] at (1,2.42) {#1};}

\newcommand{\indep}{\mathrel{\perp\!\!\!\perp}}
\newcommand{\nindep}{\mathrel{\not\!\perp\!\!\!\perp}}
\newcommand{\ud}{\mathop{}\!\mathrm{d}}
\DeclareMathOperator{\E}{\mathbb{E}}
\newcommand{\R}{\mathbb{R}}
\DeclareMathOperator{\indic}{\mathds{1}}
\newcommand{\cA}{\mathcal{A}}
\newcommand{\cB}{\mathcal{B}}
\newcommand{\cD}{\mathcal{D}}
\newcommand{\cE}{\mathcal{E}}
\newcommand{\cM}{\mathcal{M}}
\newcommand{\cP}{\mathcal{P}}
\newcommand{\cQ}{\mathcal{Q}}
\newcommand{\cU}{\mathcal{U}}
\newcommand{\cV}{\mathcal{V}}
\newcommand{\cW}{\mathcal{W}}
\newcommand{\cX}{\mathcal{X}}
\newcommand{\cY}{\mathcal{Y}}
\newcommand{\cZ}{\mathcal{Z}}
\newcommand{\tr}{\mathrm{tr}}
\newcommand{\eval}{\mathrm{e}}
\newcommand{\TV}{\mathrm{TV}}
\DeclareMathOperator{\supp}{supp}
\DeclareMathOperator{\vol}{vol}
\newcommand{\push}{_{\#}}
\newcommand{\tp}{^\mathsf{T}}

\newcommand{\eps}{\varepsilon}

\newcommand{\bone}{\mathbf{1}}

\DeclarePairedDelimiter{\abs}{\lvert}{\rvert}
\DeclarePairedDelimiter{\norm}{\lVert}{\rVert}
\DeclarePairedDelimiter{\floor}{\lfloor}{\rfloor}
\DeclarePairedDelimiter{\ceil}{\lceil}{\rceil}
\DeclarePairedDelimiterX{\inner}[2]{\langle}{\rangle}{#1,#2}

\begin{document}

\ifarxiv\else\begin{frontmatter}\fi
\title{Conditional Independence Is Not (Quite) Pointwise Testable}
\ifarxiv
\author{Danica~J. Sutherland\\
  {\small University of British Columbia}\\
  {\small\href{https://djsutherland.ml}{\texttt{https://djsutherland.ml}}}}
\date{}
\maketitle
\hypersetup{
  pdftitle={Conditional Independence Is Not (Quite) Pointwise Testable},
  pdfauthor={Danica J. Sutherland},
  pdfkeywords={62G10, 62G20, Conditional independence, hypothesis testing, pointwise asymptotics, no-free-lunch theorem}
}
\else
\runtitle{Conditional Independence Is Not (Quite) Pointwise Testable}

\begin{aug}
\author[ubc]{\fnms{Danica~J.}~\snm{Sutherland}\ead[label=e1]{dsuth@cs.ubc.ca}}
\address[ubc]{Department of Computer Science, University of British Columbia\printead[presep={,\ }]{e1}}
\end{aug}
\fi

\begin{abstract}
Shah and Peters showed that a conditional independence test with finite-sample or uniformly-controlled level has only trivial power against any alternative.
Most practical tests, however, only claim pointwise asymptotic level.
There have been incorrect claims in the literature of conditional independence tests with pointwise asymptotic level
and consistency against any alternative;
whether such a test actually exists has remained open.

We resolve this question.
Even restricting to hypotheses with a bounded density on compact subsets of Euclidean spaces,
for any sequence of (possibly randomized) tests
with pointwise asymptotic level $\alpha$,
for every $\varepsilon>0$ there is a conditionally dependent distribution
where the test's limsup power is at most $\alpha+\varepsilon$.
This remains true when the level control is required only over
conditionally independent distributions with a continuous density
and a uniformly continuous conditional law,
and the conditionally dependent distributions have smooth densities.

On the other hand, the extreme limitation
of trivial power against \emph{any} alternative
for uniform-level tests
does not apply to tests with pointwise level.
We exhibit a test that, without regularity assumptions,
has pointwise asymptotic level $\alpha$,
strictly higher power for all alternatives,
and power tending to one for all distributions
whose dependence exceeds a chosen scalar threshold.
\end{abstract}

\ifarxiv
\else
\begin{keyword}[class=MSC]
\kwdgroup[type=primary]{\kwd{62G10}}
\kwdgroup[type=secondary]{\kwd{62G20}}
\end{keyword}

\begin{keyword}
\kwd{Conditional independence}
\kwd{hypothesis testing}
\kwd{pointwise asymptotics}
\kwd{no-free-lunch theorem}
\end{keyword}

\end{frontmatter}
\fi

\section{Introduction}
\label{sec:introduction}
Consider the problem of testing conditional independence,
\[
  H_0 : X \indep Y \mid Z
,\]
from i.i.d.\ observations $(X_1, Y_1, Z_1), \dots, (X_n, Y_n, Z_n)$.
This problem is of profound importance:
\citet{Dawid1979Sep} identified conditional independence as
a ``language for the expression of statistical concepts and a framework for their study,''
including ``sufficiency and ancillarity, parameter identification, causal inference,
prediction sufficiency, data selection mechanisms, invariant statistical models
and a subjectivist approach to model-building.''
These areas have continued to develop in the ensuing half-century;
measuring or testing conditional dependence
is vital in causal discovery \parencite{pc,gao2026optimalstructurelearningconditional},
graphical modelling \parencite{lauritzen1996},
sufficient dimension reduction \parencite{li1991sir,cook1998},
missing-data mechanisms \parencite{rubin1976},
predictive fairness \parencite{Hardt2016Dec},
variable selection \parencite{Candes2018Jun},
and more.

This testing problem is frequently understood to be quite difficult,
both in practice and in theory.
To understand the difficulties, let us first set some terminology.
For a sample space $\cV$,
let $\psi_n : \cV^n \to [0, 1]$ denote a \emph{randomized test},
a measurable function giving the probability of rejecting the null hypothesis;\footnote{%
  Some sources, such as \textcite{shahPeters2020},
  instead define a test as a function $\phi_n : \cV^n \times [0, 1] \to \{0, 1\}$,
  with the last argument representing a source of randomness $U$.
  The two are equivalent for our purposes;
  we can convert between them as
  $\phi_n(\cdots, u) = \indic(u \le \psi_n(\cdots))$
  and $\psi_n(\cdots) = \int_0^1 \phi_n(\cdots, u) \ud u$.
}
a \emph{deterministic} test takes values only in $\{0, 1\}$.
The probability of rejecting given $n$ i.i.d.\ samples from
a probability measure $\mu$ is $\mu^n \psi_n$,
using the notation $\mu f = \int f(v) \ud \mu(v)$.
For a family of null distributions $\cP$,
the \emph{size} of $\psi_n$ is $\sup_{P \in \cP} P^n \psi_n$,
and $\psi_n$ has \emph{level $\alpha$} if its size is at most $\alpha$.
Let $\psi = (\psi_n)_{n \ge 1}$ be a sequence of tests.
We say
\begin{align*}
  \text{$\psi$ has \emph{finite-sample level $\alpha$}}
  &&\text{ if }&&
  \sup_{n \ge 1} \; \sup_{P \in \cP} \; P^n \psi_n &\le \alpha
  ,\\
  \text{$\psi$ has \emph{uniformly asymptotic level $\alpha$}}
  &&\text{ if }&&
  \limsup_{n \to \infty} \; \sup_{P \in \cP} \; P^n \psi_n &\le \alpha
  ,\\
  \text{$\psi$ has \emph{pointwise asymptotic level $\alpha$}}
  &&\text{ if }&&
  \sup_{P \in \cP} \; \limsup_{n \to \infty} \; P^n \psi_n &\le \alpha
.\end{align*}
Each condition implies those below.
The ideal situation is tests with finite-sample level;
this is sometimes possible, as for e.g.\ many permutation-based tests.
If we have only pointwise asymptotic level,
then for any $n$ there may still exist null distributions
such that, say, $P^n \psi_n > \alpha + 0.5$;
with uniformly asymptotic level,
there is some threshold after which this is no longer possible.
It is worth noting, however,
that if a test sequence $\psi_n$ with uniformly asymptotic level $\alpha>0$ exists,
the test sequence $\psi'_n = \psi_n \, \alpha / \max\left\{ \alpha, \sup_{m \ge n} \sup_{P \in \cP} P^m \psi_m \right\}$ has finite-sample level $\alpha$
and
asymptotically equivalent behaviour:
$\norm{\psi'_n - \psi_n}_\infty \to 0$ uniformly.

Let $\cQ$ be a set of alternatives,
for which we should reject the null hypothesis.
We say
\begin{align*}
  \text{$\psi$ is \emph{pointwise consistent}}
  &&\text{ if }&&
  \inf_{Q \in \cQ} \; \lim_{n \to \infty} \; Q^n \psi_n &= 1
  \\
  \text{$\psi$ is \emph{strongly consistent}}
  &&\text{ if }&&
  \forall \mu \in \cP \cup \cQ,\;\;
  \psi_n &\to \indic(\mu \in \cQ) \text{, $\mu^\infty$-almost surely}
.\end{align*}
Strong consistency implies both pointwise consistency
and pointwise asymptotic level $0$;
for deterministic tests, it is equivalent to stating that a test makes only finitely many mistakes
as it sees an infinite sequence of samples.
One can also define uniform notions of consistency,
but these will not be relevant to our study.

For unconditional independence,
strong results are available \citep{Gretton2010,Rindt2021Dec}.
The following version
is shown in \cref{sec:discrete-z} for completeness.
\getkeytheorem{thm:consistent-ind-test}

What, then, about conditional independence?

\subsection{Finite-sample or uniform level}
Some special cases are benign.

If $Z$ is discrete (concentrated on an at most countable set),
then we can simply split the data by values of $Z$
and run an unconditional independence test on each stratum.
This general idea is well-known \citep[Remark 4]{shahPeters2020},
and Theorem 11(b) of \citet{boekenEtAl2026} gives related guarantees for discrete conditioning spaces.
We prove the following version, allowing arbitrary discrete marginals on a Polish conditioning space, in \cref{sec:discrete-z}.
\getkeytheorem{thm:discrete-consistent}

Another commonly considered special case is the ``Model-X'' regime:
we assume that we know the distribution of $X \mid Z$.
In this case, pointwise-consistent test sequences with finite-sample level
are readily available;
a prominent strategy is to use a conditional randomization test
\parencites[Section 4]{Candes2018Jun}{mimic-and-classify}{berrettEtAl2020}{liuEtAl2022},
whose power is analysed by \textcite{katsevichRamdas2022};
this can be phrased as testing independence
between a data point
and an indicator of whether it was sampled from the true distribution,
or had its $X$ value conditionally resampled given $Z$.
An alternative when both conditionals are known --
which could be relaxed to one conditional with the statistic of \citet{pogodin:circe} --
is given by \citet[Proposition 4.1]{zheng:hardness}.

Alternatively, we can make some explicit assumption on the smoothness of the distribution,
such as that the distribution of $(X, Y) \mid Z$ does not change too quickly in $Z$.
Under various forms of such assumptions,
tests are known that control level uniformly over the assumed class, in some cases even in finite samples,
and are consistent against every fixed alternative in it
\citep{warren2021,neykovEtAl2021,kimEtAl2022,gyorfiEtAl2023,boekenEtAl2026}.
Their power guarantees are usually stated for alternatives separated from the null in total variation.
Any fixed alternative, however, must have some positive total variation distance
from the null class of conditionally independent laws.
\begin{proposition}[{\citealp[Theorem~1]{lauritzen2024}}]
\label{thm:tv-closed}
If $P_m$, $P$ are probability measures on a standard Borel\footnote{%
  \Citeauthor{lauritzen2024} does not actually require a standard Borel space;
that assumption guarantees
that conditional laws exist,
which we will use in
our definition of conditional independence just above \eqref{eq:ci-density}.
} product space $\cX \times \cY \times \cZ$
such that $X \indep_{P_m} Y\mid Z$ for each $m$,
and the total variation distance between $P_m$ and $P$ tends to zero as $m \to \infty$,
then $X \indep_P Y \mid Z$.
\end{proposition}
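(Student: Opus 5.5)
We prove that conditional independence is preserved under total-variation limits by going through densities with respect to a single dominating measure. Let $\nu = P + \sum_m 2^{-m} P_m$, a finite measure dominating every $P_m$ and $P$. Write $p_m = \ud P_m/\ud\nu$ and $p = \ud P/\ud\nu$. Convergence in total variation is then exactly $p_m \to p$ in $L^1(\nu)$. Passing to a subsequence, we may also assume $p_m \to p$ $\nu$-almost everywhere.

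The plan is to express conditional independence through an identity of measures that contains no division by conditional densities. Such an identity is stable under $L^1$ limits. The natural choice is the following. $X \indep_P Y \mid Z$ holds if and only if, for all bounded measurable $f$ on $\cX$, $g$ on $\cY$ and $h$ on $\cZ$,
\[
  \E_P[f(X) g(Y) h(Z)] = \E_P\bigl[\E_P[f(X)\mid Z]\,\E_P[g(Y)\mid Z]\,h(Z)\bigr].
\]
The conditional expectations are awkward under limits, so I would use the equivalent product form
\[
  \E_P[f(X)g(Y)h(Z)]\;\E_P[h'(Z)] \text{-type identities,}
\]
or, more cleanly, a kernel formulation. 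Let $\kappa_P(\cdot\mid z)$ be the regular conditional law of $(X,Y)$ given $Z$; it exists because the space is standard Borel. Conditional independence means $\kappa_P(\cdot\mid z)$ is a product measure for $P_Z$-almost every $z$. To avoid kernels entirely, I would instead use the characterization
\[
  \E\bigl[f(X)g(Y)h(Z)\bigr]\,\E\bigl[h(Z)\bigr]\ \text{for indicator } h=\indic_C \text{ replaced by}\ \E\bigl[\E[f(X)\mid Z]\,g(Y)\,h(Z)\bigr],
\]
so that the only task is to control $\E_{P_m}[f(X)\mid Z] \to \E_P[f(X)\mid Z]$.

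The key step is this convergence of conditional expectations. Set $a_m(z) = \E_{P_m}[f(X)\,\indic\{\cdot\}]$ in density form. The $Z$-marginal densities satisfy $q_m \to q$ in $L^1(\nu_Z)$, and the numerators $\int f\, p_m \ud\nu_{X\mid Z}$ converge in $L^1$ as well, by contraction of conditional expectation under $\nu$. Hence the ratio $\phi_m = \E_{P_m}[f\mid Z]$ converges to $\phi = \E_P[f\mid Z]$ in $P_Z$-measure on the set $\{q>0\}$, and it is uniformly bounded by $\|f\|_\infty$. Now apply conditional independence under $P_m$:
\[
  P_m\bigl(f\otimes g\otimes h\bigr) = P_m\bigl(\phi_m(Z)\,g(Y)\,h(Z)\bigr).
\]
The left side converges to $P(f\otimes g\otimes h)$ by total variation. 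For the right side, split it as $P_m(\phi_m g h) - P(\phi_m g h) + P(\phi_m g h)$. The first difference is at most $\|f\|\,\|g\|\,\|h\|\,\TV(P_m,P)\to 0$. The second term converges to $P(\phi g h)$ by bounded convergence in probability.

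Finally, the resulting identity $P(fgh) = P(\phi\, g h)$ for all bounded $f,g,h$ is the standard characterization of $X\indep_P Y\mid Z$: it says $\E_P[f(X)\mid Y,Z] = \E_P[f(X)\mid Z]$. Applying it to countably many indicators $f$ of a generating $\pi$-system, which is possible because the space is standard Borel, together with the regular conditional law gives the kernel factorization.

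The main obstacle is the convergence-in-measure step for $\phi_m$. The difficulty is the division by $q_m$ near $\{q=0\}$, but that set is $P$-null, so it should be harmless. Care is also needed to ensure the subsequence does not depend on $f$; for this I would argue via convergence in measure rather than almost everywhere.
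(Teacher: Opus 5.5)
The paper gives no argument of its own here; it only cites \citet[Theorem~1]{lauritzen2024}. Your proposal is therefore a self-contained replacement for that citation, and once the false starts are removed it is correct.

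The essential estimates hold. Let $\nu_Z$ be the $Z$-marginal of $\nu$, let $q_m=\ud(P_m)_Z/\ud\nu_Z$, and let $a_m$ be the $\nu_Z$-density of $C\mapsto\E_{P_m}[f(X)\indic_C(Z)]$. Testing against functions $h(z)$ with $\abs h\le1$ gives $\norm{q_m-q}_{L^1(\nu_Z)}\le\norm{p_m-p}_{L^1(\nu)}$ and $\norm{a_m-a}_{L^1(\nu_Z)}\le\norm{f}_\infty\norm{p_m-p}_{L^1(\nu)}$. Since $P\le\nu$, convergence in $\nu_Z$-measure transfers to $P_Z$-measure. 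Also $P_Z(q\le\delta)\to0$ as $\delta\to0$, which handles the region near $\{q=0\}$. Hence $a_m/q_m\to a/q$ in $P_Z$-probability. From there, passing to the limit in $P_m(fgh)=P_m(\phi_m gh)$, a $\pi$--$\lambda$ argument over rectangles in $\sigma(Y,Z)$, and countably many indicators give the kernel factorization exactly as you describe.

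Compared with the citation, your route has a real merit. Because $a_m$ and $q_m$ are Radon--Nikodym derivatives on $\cZ$, not integrals against a disintegration of $\nu$, everything up to $P(fgh)=P(\phi gh)$ works on arbitrary measurable spaces with the conditional-expectation definition of conditional independence. That is the generality the footnote attributes to Lauritzen. The standard Borel assumption enters only in the final translation to the paper's kernel-based definition.

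When writing it up, make three changes:
\begin{itemize}
\item Delete the middle paragraph containing the ``$\E_P[h'(Z)]$-type identities'' and ``indicator $h$ replaced by'' sentences; these are not usable statements.
\item Drop the almost-everywhere subsequence, which is never used.
\item More substantively, fix from the outset the version $\phi_m=a_m/q_m$ on $\{q_m>0\}$ and $\phi_m=0$ elsewhere, truncated to $[-\norm f_\infty,\norm f_\infty]$ if you like.
\end{itemize}
The last point matters because $P(\phi_m gh)$ depends on the values of $\phi_m$ on $P_m$-null sets, which need not be $P$-null. Both the bound $\abs{P_m(\phi_m gh)-P(\phi_m gh)}\le2\norm f_\infty\norm g_\infty\norm h_\infty\norm{P_m-P}_{\TV}$ and the convergence in probability must refer to this single bounded function. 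They cannot refer to an arbitrary version of $\E_{P_m}[f(X)\mid Z]$.
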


Under each of these assumptions,
we have positive results.
Some such assumption, however, is necessary.
\Textcite{shahPeters2020}
proved that if we only assume
a Lebesgue joint density on $(X, Y, Z)$, %
the situation is much worse:
a test with uniformly asymptotic level has only trivial power against \emph{any} alternative.
\begin{theorem}[{\citealp[Corollary 3]{shahPeters2020}}] \label{thm:sp}
  Let $d_X, d_Y, d_Z \ge 1$,
  and $\cV = \cX \times \cY \times \cZ = \R^{d_X + d_Y + d_Z}$.
  Picking any $M \in (0, \infty]$,
  let $\cE_M$ be the set of probability measures on $\cV$
  absolutely continuous with respect to Lebesgue measure
  and whose support is in $(-M, M)^{d_X + d_Y + d_Z}$.
  Take $\cP_M \subset \cE_M$ to be the distributions where $X \indep Y \mid Z$.
  If $(\psi_n)$ is a family of tests
  with uniformly asymptotic level $\alpha \in [0, 1)$,
  then $\sup_{Q \in \cE_M \setminus \cP_M} \limsup_{n \to \infty} Q^n \psi_n \le \alpha$;
  thus $(\psi_n)$ is not pointwise consistent.
\end{theorem}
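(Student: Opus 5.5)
Our plan follows the strategy of \textcite{shahPeters2020}: show that every alternative $Q$ is hidden inside the null class in a way that no $n$-sample test can detect. Fix $Q \in \cE_M \setminus \cP_M$, $\eps > 0$ and $n$. The key construction is a null distribution $P \in \cP_M$ with $\TV(P^n, Q^n) \le \eps$. Granting this, $Q^n \psi_n \le P^n \psi_n + \eps \le \sup_{P' \in \cP_M} P'^n \psi_n + \eps$. Taking $\limsup_n$ and using uniformly asymptotic level gives $\limsup_n Q^n \psi_n \le \alpha + \eps$. Since $\eps$ is arbitrary, the bound follows. In particular $\limsup_n Q^n\psi_n \le \alpha < 1$, so the tests are not pointwise consistent.

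Note that the null distribution must depend on $n$. By \cref{thm:tv-closed} we cannot make $\TV(P,Q)$ itself small; only the $n$-sample laws can be close. The idea is to encode $X$ (or its dependence on $Y$) into infinitesimally fine digits of $Z$. Concretely, I would first approximate $Q$ in total variation by a distribution with a bounded, compactly supported density that is piecewise constant on a fine grid. This changes $Q^n$ by at most $n\,\TV$, which is small for fixed $n$. Then, for a large integer $K$, I would construct $\tilde Q$ by replacing $Z$ with $\tilde Z = Z + \delta(X,Y)$. Here $\delta$ is a tiny perturbation that records the pair $(X,Y)$ in high-frequency digits of a coordinate of $Z$. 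One way is to partition each small $Z$-cell into $K$ slivers, with $(X,Y)$ determining, through a measurable bijection, a position within the sliver. Given $\tilde Z$, the pair $(X,Y)$ is then almost deterministic, so $X \indep Y \mid \tilde Z$ holds approximately. An exact version is obtained by letting $Z$ determine $X$ up to independent noise: set $\tilde Z = (Z, \text{fine code of } X)$ folded into $\R^{d_Z}$ via interleaving. With $\tilde Z$ determining $X$, we have $X \indep Y \mid \tilde Z$ trivially. The resulting law $P$ has a density, since adding a small uniform jitter keeps the coding recoverable up to cells, and the support stays in $(-M,M)^{d}$.

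The main obstacle is showing $\TV(P^n, Q^n)\le\eps$, because the coding perturbs $Z$ only at scale $1/K$ and the sample size is fixed. I would handle it by a coupling argument: draw $n$ samples from $Q$, then generate the fine digits. With high probability no two samples fall into the same coarse $Z$-cell, and if the code is chosen uniformly at random the fine digits look uniform. I would therefore average over a random coding map $\sigma$ (a random bijection of cell labels), which is the Shah--Peters device. The mixture over $\sigma$ of the coded $n$-sample law is then close to $Q^n$ convolved with uniform jitter, while each individual coded law is conditionally independent. The remaining issue is that a mixture of CI laws need not itself be CI. To handle this, I would make the randomness part of a single $P$: the coding is a function of one extra latent variable that is also encoded in $Z$ (for instance, $\sigma$ is itself embedded in the coarse digits). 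Alternatively, I would note that the bound $Q^n\psi_n \le \E_\sigma P_\sigma^n\psi_n + \eps \le \sup_{P}P^n\psi_n + \eps$ already suffices, which makes the mixture issue moot. This averaging step, together with verifying that the coded laws have Lebesgue densities with the required support, is where I expect the real work to lie.
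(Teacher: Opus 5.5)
Your reduction from a per-$n$ domination bound to the asymptotic statement is fine. The gap is the domination step itself: your headline construction cannot exist. Restricting to the first sample gives $\norm{P^n-Q^n}_{\TV}\ge\norm{P-Q}_{\TV}$ for every $n$. By \cref{thm:tv-closed}, $d_Q=\inf_{P\in\cP_M}\norm{P-Q}_{\TV}$ is strictly positive for each fixed $Q\in\cE_M\setminus\cP_M$. So for no $n$ does any null $P$ satisfy $\norm{P^n-Q^n}_{\TV}\le\eps<d_Q$. Your remark that ``only the $n$-sample laws can be close'' therefore contradicts the very proposition you cite. Your first repair, embedding $\sigma$ in the coarse digits of $Z$ to obtain one conditionally independent law, fails for the same reason. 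This is why \citet{shahPeters2020} do not argue in total variation. Coding $X$ into fine digits of $Z$ only makes null and alternative observations close in \emph{Euclidean} distance. They turn that into closeness of rejection probabilities by approximating the arbitrary measurable rejection region with finitely many boxes, so their null depends on the test. Your proposal has no substitute for that step.

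What does work is your last sentence, and it should be the whole argument rather than an aside. Draw the code once, shared by all $n$ samples, and bound $\norm{\E_\sigma P_\sigma^n-Q^n}_{\TV}$. A mixture of product measures is not a product, so the obstruction above disappears, and $Q^n\psi_n\le\E_\sigma P_\sigma^n\psi_n+\eps\le\sup_{P\in\cP_M}P^n\psi_n+\eps$ finishes as you say. But you have not shown this bound, and it requires the code to be drawn independently in each fine $Z$-cell. A single global bijection gives identical fine digits to samples in different $Z$-cells whose coded cells coincide, a dependence absent under $Q^n$.

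A clean version needs no digit-coding or Euclidean closeness at all:
\begin{itemize}
\item Replace $Q$ by $Q'$ whose density is piecewise constant on a finite grid $\{A_a\times B_b\times C_c\}$ of boxes inside $(-M,M)^s$, at cost $n\norm{Q-Q'}_{\TV}$.
\item Refine the $z$-grid into cells $D$ with $\max_D Q'_Z(D)$ small.
\item Cut each $D$ along $z^{(1)}$ into slivers of widths proportional to $\pi_D(a,b)=Q'(X\in A_a,Y\in B_b\mid Z\in D)$, cyclically shifted by an independent uniform $u_D$.
\item Let $P_u$ keep the $Z$-marginal of $Q'$, but make $(X,Y)$ uniform on $A_a\times B_b$ when $z$ lies in the sliver labelled $(a,b)$.
\end{itemize}
Each $P_u$ is absolutely continuous and conditionally independent, and for every $z$ the $u_D$-average of its conditional law equals that of $Q'$. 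By independence of the $u_D$, $\E_uP_u^n$ coincides with $Q'^n$ on the event that $Z_1,\dots,Z_n$ lie in distinct cells. Hence $\norm{\E_uP_u^n-Q'^n}_{\TV}\le\binom n2\max_DQ'_Z(D)$.

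Completed this way, your route genuinely differs from Shah--Peters. It is test-independent: it places $Q^n$ in the total-variation closure of the convex hull of $\{P^n:P\in\cP_M\}$. By Hahn--Banach in $L^1$, this is equivalent to \cref{thm:sp-domination} holding for all tests at once, and it avoids any approximation of the rejection region.
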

Subsequent hardness results
\parencite[e.g.][]{neykovEtAl2021,kimEtAl2022,lundborgEtAl2022}
are of roughly the same form.

\subsection{Pointwise level}
In most widely-used hypothesis testing settings, however,
tests claim only pointwise asymptotic level.
Take, for instance, the problem of testing whether a distribution has zero mean.
A textbook calculation shows that,
among the distributions with finite variance,
the classical $t$-test
has pointwise asymptotic level $\alpha$ and is pointwise consistent.
Yet \citet{Bahadur1956Dec}
proved that, in that same class of distributions (or various more restricted ones),
any test sequence with uniformly asymptotic level $\alpha$
also satisfies $\limsup_{n \to \infty} Q^n \psi_n \le \alpha$
for every $Q$ with nonzero mean.

It is natural to ask, then:
is testing conditional independence similar to testing for a zero mean?
There are no pointwise-consistent tests with uniformly asymptotic level,
but might there be a pointwise-consistent test with pointwise asymptotic level?

\Citet[Corollary 1]{gyorfiWalk2012} claimed such a test,
specifically a strongly consistent one
-- but their proof was incorrect
\citep[Section 1.1]{neykovEtAl2021}.
The result was stated pointwise,
but the correction (with an added smoothness assumption)
by \citet[Section 5]{gyorfiEtAl2023}
reveals the incorrect proof would have actually implied a strongly consistent test with uniformly asymptotic level, contradicting \cref{thm:sp}.
\Citet{gyorfiEtAl2023} conclude:\footnote{While they frame the problem in terms of testing whether a data transformation is lossless, as they state in their Section 2.2, this is a question of conditional independence; see \cref{sec:gyorfi-walk} for details.}
\begin{quote}
It is an open research
problem whether a strong universal test exists, i.e., a test that is strongly consistent without
any condition [...] on the underlying distribution.
\end{quote}

Proposition 5 of \citet{kci}
would also imply a pointwise-consistent test of conditional independence with pointwise level.
Unfortunately, their proof sketch
is also incorrect.\footnote{The sketch uses a property that holds for the correct conditional kernel mean embeddings, but applies it to the estimated CMEs. The regularization in the regression is also important, but they only state that it is ``small.''}
Asked about this step in personal communication,
one of the authors replied:
\begin{quote}
We also realized later that there is a gap and that it is indeed unclear how to turn the ``proof sketch'' into a proper proof. Any suggestions are welcome!
\end{quote}

\Citet{boekenEtAl2026} study
the possibility of pointwise-consistent tests of conditional independence
using topological criteria.
Their main result is for \emph{finite-precision} tests:
they show that no such test can consistently distinguish the two hypotheses
for all Borel laws on Polish spaces with perfect $\cZ$.
The finite-precision condition is very natural for practical implementation,
but is perhaps not so natural for conditional independence:
the construction of \citet{shahPeters2020}
demonstrates that conditional independence can always be ``hidden'' in lower-order bits of $Z$ than the finite-precision measurements see.
They also note:
\begin{quote}
It remains an open question
whether there exists a consistent [finite-precision] test for conditional independence
if one merely assumes that the distribution has a density.
\end{quote}

We settle all three of these open questions in the negative,
through the following result.

\begin{theorem} \label{thm:main-intro}
  In the setting of \cref{thm:sp},
  suppose $(\psi_n)$ is some family of tests
  with pointwise asymptotic level $\alpha \in [0, 1)$.
  Then $\inf_{Q \in \cE_{M} \setminus \cP_M} \limsup_{n \to \infty} Q^n \psi_n \le \alpha$;
  in particular, $(\psi_n)$ is not pointwise consistent.
\end{theorem}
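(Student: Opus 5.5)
The plan is a diagonal construction. We build a single alternative $Q$ together with null laws $P_1, P_2, \dots \in \cP_M$ and sample-size windows $[N_k, N_{k+1})$ such that, for every $n$ in the $k$-th window,
\[
  \TV\bigl(Q^n, P_k^n\bigr) \le \eps/2
  \qquad\text{and}\qquad
  P_k^n \psi_n \le \alpha + \eps/2 .
\]
Together these give $Q^n\psi_n \le \alpha+\eps$ for all $n \ge N_1$, hence $\limsup_n Q^n\psi_n \le \alpha+\eps$. Since $\eps>0$ is arbitrary, this yields the infimum statement of \cref{thm:main-intro}.

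The obvious attempt fails, and this shapes the construction. We cannot take $P_k \to Q$ in total variation: by \cref{thm:tv-closed}, such a $Q$ would itself be conditionally independent. The approximation must therefore hold at the level of $n$-fold products only, and only for $n$ in a bounded window. For this I would use the mechanism behind \cref{thm:sp}. Dependence between $X$ and $Y$ that lives only in very fine-scale structure of $Z$, say at cells of side $h$, is invisible to $n$ samples once $n^2 h^{d_Z} \ll 1$. In that regime, with high probability no two samples share a fine cell. Moreover, conditionally on the coarse cells occupied, the sample looks the same whether or not the fine structure couples $X$ and $Y$. Conversely, a null law can be made to look dependent to $n$ samples by the Shah--Peters trick: it encodes the $(X,Y)$ dependence in low-order bits of $Z$. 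This makes it conditionally independent given the full $Z$, while its $n$-fold product is within any prescribed TV of a given dependent law.

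Concretely, I would proceed as follows.
\begin{enumerate}[label=(\arabic*)]
\item Start from $P_0$, uniform on the cube, and a fixed smooth dependent perturbation profile.
\item Define $Q = P_0 + \sum_k a_k D_k$. Here $D_k$ is a signed perturbation with bounded density that creates $X$--$Y$ dependence supported at $Z$-scale $h_k$. The amplitudes $a_k>0$ decrease fast enough that $\sum_k a_k D_k$ keeps $Q$ a positive density, yet no cancellation occurs, so $Q \notin \cP_M$. Already $D_1 \ne 0$ with disjoint scales should suffice.
\item For each $k$, let $P_k$ be the null obtained from $Q$ by two operations. First, drop the terms $j>k$; this costs TV at most $\sum_{j>k} a_j$ per sample, so at most $n\sum_{j>k}a_j$ on $n$-fold products. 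Second, replace the finitely many dependent terms $j \le k$ by a Shah--Peters null that mimics them to within $\eps/8$ on $n$-fold products for all $n \le N_{k+1}$.
\item Choose the parameters in the order $P_k \to N_{k+1} \to h_{k+1}, a_{k+1}$. The integer $N_{k+1}$ comes from pointwise level of $P_k$, and must also exceed the point after which $P_k^n\psi_n \le \alpha+\eps/2$. Then take $a_{k+1}$ so small that $N_{k+1}\cdot 2\sum_{j>k}a_j \le \eps/8$.
\end{enumerate}

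The main obstacle is the circularity in step (3)--(4). The null $P_k$ must be fixed before we learn the threshold $N_{k+1}$ at which its rejection rate settles. Yet $P_k$'s mimicry of $Q$ needs to be accurate up to that same $N_{k+1}$. The Shah--Peters approximation degrades as $n$ grows, since larger samples eventually see the hiding bits.

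To break this, I would let $P_k$ coincide with $P_{k-1}$ except on the new scale. I would also aim to show that a suitably chosen Shah--Peters hiding of the scales $j\le k$ mimics them for all $n$ up to a bound that can be pushed arbitrarily far by refining the hiding resolution. This refinement is done only after $N_{k+1}$ is known, while the level of $P_k$ must be insensitive to it. That insensitivity is the delicate point. I would handle it by making the window for $P_k$ end at $N_{k+1}$, where the first bullet's requirement on $P_k$ is only needed up to $N_{k+1}$, while its pointwise level is used only to start the window. Verifying that these two requirements on $P_k$ can be met simultaneously, so that its level-threshold is not pushed beyond the window by the refinement, is where I expect the real work to lie.
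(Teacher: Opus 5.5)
There is a genuine gap, and it sits in the first requirement of your plan. The windows $[N_k,N_{k+1})$ tile $[N_1,\infty)$, so you are asking, for every large $n$, for a null $P$ with $\norm{Q^n-P^n}_{\TV}\le\eps/2$, where $Q$ is one fixed alternative. That cannot happen. By \cref{thm:tv-closed}, $\delta:=\inf_{P\in\cP_M}\norm{P-Q}_{\TV}>0$. For any null $P$ (with densities $p$, $q$), the Hellinger affinity tensorizes:
\[
  1-\norm{P^n-Q^n}_{\TV}\le\Bigl(\int\sqrt{pq}\Bigr)^n\le(1-\delta^2)^{n/2}.
\]
So no null comes within $\eps/2$ of $Q^n$ for all sufficiently large $n$ (roughly $n\gtrsim\eps/\delta^2$). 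Your heuristic that fine-scale dependence is ``invisible'' to $n$ samples when $n^2h^{d_Z}\ll1$ is false in total variation, since each observation reveals $Z$ exactly. Already in \cref{thm:oscillation}, $\norm{P^\eta_m-Q^\eta}_{\TV}=\eta/2$ for every $m$. What the Shah--Peters construction and \cref{thm:tensorization} actually give is test-specific: $P^n\psi_n\approx Q^n\psi_n$ for a fixed measurable test at a fixed $n$ (or finitely many), not uniformly in $n$.

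Replacing TV by this test-specific closeness does not rescue the scheme, because the circularity you flag is fatal rather than technical. Refining the hiding in $P_k$ drives $P_k^n\psi_n\to Q_k^n\psi_n$ for each $n$, where $Q_k$ is the truncated alternative. So whenever $Q_k^n\psi_n>\alpha+\eps/2$ at some $n$, the level threshold of the refined null is pushed past that $n$. Pointwise level gives no modulus for these thresholds. Keeping them inside the windows therefore presupposes that $Q$ already has small rejection rates at all large $n$, which is the conclusion. The index mismatch in your step (4), where $N_{k+1}$ is $P_k$'s level threshold but $P_k$'s level is needed on $[N_k,N_{k+1})$, is a symptom of this.

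The missing idea is to reverse the roles, as in \cref{thm:diagonal}. Construct a single \emph{null} $P_*$ with $\limsup_nP_*^n\psi_n\ge\rho:=\inf_{Q\in\cQ_M}\limsup_nQ^n\psi_n$, so that pointwise level forces $\rho\le\alpha$. At stage $k$, the alternative $\widetilde Q_k$ consists of the null components fixed so far plus a dependent component on a fresh $Z$-slab. It supplies one sample size $n_k$ with $\widetilde Q_k^{n_k}\psi_{n_k}\ge\rho-\lambda_k$. An oscillating family then swaps the dependent component for a null one while nearly preserving the rejection probability at that single $n_k$. All later slabs receive total weight at most $\lambda_k/n_k$, so \eqref{eq:rej-tv} protects $n_k$, and countable gluing over disjoint $Z$-slabs (\cref{thm:gluing}) keeps the limit null. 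That limit object needs control only along a subsequence, and it is a null, which is what survives the countable limit. Your limit object is an alternative that needs control at every large $n$.
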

This result is a special case of \cref{thm:main},
which shows this holds even if all the distributions are additionally required
to have densities bounded by a common constant.
\Cref{thm:regular} strengthens it further:
the level requirement need only hold over conditionally independent laws
with a continuous density and a uniformly continuous conditional law,
and the alternatives may be restricted to $C^\infty$ densities.

The proof operates roughly by constructing a sequence of $P_m \in \cP$
and a conditionally dependent $Q \in \cQ$
such that $P_m^n \to Q^n$ weak-$*$ for each fixed $n$,
and so a fixed $\psi_n$ behaves nearly identically on $P_m^n$ and $Q^n$.
We then build distributions by gluing together diluted copies of these distributions with appropriately budgeted weights.
The distribution $P_*$ which uses $P_m$ at each level
is treated nearly identically to one which uses $Q$ in a single level;
since we must reject with probability at most $\alpha$ on $P_*$,
for every $\varepsilon>0$ there must be such an alternative whose limiting upper rejection rate is at most $\alpha+\varepsilon$.

Compared to \cref{thm:sp},
when switching from uniformly asymptotic level
to pointwise asymptotic level,
\cref{thm:main-intro}'s conclusion is weaker:
it only rules out a positive lower bound on the asymptotic power improvement over $\alpha$.
This is not a weakness in the analysis:
there exist tests with pointwise asymptotic level
powerful against many alternatives,
though not uniformly.
In general, such pointwise tests need only an estimator converging in probability.

\begin{lemma} \label{thm:test-functional}
  Let $\Delta : \cM \to [0, 1]$
  with estimators $\hat\Delta_n \in [0, 1]$
  such that $\hat\Delta_n \to \Delta(\mu)$ in $\mu^n$-probability
  for each $\mu \in \cM$.
  Then for any $\alpha, \delta \in (0, 1)$,
  there is a randomized test sequence $(\psi_n)$
  for $\cP = \{ \mu \in \cM : \Delta(\mu) = 0 \}$
  against $\cQ = \{ \mu \in \cM : \Delta(\mu) > 0 \}$
  with
  \[
    \lim_{n \to \infty} \mu^n \psi_n
    = \alpha + (1-\alpha) \min\{ 1, \Delta(\mu) / \delta \}
    \quad\text{for each } \mu \in \cM
  .\]
  Thus $(\psi_n)$
  has pointwise asymptotic level $\alpha$,
  limiting power strictly above $\alpha$ whenever $\Delta(\mu) > 0$,
  and limiting power one whenever $\Delta(\mu) \ge \delta$.
\end{lemma}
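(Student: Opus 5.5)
The plan is to take the randomized test to be an explicit continuous, bounded function of the estimator:
\[
  \psi_n = g(\hat\Delta_n),
  \qquad
  g(t) = \alpha + (1-\alpha)\min\{1, t/\delta\}
  \quad\text{for } t \in [0,1].
\]
This $g$ maps $[0,1]$ into $[\alpha,1] \subseteq [0,1]$. Measurability of $\psi_n$ follows from measurability of $\hat\Delta_n$ (implicit in speaking of convergence in probability) and continuity of $g$. No sample splitting, thresholding or calibration is needed. The randomization lets the test reject with probability exactly $\alpha$ even when the statistic is uninformative, so the null limit is $\alpha$ rather than $0$.

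The key step is to show $\mu^n \psi_n \to g(\Delta(\mu))$ for each $\mu \in \cM$. Two facts give this.
\begin{itemize}
  \item The function $g$ is Lipschitz with constant $(1-\alpha)/\delta$.
  \item Both $\hat\Delta_n$ and $\Delta(\mu)$ lie in $[0,1]$, so $\abs{g(\hat\Delta_n) - g(\Delta(\mu))} \le 1-\alpha$.
\end{itemize}
For any $\eta > 0$ this yields
\[
  \abs[\big]{\mu^n \psi_n - g(\Delta(\mu))}
  \le \mu^n \abs[\big]{g(\hat\Delta_n) - g(\Delta(\mu))}
  \le \frac{(1-\alpha)\eta}{\delta}
     + (1-\alpha)\, \mu^n\bigl(\abs{\hat\Delta_n - \Delta(\mu)} > \eta\bigr).
\]
The last probability tends to zero by assumption, and $\eta$ is arbitrary. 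This is just the bounded convergence theorem in its convergence-in-probability form.

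The remaining claims then read off from the formula.
\begin{itemize}
  \item On $\cP$, where $\Delta(\mu) = 0$, the limit is $g(0) = \alpha$, so $\sup_{P \in \cP} \limsup_n P^n \psi_n = \alpha$. This is pointwise asymptotic level $\alpha$.
  \item On $\cQ$, where $\Delta(\mu) > 0$, we have $\min\{1, \Delta/\delta\} > 0$, so the limit is strictly above $\alpha$. This uses $\alpha < 1$ and $\delta > 0$.
  \item When $\Delta(\mu) \ge \delta$, the limit is $\alpha + (1-\alpha) = 1$.
\end{itemize}

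There is no real obstacle here. The only points needing care are checking that $g$ takes values in $[0,1]$, and noting that the limit exists (not merely a limsup), which the Lipschitz argument gives directly.
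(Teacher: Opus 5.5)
Your proposal is correct and matches the paper's proof: the paper uses the same test $\psi_n = \alpha + (1-\alpha)\min\{1, \hat\Delta_n/\delta\}$ and gets the limit from continuity plus bounded convergence. Your Lipschitz-and-tail estimate just writes that bounded-convergence step out explicitly.
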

\begin{proof}
  Let $\psi_n = \alpha + (1 - \alpha) \min\{ 1, \hat\Delta_n / \delta \}$.
  The limit follows from the continuity of
  $t \mapsto \min \{1, t / \delta \}$
  and bounded convergence;
  the following implications are immediate.
\end{proof}
If a deterministic test sequence is desired,
this can also be achieved for classes of distributions with a shared atomless one-dimensional projection:
hold out $k_n$ points, with $k_n \to \infty$ and $k_n / n \to 0$,
and use the rank of the first data point's projection among the $k_n-1$ other points
as a source of auxiliary randomization.
This changes each rejection probability by at most $1/(k_n+1)$.

\begin{proposition} \label{thm:ci-functional}
  Let $\cX, \cY, \cZ$ be standard Borel spaces,
  and $\cM$ the set of probability measures on $\cX \times \cY \times \cZ$.
  There exist $\Delta : \cM \to [0, 1]$
  with $\Delta(\mu) = 0$ iff $X \indep Y \mid Z$ under $\mu$,
  as well as estimators $\hat\Delta_n \in [0, 1]$
  such that $\hat\Delta_n \to \Delta(\mu)$ in $\mu^n$-probability
  for each $\mu \in \cM$.
\end{proposition}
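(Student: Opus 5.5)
Since $\cX,\cY,\cZ$ are standard Borel, each is Borel-isomorphic to a Borel subset of $[0,1]$. Conditional independence and i.i.d.\ sampling are preserved by bimeasurable bijections, so I will assume $\cX=\cY=\cZ=[0,1]$ and push every measure forward.

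\textbf{The functional.} Let $\cB_k$ be the partition of $[0,1]$ into dyadic intervals of length $2^{-k}$. Write $\cF_k=\sigma(\cB_k)$ and $Z_k=\lfloor 2^k Z\rfloor$. I will use the countable family of indicator pairs $f=\indic_A(X)$, $g=\indic_B(Y)$ with $A,B$ dyadic intervals, enumerated as $(f_j,g_j)_{j\ge1}$. Define
\[
  \Delta(\mu)=\sum_{j\ge1}2^{-j}\,\E_\mu\bigl[\,\bigl|\E_\mu[f_jg_j\mid Z]-\E_\mu[f_j\mid Z]\,\E_\mu[g_j\mid Z]\bigr|\,\bigr]\in[0,1].
\]
\begin{itemize}
\item If $X\indep Y\mid Z$, every summand vanishes, so $\Delta(\mu)=0$.
\item Conversely, suppose $\Delta(\mu)=0$. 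Then for all dyadic $A,B$ we have $\mu(X\in A,Y\in B\mid Z)=\mu(X\in A\mid Z)\,\mu(Y\in B\mid Z)$ a.s. This holds simultaneously for all pairs, since there are countably many. Dyadic rectangles form a $\pi$-system generating the Borel sets, and regular conditional laws exist. Hence the conditional law of $(X,Y)$ given $Z$ is a product a.s., i.e.\ $X\indep Y\mid Z$.
\end{itemize}

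\textbf{The estimator.} Let $D_k(\mu)$ be the same series with conditioning on $Z_k$ in place of $Z$, truncated to $j\le k$. By martingale convergence (Lévy's upward theorem), $\E[h\mid Z_k]\to\E[h\mid Z]$ in $L^1$ for each bounded $h$, since $\bigvee_k\cF_k$ is the Borel $\sigma$-algebra. All terms are bounded by 1 and products of bounded $L^1$-convergent sequences converge in $L^1$, so $D_k(\mu)\to\Delta(\mu)$ for every $\mu$.

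Each $D_k(\mu)$ is a fixed continuous function of the finitely many cell probabilities of $(X\text{-cell},Y\text{-cell},Z_k)$ at resolution $k$. For a bin $b$ with $\mu(Z_k=b)=0$ the term is zero. The term $\mu(b)\,|\cdot|$ is continuous in the cell probabilities even when $\mu(b)\to0$, because the conditional ratios are bounded by 1.

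Let $\hat D_{k,n}$ be the plug-in version using empirical cell frequencies. For fixed $k$ there are finitely many cells, so the weak law of large numbers and continuity give $\hat D_{k,n}\to D_k(\mu)$ in probability. Set $\hat\Delta_n=\hat D_{k_n,n}$ with $k_n\to\infty$ chosen by a diagonal argument.

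\textbf{Main obstacle.} The delicate step is choosing $k_n$ without knowledge of $\mu$. I plan to use a uniform bound. $D_k$ is Lipschitz with respect to the $\ell^1$ distance on the cell-probability vector, with a constant $C$ independent of $\mu$; I will check this using $|p_{ab}/p_b-q_{ab}/q_b|\,p_b\lesssim|p_{ab}-q_{ab}|+|p_b-q_b|$. The expected $\ell^1$ error of an empirical distribution on $N_k=2^{3k}$ cells is at most $\sqrt{N_k/n}$. Hence
\[
  \E\,|\hat D_{k,n}-D_k(\mu)|\le C\sqrt{2^{3k}/n}
\]
uniformly in $\mu$. Choosing $k_n=\lfloor\log_2 n/6\rfloor$ makes this error vanish. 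Combined with $D_{k_n}(\mu)\to\Delta(\mu)$, this gives $\hat\Delta_n\to\Delta(\mu)$ in $\mu^n$-probability for each $\mu$.

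The estimator is clipped to $[0,1]$ automatically. The output is then transported back through the Borel isomorphisms.
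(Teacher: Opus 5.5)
Your argument is correct, and it reaches the result by a somewhat different route than the paper. Your functional has the same form as the paper's: a weighted sum of $L^1$ conditional covariances of indicators over a countable generating $\pi$-system, here dyadic rectangles. Your estimator is also close to the paper's, being a histogram regression in $Z$ with bin width $2^{-k_n}\approx n^{-1/6}$, which meets the paper's conditions $h_n\to0$ and $nh_n\to\infty$.

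The difference is in how consistency is proved. The paper embeds only $\cZ$ into $\R$ and splits the sample. It fits partitioning regressions of the indicators on one half, invokes the universal $L^2(\mu_Z)$-consistency of partitioning estimates from Gy\"orfi et al., and averages on the held-out half. You instead plug the empirical measure into the discretized functional $D_k$ and split the error into two parts:
\begin{itemize}
\item a deterministic bias $D_k(\mu)-\Delta(\mu)\to0$, from L\'evy's upward theorem (essentially the proof of universal consistency for nested dyadic partitions);
\item a stochastic term bounded by $4\sqrt{N_k/n}$ uniformly in $\mu$, via a Lipschitz estimate and $\E\norm{\hat p-p}_1\le\sqrt{N_k/n}$.
\end{itemize}

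Your version is self-contained and needs no sample splitting. It also puts all non-uniformity in $\mu$ into the bias term, consistent with the paper's remark that no uniformly consistent estimator can exist. The paper's version is modular: it accepts any universally consistent regression method, such as the kernel estimators it mentions, and never re-embeds $\cX$ or $\cY$.

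Two details should be made explicit.
\begin{itemize}
\item The count $N_k=2^{3k}$ requires enumerating the pairs $(A_j,B_j)$ so that those with $j\le k$ are unions of level-$k$ cells, for instance by dyadic level. Otherwise, just let $k_n$ grow more slowly.
\item Each term involves a product of two conditional probabilities, so the Lipschitz bound you actually need is $\abs{xy/w-x'y'/w'}\le\abs{x-x'}+\abs{y-y'}+\abs{w-w'}$ for $0\le x,y\le w$ and $0\le x',y'\le w'$, reading the quotient as $0$ when $w=0$. This holds and gives $C=4$.
\end{itemize}
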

Thus, there exist tests for conditional independence
with pointwise asymptotic level
that are pointwise consistent for any $\Delta$-separated distributions;
however, broad classes of distributions have $\inf_{Q \in \cQ} \Delta(Q) = 0$.
Replacing the fixed threshold $\delta$ in \cref{thm:test-functional} by any positive sequence $\delta_n\searrow0$
gives a test consistent at every alternative, since $\hat\Delta_n\to\Delta(Q)>0$ in probability.
But for absolutely continuous distributions on Euclidean spaces,
\cref{thm:single-null} then supplies a single null distribution $P_*$
with $\limsup_n P_*^n\psi_n=1$: %
shrinking the threshold breaks pointwise level control.

\emph{Paper outline.}
We first prove \cref{thm:ci-functional} in \cref{sec:functional-proof},
as it is independent of the rest and can be dispensed with quickly.
\Cref{sec:setup,sec:oscillation,sec:ingredients,sec:main-proof}
then give a self-contained proof of \cref{thm:main},
which generalizes \cref{thm:main-intro}.
\Cref{sec:regular} proves \cref{thm:regular},
which strengthens \cref{thm:main} by using more regular densities.

To situate our results in the literature,
we also provide two alternative proofs of \cref{thm:main-intro}.
In \cref{sec:black-box},
we use the results of \citet{shahPeters2020} as a black box
inside our diagonalization argument.
In \cref{sec:baire},
we replace the explicit diagonalization
with a topological argument following \citet{boekenEtAl2026}:
inside a compact family of laws with a common density bound,
conditionally independent laws are Baire-generic,
and a generic null attains at least the infimum asymptotic rejection rate over the alternatives.

\Cref{sec:discrete-z} proves the positive results for discrete conditioning variables; \cref{sec:gyorfi-walk} applies the impossibility theorem to lossless feature selection as in \citet{gyorfiEtAl2023}.

\section{Proof of the positive result} \label{sec:functional-proof}
We begin by proving \cref{thm:ci-functional}.
There are many such functionals $\Delta$ in common use;
the bigger challenge is an estimator $\hat\Delta_n$
with universal convergence in probability.
(Note that no estimator with \emph{uniform} convergence over Euclidean densities can exist,
as that would yield a test with uniformly asymptotic level and contradict \cref{thm:sp}.)

We use a simple, self-contained partition-based estimate,
similar in spirit to that of \citet{gyorfiWalk2012,gyorfiEtAl2023}.
A more practical approach with equivalent guarantees could be
e.g.\ the KCI functional of \citet{kci}
(also see \citealp{zheng:hardness})
with the universally consistent conditional mean estimator of
\citet{TamasBalazs2024}.

\begin{proof}[Proof of \cref{thm:ci-functional}]
\emph{The functional.}
Let $(\cA_j)_{j\geq1}$ and $(\cB_k)_{k\geq1}$ be countable $\pi$-systems containing $\cX$ and $\cY$, respectively, and generating their Borel $\sigma$-algebras. For instance, we can use countable bases closed under finite intersections, with the whole spaces adjoined.
Take $\mu_{X \mid Z}, \mu_{Y \mid Z}, \mu_{XY\mid Z}$
to be regular conditional laws;
these exist because $\cX, \cY, \cX \times \cY$ are standard Borel.
Let $w_{jk}>0$ with $\sum_{jk} w_{jk}=1$,
and set
\begin{align*}
  d_{jk}(\mu)
  = \E_{Z \sim \mu_Z} \abs[\big]{
    \mu_{XY\mid Z}(\cA_j, \cB_k) - \mu_{X\mid Z}(\cA_j) \mu_{Y\mid Z}(\cB_k)
  }
  ,
  \quad
  \Delta(\mu)&=\sum_{j,k} w_{jk} \,d_{jk}(\mu)
.\end{align*}
Conditional independence under $P$ gives $d_{jk}(P) = 0$, so $\Delta(P)=0$.
If $\Delta(\mu)=0$, we know that except for a $\mu_Z$-null set, $\mu_{XY \mid Z}$ agrees with $\mu_{X \mid Z} \otimes \mu_{Y \mid Z}$ on every rectangle $\cA_j\times\cB_k$. These rectangles form a $\pi$-system generating the product $\sigma$-algebra, hence $X\indep_\mu Y\mid Z$.

\emph{The estimator.}
Because $\cZ$ is standard Borel, there is a Borel isomorphism $\phi$ from $\cZ$ onto a Borel subset of $\R$ \citep[Theorem 15.6]{Kechris1995}, with $\sigma(\phi(Z))=\sigma(Z)$.
Each of
\begin{gather*}
\eta_{jk}^{XY}(z)=\E[\indic_{\cA_j}(X)\indic_{\cB_k}(Y)\mid Z=z],
\\
\eta_j^X(z)=\E[\indic_{\cA_j}(X)\mid Z=z],
\qquad
\eta_k^Y(z)=\E[\indic_{\cB_k}(Y)\mid Z=z]
\end{gather*}
is therefore the regression of a bounded observable response from the real variable $\phi(Z)$.

Partition the $n$ observations, which lie in $\cV = \cX \times \cY \times \cZ$, into
training points $\cV_\tr$ of size $\floor{n/2}$
and evaluation points $\cV_\eval$ of size $\ceil{n/2}$.
Estimate the regressions on $\cV_\tr$ by a partitioning estimate in $\phi(Z)$,
predicting the training mean in each cell $[i h_n,(i+1)h_n)$, $i\in\mathbb Z$, and zero in empty cells,
with $h_n\to0$ and $nh_n\to\infty$.
This estimate is universally consistent for bounded targets on $\R$
in $L^2(\mu_Z)$ \parencite[Theorem~4.2]{gyorfiEtAl2002},
hence also in $L^1(\mu_Z)$.
Thus $\widehat\eta_{jk}^{XY}$, $\widehat\eta_j^X$, $\widehat\eta_k^Y$ converge to their targets in $L^1(\mu_Z)$ in probability under every $\mu$.
Pick $K_n\to\infty$ and set
\[
  \widehat d_{jk}
  =\frac{1}{\abs{\cV_\eval}} \sum_{(x,y,z) \in \cV_\eval}
   \abs*{\widehat\eta_{jk}^{XY}(z) - \widehat\eta_j^X(z) \, \widehat\eta_k^Y(z)},
  \qquad
  \widehat\Delta_n = \sum_{j,k \le K_n} w_{jk}\, \widehat d_{jk}
.\]
Given $\cV_\tr$, each $\widehat d_{jk}$ is an average of $\abs{\cV_\eval}\to\infty$ bounded i.i.d.\ terms, so $\widehat d_{jk} \to \E[\widehat d_{jk}\mid\cV_\tr]$ in probability as $\abs{\cV_\eval}$ grows.
Meanwhile,
using the $L^1(\mu_Z)$ norm,
\[
  \abs[\big]{\E[\widehat d_{jk}\mid\cV_\tr]-d_{jk}(\mu)}
  \le
  \norm{\widehat\eta_{jk}^{XY} - \eta_{jk}^{XY}}%
  + \norm{\widehat\eta_j^X - \eta_j^X}%
  + \norm{\widehat\eta_k^Y - \eta_k^Y}%
  \to0
\]
in probability as $\abs{\cV_\tr}$ grows.
Thus $\widehat d_{jk}\to d_{jk}(\mu)$ in probability for each $(j,k)$, and since the $\widehat d_{jk}$ are bounded and the $w_{jk}$ summable, $\widehat\Delta_n\to\Delta(\mu)$ in probability.
\end{proof}

\section{Setup and main result}
\label{sec:setup}

From now until \cref{sec:discrete-z}, we specialize to $\cV=\R^s$ with $s=d_X+d_Y+d_Z$ and $d_X,d_Y,d_Z\geq1$. A random point of $\R^s$ is $V=(X,Y,Z)$, and a fixed one is $v=(x,y,z)$.
Recall from \cref{thm:sp} that, for $M \in (0, \infty]$,
$\cE_M$ is the class of laws on $\R^s$ that are absolutely continuous with respect to Lebesgue measure and supported in $(-M,M)^s$,
and that $\cP_M \subseteq \cE_M$ consists of those with $X\indep Y\mid Z$;
put $\cQ_M = \cE_M \setminus \cP_M$.

For $\mu \in \cE_\infty$, write $f$ for its Lebesgue density,
$\norm{f}_\infty$ for the essential supremum of $f$,
and $f_Z$, $f_{XZ}$, $f_{YZ}$ for the corresponding marginal densities.
For $L \in (0, \infty]$, let
$\cE_{M,L} \subset \cE_M$ be the distributions whose density $f$ has $\norm f_\infty \le L$,
and
$\cP_{M,L} = \cP_M \cap \cE_{M,L}$,
$\cQ_{M,L} = \cQ_M \cap \cE_{M,L}$.
Using $L=\infty$ recovers $\cE_M$, $\cP_M$, and $\cQ_M$.

A measure $\mu$ on $\cV$ satisfies
conditional independence $X\indep_\mu Y\mid Z$
if and only if
the conditional law of $(X,Y)$ given $Z$ is a product measure for $\mu$-almost every value of $Z$.
If $\mu$ has a density $f$, this is equivalent to
\begin{equation}
  f(x,y,z) \, f_Z(z)
  = f_{XZ}(x,z) \, f_{YZ}(y,z)
  \quad\text{for Lebesgue-almost every }(x,y,z)\in\R^s.
  \label{eq:ci-density}
\end{equation}

The total variation distance between measures $\mu, \nu$
is $\norm{\mu-\nu}_{\TV} = \sup_A \abs{\mu(A) - \nu(A)}$,
where the supremum is over measurable $A$;
this is $\frac12 \norm{f - g}_1$ when $\mu, \nu$ have densities $f, g$.
As tests $\psi_n$ take values in $[0,1]$ and total variation is subadditive under products,
\begin{equation}
      \abs{\mu^n \psi_n - \nu^n \psi_n}
  \le \norm{\mu^n - \nu^n}_{\TV}
  \le n \norm{\mu - \nu}_{\TV}
  \label{eq:rej-tv}
.\end{equation}
It is key to both \citet{shahPeters2020}
and the present work, however,
that this bound is potentially loose:
any alternative has positive total variation distance from all nulls (\cref{thm:tv-closed}).
If $\psi_n$ is Lebesgue-measurable, though,
there are sequences of measures separated in TV on which $\psi_n$ still behaves similarly.

Our main result is the following theorem, proved in \cref{sec:main-proof}.

\begin{theorem}[store=thm:main]
Let $M\in(0,\infty]$ and $L\in\bigl((2M)^{-s},\infty\bigr]$,
interpreting $\infty^{-s}$ as $0$.
For every sequence of tests $(\psi_n)$,
there is a $P_* \in \cP_{M,L}$ with
\[
  \limsup_{n\to\infty} P_*^n \psi_n
  \ge
  \inf_{Q \in \cQ_{M,L}} \limsup_{n\to\infty} Q^n \psi_n
.\]
In particular, if $(\psi_n)$ has pointwise asymptotic level $\alpha$ for the null $\cP_{M,L}$, then
\[
  \inf_{Q\in\cQ_{M,L}}\;\limsup_{n\to\infty}\,Q^n\psi_n\le\alpha,
\]
so for $\alpha<1$ it is not pointwise consistent against $\cQ_{M,L}$.
\end{theorem}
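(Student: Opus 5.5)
The plan is to follow the outline sketched in the introduction: an approximation step followed by a diagonal gluing construction. Let $\beta = \inf_{Q\in\cQ_{M,L}} \limsup_n Q^n\psi_n$. If $\beta=0$ any $P_*$ works, so assume $\beta>0$ and fix a sequence $\eps_k\searrow 0$. The goal is a single null $P_*$ together with infinitely many sample sizes $n_k\to\infty$ such that $P_*^{n_k}\psi_{n_k} \ge \beta - \eps_k$.

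\emph{Step 1 (oscillation lemma).} Fix a conditionally dependent $Q$ with bounded density supported in a small cube. I will construct nulls $P_m\in\cP_{M,L}$ with $P_m^n h \to Q^n h$ for every fixed $n$ and every bounded measurable $h$ on $\cV^n$. The construction follows \citet{shahPeters2020}. Partition the $Z$-coordinate into $2^m$ fine slabs. Within each slab, replace the $Z$-density by a rapidly oscillating perturbation (for example, Rademacher-like functions of the $m$th binary digit of $z_1$). This hides the $(X,Y)$ dependence in a way that makes $(X,Y)\indep Z'$-slab-wise independent, so that conditional independence holds exactly while the density bound $L$ is kept.

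The convergence is obtained as follows. The density of $P_m^n$ is $f_m^{\otimes n}$, where $f_m(x,y,z)=g(x,y,z)\,r_m(z)$ and $r_m$ oscillates. Since $r_m\to$ const weakly in $L^\infty$ by a Riemann–Lebesgue-type argument, the products also converge weak-$*$ on $(\R^s)^n$ against $L^1$ test densities, because the $n$ oscillating factors act on independent coordinates. Because $h\,q^{\otimes n}$-type integrands lie in $L^1$, we get $P_m^n\psi_n \to Q^n\psi_n$. The requirement $L>(2M)^{-s}$ leaves room for the uniform density on the cube plus perturbations, so I would take all measures to be mixtures with a uniform component to stay in $\cE_{M,L}$.

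\emph{Step 2 (gluing).} Choose disjoint subcubes $C_1,C_2,\dots$ of $(-M,M)^s$ that are separated in the $Z$-coordinate, with weights $w_k>0$ summing to at most $1$; the rest of the mass goes to a fixed product-form background. Place a rescaled copy of $P_{m_k}$ in $C_k$. Conditional independence of the mixture holds because the pieces have disjoint $Z$-ranges, and each piece is conditionally independent. Replacing the piece in $C_k$ by the rescaled $Q$ yields an alternative $Q^{(k)}$, with $\limsup_n (Q^{(k)})^n\psi_n\ge\beta$.

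The choices are made inductively. Given $m_1,\dots,m_{k-1}$ and $n_1,\dots,n_{k-1}$, consider the alternative that uses $Q$ in slot $k$ and nulls in slots $j<k$. The tail slots $j>k$ are still undetermined, but they carry total weight $\sum_{j>k}w_j$, so I choose the $w_j$ decaying fast enough to pay for this via \eqref{eq:rej-tv}. Pick $n_k>n_{k-1}$ with rejection probability at least $\beta-\eps_k/3$. Then, by Step 1 applied at the fixed $n_k$, pick $m_k$ so that swapping slot $k$ to $P_{m_k}$ changes $\mu^{n_k}\psi_{n_k}$ by less than $\eps_k/3$.

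\emph{Step 3 (closing the induction).} Later slots must not disturb the behaviour at $n_k$. To arrange this, I choose $w_{k+1},w_{k+2},\dots$ after $n_k$ is fixed, with $n_k\sum_{j>k}w_j<\eps_k/3$, so by \eqref{eq:rej-tv} every completion of the tail changes rejection at $n_k$ by less than $\eps_k/3$. The delicate point, and the main obstacle, is that the alternative ``$Q$ in slot $k$'' must be specified before the tail weights are known. I would handle this by defining it with the tail mass placed in the product-form background and then applying the TV bound, so the order of choices is: $n_k$, then $m_k$, then $w_{k+1}$. Slot $k+1$ then has tiny weight, so its $Q$-copy's effect shows up only at much larger $n$; this is acceptable because $\limsup$ only needs some $n$. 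A subtlety is that the $\limsup$ over $n$ of the $k$th alternative must be at least $\beta$ even though the weight $w_k$ is small. This holds by the definition of $\beta$, since the glued $Q^{(k)}$ is itself in $\cQ_{M,L}$. Combining the three $\eps_k/3$ errors gives $P_*^{n_k}\psi_{n_k}\ge\beta-\eps_k$, and the pointwise-level corollary is then immediate.
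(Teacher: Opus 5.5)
\paragraph{Steps 2--3 match the paper.} They are essentially the paper's diagonal construction (\cref{thm:diagonal}):
\begin{itemize}
\item $Z$-separated slots;
\item a background that absorbs the not-yet-assigned tail mass, so the stage-$k$ alternative is defined before later weights are chosen;
\item the order $n_k$, then $m_k$, then $w_{k+1}$ with $n_k\sum_{j>k}w_j$ small;
\item \eqref{eq:rej-tv} to finish.
\end{itemize}
The background must also be $Z$-separated from the slots. A product law that overlaps a slot in $z$ mixes two different product conditionals at the same $z$, and that mixture is generally not a product.

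\paragraph{The gap is in Step~1, and the mechanism you write down cannot work.} Step~1 is the heart of the argument. Suppose $f_m(x,y,z)=g(x,y,z)\,r_m(z)$ with $g$ fixed. Then wherever $r_m(z)>0$, the conditional density of $(X,Y)$ given $Z=z$ is $g(x,y,z)/\int g(x',y',z)\ud x'\ud y'$, which does not depend on $m$. The weak-$*$ limit $c\,g$ has exactly the same conditionals. So $P_m$ and the limit share their conditional laws wherever $r_m>0$. Reweighting the $Z$-marginal, however rapidly, cannot turn a dependent limit into conditionally independent approximants.

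\paragraph{The missing idea.} The oscillation must act on the conditional law of $(X,Y)$ itself. Across fine $z$-slabs it should alternate between two \emph{different} product laws whose average is not a product. The dependence of the limit then comes from the fact that weak-$*$ limits do not commute with squaring. For example, take $\pm1$-valued centred functions $a_X(x)$, $a_Y(y)$, let $r_m(z)=(-1)^{\lfloor 2mz^{(1)}\rfloor}$, and set
\[
  p_m=(1+\eta r_m a_X)(1+\eta r_m a_Y)=1+\eta r_m(a_X+a_Y)+\eta^2r_m^2a_Xa_Y .
\]
Each $P_m$ factorizes for fixed $z$, so it is conditionally independent. But $r_m\to0$ weak-$*$ while $r_m^2\equiv1$, so the limit is $q=1+\eta^2a_Xa_Y$, which is dependent. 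The common density bound $(1+\eta)^2$ is what lets your Riemann--Lebesgue and tensorization argument reach arbitrary measurable $\psi_n$.

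\paragraph{Citing Shah--Peters does not fill this in.} Their theorem assumes level over all of $\cP_M$, and it supplies nulls that depend on the test and the sample size. Used as a black box, it therefore yields only the case $L=\infty$, not the density-bounded classes $\cP_{M,L}$ in the statement.
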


\begin{proof}[Proof of \cref{thm:main-intro}]
Take $L=\infty$ in \cref{thm:main}.
\end{proof}

The common density bound strengthens both sides of the impossibility result: the test needs level only on a smaller null class, yet cannot achieve a positive lower bound on its asymptotic power improvement over the smaller alternative class.
Under the level assumption, for every $\varepsilon>0$ there is a fixed $Q\in\cQ_{M,L}$ with $\limsup_nQ^n\psi_n<\alpha+\varepsilon$.
As established by \cref{thm:test-functional,thm:ci-functional},
however,
this infimum need not actually be attained.

Given an $M$,
the lower bound on $L$ is the best possible:
any distribution on $(-M, M)^s$ must have maximal density at least $(2 M)^{-s}$,
and only a uniform distribution (which is conditionally independent) can achieve that bound.
Thus for any $L \le (2 M)^{-s}$, $\cQ_{M,L}$ is empty.

\begin{corollary} \label{thm:single-null}
In the setting of \cref{thm:main}, if $Q^n\psi_n\to1$ for every $Q\in\cQ_{M,L}$, then $\limsup_nP_*^n\psi_n=1$ for some $P_*\in\cP_{M,L}$.
\end{corollary}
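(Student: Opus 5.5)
This is a direct consequence of \cref{thm:main}, so the plan is only to unpack that theorem's first conclusion under the hypothesis $Q^n\psi_n\to1$.

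\Cref{thm:main} applies to every sequence of tests, with no level assumption. So for our $(\psi_n)$ it gives some $P_*\in\cP_{M,L}$ with
\[
  \limsup_{n\to\infty} P_*^n\psi_n \ge \inf_{Q\in\cQ_{M,L}}\limsup_{n\to\infty} Q^n\psi_n .
\]
By hypothesis, $Q^n\psi_n\to1$ for every $Q\in\cQ_{M,L}$. Hence $\limsup_n Q^n\psi_n=1$ for each such $Q$, and the infimum on the right equals $1$. For this step to be meaningful, the infimum must not be over an empty set. The assumption $L>(2M)^{-s}$ guarantees that $\cQ_{M,L}$ is nonempty: we can perturb a uniform density on a small subcube with a bounded product-breaking term, keeping it below $L$.

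Combining the two gives $\limsup_n P_*^n\psi_n\ge1$. Since each $\psi_n$ takes values in $[0,1]$, we also have $P_*^n\psi_n\le1$, so $\limsup_n P_*^n\psi_n=1$.

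There is essentially no obstacle here. The only subtle point is checking that $\cQ_{M,L}\neq\emptyset$, and even if it were empty, the infimum convention ($\inf\emptyset=+\infty$) would make \cref{thm:main}'s inequality impossible, so the theorem implicitly handles this case.
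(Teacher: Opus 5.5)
Your proof is correct and matches the paper, which simply notes that the corollary is immediate from \cref{thm:main}: under the hypothesis the infimum on the right-hand side equals $1$, and $P_*^n\psi_n\le1$. One small slip in your aside on nonemptiness: when $L$ is only slightly above $(2M)^{-s}$, keeping the density below $L$ requires a box of volume close to $(2M)^s$, as in the proof of \cref{thm:main}, rather than a small subcube; your fallback observation that \cref{thm:main} itself forces $\cQ_{M,L}\neq\emptyset$ already covers this.
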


\Cref{thm:single-null} is immediate from \cref{thm:main}.
The following sections build up ingredients used in the proof of the latter.

\section{Oscillating families}
\label{sec:oscillation}
The main component of the proof is a sequence of conditionally independent laws
whose behaviour under any fixed test $\psi_n$
converges to that of a single dependent law.
We first define such families on the unit cube $\cU = [0, 1]^s$.

\begin{definition}
\label{def:oscillating}
Absolutely continuous probability measures $(P_m)_{m \ge 1}$ and $Q$
on $\cU$
are
an \emph{oscillating family with density bound $L<\infty$}
if,
with densities $(p_m)$ and $q$,
\begin{romanenum}
  \item\label{item:of-ci}
  $X \indep_{P_m} Y \mid Z$ for every $m$, and
  $X \nindep_Q Y \mid Z$;
  \item\label{item:of-weakstar} for all $F \in L^1(\cU)$, $P_mF\to QF$,
  or equivalently $p_m\to q$ weak-$*$ in $L^\infty(\cU)$;
  \item\label{item:of-bounds} $\norm{p_m}_\infty\le L$ for every $m$, and $\norm{q}_\infty\le L$.
\end{romanenum}
\end{definition}

Property \labelcref{item:of-weakstar} concerns a single observation.
Together with the density bound, it extends to any fixed number of observations.

\begin{lemma}[Tensorization] \label{thm:tensorization}
Let $(P_m)$ and $Q$ be probability measures on $\cU$
whose densities $p_m$ and $q$ satisfy $\norm{p_m}_\infty \le L$,
$\norm{q}_\infty \le L$,
and $p_m \to q$ weak-$*$ in $L^\infty(\cU)$;
for instance, an oscillating family with density bound $L$.
For every $n \ge 1$ and every $F \in L^1\bigl(\cU^n\bigr)$,
$P_m^n F \to Q^n F$ as $m \to \infty$.
Thus $P_m^n \to Q^n$ setwise, and $P_m^n \psi_n \to Q^n \psi_n$ for each test $\psi_n$.
\end{lemma}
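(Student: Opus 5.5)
We prove the convergence $P_m^n F \to Q^n F$ by induction on $n$, using the uniform density bound to pass from tensor products of single-coordinate functions to arbitrary $F \in L^1(\cU^n)$ by density.

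The key step is a telescoping identity. Write
\[
  p_m^{\otimes n} - q^{\otimes n} = \sum_{i=1}^n q^{\otimes(i-1)} \otimes (p_m - q) \otimes p_m^{\otimes(n-i)}.
\]
This reduces everything to controlling one coordinate at a time. The worry is that the trailing factor $p_m^{\otimes(n-i)}$ still depends on $m$. Induction avoids this, as follows.

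\emph{Base case and a priori bound.} For $n=1$ the claim is exactly the weak-$*$ hypothesis, since $L^1(\cU)$ is the predual of $L^\infty(\cU)$. For general $n$, we also record that $\norm{p_m^{\otimes n}}_\infty \le L^n$ and $\norm{q^{\otimes n}}_\infty \le L^n$. Hence
\[
  \abs{P_m^n F - P_m^n G} \le L^n \norm{F-G}_{L^1(\cU^n)},
\]
and likewise for $Q^n$, uniformly in $m$.

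\emph{Reduction to a dense class.} By this uniform Lipschitz bound, it suffices to verify $P_m^n F \to Q^n F$ for $F$ in a dense subset of $L^1(\cU^n)$. Given arbitrary $F$ and $\eta>0$, pick $G$ in the dense class with $\norm{F-G}_1<\eta$. Then
\[
  \limsup_m \abs{P_m^n F - Q^n F} \le 2L^n\eta + \limsup_m \abs{P_m^n G - Q^n G}.
\]
A convenient dense class is the linear span of products $g_1(v_1)\cdots g_n(v_n)$ with each $g_i \in L^1(\cU)$. Indeed, indicators of boxes are such products, and finite linear combinations of box indicators are dense in $L^1$ of a bounded set. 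On a single product the integral factorizes:
\[
  P_m^n(g_1\otimes\cdots\otimes g_n) = \prod_i P_m g_i \to \prod_i Q g_i.
\]
Each factor converges by the $n=1$ case, and a finite product of convergent scalar sequences converges. Linearity extends this to the span. This argument in fact needs no induction and no telescoping: the a priori bound together with density suffices.

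\emph{The remaining consequences.} Setwise convergence follows by taking $F=\indic_A$ for Borel $A\subseteq\cU^n$, which lies in $L^1$ because $\cU^n$ has finite measure. For a test, $\psi_n$ is measurable with values in $[0,1]$, hence in $L^1(\cU^n)$, so $P_m^n\psi_n \to Q^n\psi_n$.

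The only genuinely delicate point is the density step. Weak-$*$ convergence alone does not pass to products: $p_m\otimes p_m$ need not converge weak-$*$ in general without some control. The uniform $L^\infty$ bound $L^n$ is exactly what makes the approximation error uniform in $m$. One should also check that $\psi_n$ is Lebesgue (not merely universally) measurable, which the paper assumes; since all laws here are absolutely continuous, this is harmless.
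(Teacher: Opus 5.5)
Your argument is correct and is essentially the paper's proof: factorize the integral on product functions (the paper uses products of indicators $\prod_i \indic_{A_i}(v_i)$, you use products of $L^1(\cU)$ functions), then extend to all of $L^1(\cU^n)$ by density of finite linear combinations of box indicators, using the uniform bound $L^n$ on the product densities. The opening discussion of telescoping and induction is unnecessary, as you note yourself, and can be dropped.
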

\begin{proof}
If $F(v_{1:n}) = \prod_{i = 1}^n \indic_{A_i}(v_i)$ for measurable $A_i \subseteq \cU$, then by weak-$*$ convergence,
\[
  P_m^nF=\prod_{i\le n}P_m(A_i)\to\prod_{i\le n}Q(A_i)=Q^nF.
\]
Finite linear combinations of such $F$ are dense in $L^1\bigl(\cU^n\bigr)$, so for general $F$ and $\eps>0$ pick such a linear combination $F_\eps$ with $\norm{F - F_\eps}_{L^1(\cU^n)} < \eps$.
From
\[
  \abs{P_m^nF-Q^nF}
  \le
    \abs{P_m^n (F - F_\eps)}
  + \abs{(P_m^n - Q^n) F_\eps}
  + \abs{Q^n (F_\eps - F)}
\]
and the fact that the densities of $P_m^n$ and $Q^n$ are at most $L^n$,
the first and third term are each at most $L^n \eps$,
and the second tends to zero.
As $\eps$ was arbitrary, we have $P_m^n F \to Q^n F$.
Take $F=\indic_A$ for setwise convergence, and $F=\psi_n$ for tests.
\end{proof}

Any oscillating family
must have $\liminf_m \norm{P_m - Q}_\TV > 0$,
as otherwise a subsequence of the conditionally independent $P_m$
would converge to the conditionally dependent $Q$,
contradicting \cref{thm:tv-closed}.
Setwise convergence is still possible,
because the sets attaining total variation's supremum over measurable sets
change with $m$:
no fixed test follows the oscillation to arbitrarily fine scales.
The families we use oscillate in the first coordinate of $Z$;
for them, property \labelcref{item:of-weakstar} reduces to a one-dimensional Riemann--Lebesgue statement.

\begin{lemma}
\label{thm:oscillation-1d}
Let $g_m\in L^\infty([0,1])$ satisfy $\norm{g_m}_\infty\leq1$ and
$\sup_{0\le a<b\leq1}\bigl|\int_a^b g_m(t)\ud t\bigr|\to0$ as $m\to\infty$.
Then for every $G\in L^1(\cU)$,
$\int_{\cU} G(v)\,g_m(z^{(1)})\ud v\to0$.
\end{lemma}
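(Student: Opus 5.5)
The plan is a density argument in $L^1(\cU)$, reducing everything to indicators of boxes aligned with the $z^{(1)}$ axis. Write $T_m(G)=\int_{\cU}G(v)\,g_m(z^{(1)})\ud v$. Since $\norm{g_m}_\infty\le1$, each $T_m$ is a linear functional with $\abs{T_m(G)}\le\norm{G}_{L^1(\cU)}$ uniformly in $m$. Hence the set of $G$ with $T_m(G)\to0$ is a closed linear subspace of $L^1(\cU)$: given $G$ and $\eps>0$, choose $G_\eps$ in the subspace with $\norm{G-G_\eps}_1<\eps$, so that $\limsup_m\abs{T_m(G)}\le\eps+\limsup_m\abs{T_m(G_\eps)}=\eps$. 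It therefore suffices to verify the convergence on a family whose linear span is dense in $L^1(\cU)$.

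For that family I take indicators of products $G=\indic_{[a,b)}(z^{(1)})\,\indic_R(w)$. Here $w\in[0,1]^{s-1}$ collects the remaining coordinates and $R\subseteq[0,1]^{s-1}$ is a box, or more generally any measurable set. Finite linear combinations of indicators of boxes in $[0,1]^s$ are dense in $L^1(\cU)$, which is the standard step-function density, so this family has dense span. For such $G$, Fubini gives
\[
  T_m(G)=\lambda^{s-1}(R)\int_a^b g_m(t)\ud t ,
\]
and the absolute value is bounded by $\sup_{0\le a<b\le1}\bigl|\int_a^b g_m\bigr|\to0$. The closed intervals $[a,b]$ and half-open $[a,b)$ have the same integrals, so endpoints do not matter.

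None of these steps is a real obstacle. The only point needing care is that the approximation constant stays uniform in $m$, which holds because $\norm{g_m}_\infty\le1$. The one fact being assumed is the $L^1$-density of simple functions on boxes, which is routine: by regularity of Lebesgue measure, any measurable set can be approximated in measure by finite unions of boxes.
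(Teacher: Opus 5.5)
Your proposal is correct and follows essentially the same route as the paper's proof. You check box indicators via Fubini, getting the volume of the remaining sides times $\int_a^b g_m$, and then extend to all of $L^1(\cU)$ by density of box step functions, with $\norm{g_m}_\infty\le1$ keeping the approximation error uniform in $m$.
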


\begin{proof}
For $G$ the indicator of a box whose $z^{(1)}$ side is $(a,b)$,
the integral is $\int_a^b g_m(t) \ud t$ times the volumes of the remaining sides,
which are each at most one.
Hence the integral's absolute value is at most $\sup_{a<b}\bigl|\int_a^b g_m(t) \ud t \bigr| \to 0$.
Finite linear combinations of box indicators are dense in $L^1$, and $\norm{g_m}_\infty\leq1$, so the same approximation argument as in the proof of \cref{thm:tensorization} extends the conclusion to every $G\in L^1(\cU)$.
\end{proof}

\begin{lemma}[Piecewise constant family]
\label{thm:oscillation}
For each $\eta\in(0,1]$, there is an oscillating family
$((P^\eta_m)_{m\geq1}, Q^\eta)$ with density bound $(1+\eta)^2$
in which
$\norm{P^\eta_m-Q^\eta}_{\TV}=\eta/2$ for every $m$.
\end{lemma}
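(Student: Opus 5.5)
The construction makes the dependence in $Q^\eta$ only a second-order term, while each $P^\eta_m$ is an explicit product whose first-order parts oscillate in $z^{(1)}$ and wash out in the weak-$*$ limit. Let $s:[0,1]\to\{\pm1\}$ be $s(t)=1$ for $t<1/2$ and $s(t)=-1$ otherwise, so that $\int_0^1 s=0$. For each $m$ let $r_m:[0,1]\to\{\pm1\}$ alternate sign on consecutive intervals of length $1/(2m)$. Every interval integral of $r_m$ is then at most $1/(2m)$ in absolute value. Writing $s_x=s(x^{(1)})$ and $s_y=s(y^{(1)})$, define on $\cU$
\[
  p_m(v)=\bigl(1+\eta\, r_m(z^{(1)})\, s_x\bigr)\bigl(1+\eta\, r_m(z^{(1)})\, s_y\bigr),
  \qquad
  q(v)=1+\eta^2 s_x s_y .
\]
Expanding, $p_m-q=\eta\, r_m(z^{(1)})\,(s_x+s_y)$, because $r_m^2=1$.

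First I check these are probability densities. Since $\eta\le1$, both factors of $p_m$ are nonnegative, and $q\ge 1-\eta^2\ge0$. Each integrates to $1$ because $\int s=0$ kills every non-constant term. The bounds are immediate: $\norm{p_m}_\infty\le(1+\eta)^2$ and $\norm{q}_\infty\le 1+\eta^2\le(1+\eta)^2$.

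For the total variation, $\abs{s_x+s_y}$ equals $2$ on a set of measure $1/2$ and $0$ elsewhere. Hence $\norm{p_m-q}_1=\eta$ and $\norm{P^\eta_m-Q^\eta}_{\TV}=\eta/2$ exactly, for every $m$.

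Next, property \labelcref{item:of-ci}. Under $p_m$, integrating out $(x,y)$ gives $f_Z\equiv1$. Integrating out only $y$ gives $f_{XZ}=1+\eta r_m s_x$, and similarly $f_{YZ}=1+\eta r_m s_y$. So \eqref{eq:ci-density} holds identically and $X\indep Y\mid Z$ under $P^\eta_m$. Under $q$ we have $f_Z=f_{XZ}=f_{YZ}=1$, whereas $f=1+\eta^2s_xs_y\neq1$ on a set of positive measure. So \eqref{eq:ci-density} fails and $Q^\eta$ is conditionally dependent.

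Finally, property \labelcref{item:of-weakstar}. For $F\in L^1(\cU)$, we have $P_mF-QF=\int_\cU G(v)\,r_m(z^{(1)})\ud v$ with $G=\eta F\,(s_x+s_y)\in L^1(\cU)$. Taking $g_m=r_m$ in \cref{thm:oscillation-1d} makes this tend to $0$.

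I expect the only delicate point to be the bookkeeping for the stated constants. The product must use coefficient $\eta$ rather than $\sqrt\eta$, so that the TV distance is exactly $\eta/2$; the price is that the limiting dependence has strength only $\eta^2$. This is harmless, since it is still nonzero.
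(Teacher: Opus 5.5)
Your proposal is correct and follows the paper's proof essentially line for line. It uses the same densities $p_m=(1+\eta r_m s_x)(1+\eta r_m s_y)$ and $q=1+\eta^2 s_xs_y$, the identity $p_m-q=\eta r_m(s_x+s_y)$ for the exact total variation $\eta/2$, the factorization for conditional independence, and \cref{thm:oscillation-1d} with $g_m=r_m$ for the weak-$*$ convergence.
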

\begin{proof}
Define $P_m^\eta$ and $Q^\eta$ through the densities
(illustrated in \cref{fig:pm-q})
\[
  p_m^\eta(v) = \bigl( 1 + \eta r_m(z) a_X(x) \bigr) \bigl( 1 + \eta r_m(z) a_Y(y) \bigr)
  \quad\text{and}\quad
  q^\eta(v) = 1 + \eta^2 a_X(x) a_Y(y)
,\]
given in terms of the $\{-1, 1\}$-valued functions
\[
  a_X(x) = 2 \indic(x^{(1)}<1/2) - 1
  ,\quad
  a_Y(y) = 2 \indic(y^{(1)}<1/2) - 1
  ,\quad
  r_m(z)=(-1)^{\floor{ 2m z^{(1)} }}
.\]
\begin{figure}[t]
  \centering
  \begin{tikzpicture}[x=1cm,y=1cm]
  \begin{scope}
    \celltwo{0}{0}\cellzero{1}{0}\cellzero{0}{1}\celltwo{1}{1}
    \panelframe{$Q^1$}
  \end{scope}

  \begin{scope}[shift={(3,0)}]
    \cellfour{0}{0}\cellzero{1}{0}\cellzero{0}{1}\cellzero{1}{1}
    \panelframe{$P_m^1 \mid r_m(z) = +1$}
  \end{scope}

  \begin{scope}[shift={(6,0)}]
    \cellzero{0}{0}\cellzero{1}{0}\cellzero{0}{1}\cellfour{1}{1}
    \panelframe{$P_m^1 \mid r_m(z) = -1$}
  \end{scope}

  \def\W{3}

  \begin{scope}[shift={(9,1.6)}]
    \fill[rpos] (0,0) rectangle ({\W/2},.5);
    \fill[rneg] ({\W/2},0) rectangle (\W,.5);
    \node[font=\scriptsize] at ({\W/4},.25) {$+$};
    \node[font=\scriptsize,text=white] at ({3*\W/4},.25) {$-$};
    \draw[line width=.7pt] (0,0) rectangle (\W,.5);
    \foreach \t/\l in {0/{$0$},0.5/{$\tfrac12$},1/{$1$}}{%
      \draw[line width=.5pt] ({\t*\W},0)--({\t*\W},-.09);
      \node[font=\scriptsize,below=0pt] at ({\t*\W},-.09) {\l};}
    \node[font=\scriptsize] at ({\W/4},-.38) {$z^{(1)}$};
    \node[font=\small] at ({\W/2},.92) {$r_1(z)=(-1)^{\lfloor 2z^{(1)}\rfloor}$};
  \end{scope}

  \begin{scope}[shift={(9,-0.3)}]
    \foreach \k in {0,2,4}{%
      \fill[rpos] ({\k*\W/6},0) rectangle ({(\k+1)*\W/6},.5);
      \node[font=\scriptsize] at ({(\k+.5)*\W/6},.25) {$+$};}
    \foreach \k in {1,3,5}{%
      \fill[rneg] ({\k*\W/6},0) rectangle ({(\k+1)*\W/6},.5);
      \node[font=\scriptsize,text=white] at ({(\k+.5)*\W/6},.25) {$-$};}
    \draw[line width=.7pt] (0,0) rectangle (\W,.5);
    \foreach \t/\l in {0/{$0$},0.5/{$\tfrac12$},1/{$1$}}{%
      \draw[line width=.5pt] ({\t*\W},0)--({\t*\W},-.09);
      \node[font=\scriptsize,below=0pt] at ({\t*\W},-.09) {\l};}
    \foreach \k in {1,2,4,5}{%
      \draw[line width=.4pt,black!45] ({\k*\W/6},0)--({\k*\W/6},-.05);}
    \node[font=\scriptsize] at ({\W/4},-.38) {$z^{(1)}$};
    \node[font=\small] at ({\W/2},.92) {$r_{3}(z)=(-1)^{\lfloor 6z^{(1)}\rfloor}$};
\end{scope}
\end{tikzpicture}
  \caption{The construction of $P_m^1$ and $Q^1$ in \cref{thm:oscillation}. Each marginal of $(X, Y)$ is the same; the $P_m^1$ differ only in how $Z$ is split.
  For $\eta \searrow 0$, the densities are constant in the same blocks,
but each value approaches $1$.
\label{fig:pm-q}}
\end{figure}
Since $a_X$ and $a_Y$ integrate to zero, $p_m^\eta$ and $q^\eta$ integrate to one.
We also have $(1-\eta)^2 \le p_m^\eta(v) \le (1 + \eta)^2$
and $1 - \eta^2 \le q^\eta(v) \le 1 + \eta^2 < (1 + \eta)^2$,
giving property \labelcref{item:of-bounds}.

For fixed $z$, $p_m^\eta$ is either $(1 + \eta a_X)(1 + \eta a_Y)$ or $(1 - \eta a_X)(1 - \eta a_Y)$, a product, so $X\indep_{P_m^\eta}Y\mid Z$.
Under $Q$, the conditional marginals of $X$ and $Y$ are $1$,
while the conditional joint density is $1 + \eta^2 a_X a_Y$.
Since $\abs{a_X a_Y} = 1$ and $\eta > 0$, $X\nindep_QY\mid Z$,
and we have \labelcref{item:of-ci}.

Using $r_m^2=1$,
\begin{equation}
  p_m^\eta(v) - q^\eta(v) = \eta \, r_m(z) \bigl( a_X(x) + a_Y(y) \bigr).
  \label{eq:pm-minus-q}
\end{equation}
Notice $\abs{a_X+a_Y}$ equals $2$ on the set $\{a_X=a_Y\}$, which has measure $1/2$, and vanishes elsewhere;
thus $2 \norm{P_m^\eta - Q^\eta}_{\TV} = \int \abs{p_m^\eta - q^\eta} \ud v
= \int \eta \abs{a_X + a_Y} \ud x \ud y = \eta$.

\Cref{eq:pm-minus-q} gives $(P_m^\eta - Q^\eta) F = \int G(v)\,r_m(z)\ud v$ 
with $G = F \cdot \eta (a_X + a_Y) \in L^1(\cU)$.
Antiderivatives of $g_m(t) = (-1)^{\floor{2 m t}}$
are piecewise-linear and oscillate in an interval of length $1/(2m) \to 0$,
so \cref{thm:oscillation-1d} applies and shows \labelcref{item:of-weakstar}.
\end{proof}

\section{Gluing and local replacement}
\label{sec:ingredients}

We now place rescaled copies of an oscillating family inside one distribution.
A \emph{box} is a product $B=\prod_{i\le s}(a_i,b_i)$ of bounded open intervals in $\R^s$;
we use $\vol(B)$ for its Lebesgue measure,
and $B_Z$ for its projection onto the $Z$ coordinates.
Let $T_B : \cU \to \overline B$ be the affine bijection $T_B(u)=(a_i+(b_i-a_i)u_i)_{i\le s}$.
If $\mu$ is a probability measure on $\cU$ with density $f$, then $(T_B)\push\mu$ is concentrated on $B$, has support in $\overline B$, and has density $f(T_B^{-1}v)/\vol(B)$ on $B$.
Since $T_B$ acts separately and bijectively on the blocks $x$, $y$, and $z$, $X\indep Y\mid Z$ holds under $(T_B)\push\mu$ if and only if it holds under $\mu$.

Mixing components that live over disjoint $Z$-regions preserves their conditional structure, because conditioning on $Z$ reveals which component was drawn.

\begin{lemma}
\label{thm:gluing}
Let $I_0,I_1,\dots$ be pairwise-disjoint Borel subsets of $\R^{d_Z}$, and for each $j$ let $R_j\in\cE_\infty$ satisfy $R_j(\R^{d_X+d_Y}\times I_j)=1$.
Let $w_j\geq0$ with $\sum_j w_j=1$, and put $\mu = \sum_j w_j R_j$.
Then $\mu \in \cE_\infty$, and $X\indep_\mu Y\mid Z$ if and only if
for every $j$ with $w_j > 0$, $X \indep_{R_j} Y \mid Z$.
\end{lemma}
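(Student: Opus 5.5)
The plan is to work directly with densities and the characterization \eqref{eq:ci-density}. Let $r_j$ denote the density of $R_j$. Since $R_j(\R^{d_X+d_Y}\times I_j)=1$, we may modify $r_j$ on a null set so that $r_j(x,y,z)=0$ whenever $z\notin I_j$. Then $f=\sum_j w_j r_j$ is a nonnegative integrable function with integral $\sum_j w_j=1$, and it is a density of $\mu$ by monotone convergence. This shows $\mu\in\cE_\infty$; if the ambient class restricts supports to a box, the same restriction passes to $\mu$ as a countable mixture.

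The key observation is that on the slab $\R^{d_X+d_Y}\times I_j$, every term with index $k\neq j$ vanishes, because the $I_k$ are pairwise disjoint. Hence for $z\in I_j$ we have $f(x,y,z)=w_j r_j(x,y,z)$. Integrating out coordinates, which is legitimate by Tonelli since everything is nonnegative, gives the same relation for the marginals: $f_Z(z)=w_j r_{j,Z}(z)$, $f_{XZ}(x,z)=w_j r_{j,XZ}(x,z)$, and $f_{YZ}(y,z)=w_j r_{j,YZ}(y,z)$, all for $z\in I_j$. Outside $\bigcup_j I_j$, every one of these densities is zero. Consequently, on the slab over $I_j$ the identity \eqref{eq:ci-density} for $\mu$ reads
\[
w_j^2\, r_j\, r_{j,Z} = w_j^2\, r_{j,XZ}\, r_{j,YZ},
\]
and it holds trivially off $\bigcup_j I_j$. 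Moreover, the identity \eqref{eq:ci-density} for $R_j$ holds trivially off the slab over $I_j$, since both sides vanish there.

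Both directions now follow. Suppose $X\indep_{R_j}Y\mid Z$ for every $j$ with $w_j>0$. Then \eqref{eq:ci-density} for $\mu$ holds a.e.\ on each slab with $w_j>0$. On slabs with $w_j=0$, both sides are zero. The slabs together with the complement of their union form a countable partition of $\R^s$, so \eqref{eq:ci-density} holds a.e.\ everywhere. Conversely, if it holds for $\mu$ and $w_j>0$, we divide by $w_j^2$ on the slab over $I_j$ to get \eqref{eq:ci-density} for $R_j$ there, and off the slab it holds trivially.

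The only step needing care is the marginal bookkeeping: we must be sure that the marginals of the mixture restricted to $z\in I_j$ involve only the $j$th component. This relies on choosing versions of $r_j$ that vanish off the slab and on the Borel-measurability of the $I_j$, which lets the product-set restrictions commute with integration. With that settled, the countable sums pose no difficulty.
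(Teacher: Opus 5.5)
Your proof is correct and follows the paper's argument essentially step for step: $f=\sum_j w_j r_j$ by countable additivity; on $\R^{d_X+d_Y}\times I_j$, $f$ and its $Z$, $XZ$, $YZ$ marginals are $w_j$ times those of $r_j$, so both sides of \eqref{eq:ci-density} scale by $w_j^2$ there; and everything vanishes off the slabs with $w_j>0$. Choosing versions of $r_j$ that vanish off their slabs, so these identities hold pointwise rather than almost everywhere, is only cosmetic. Your aside about box-restricted classes is not needed for the statement, and it is not quite right for open boxes such as $(-M,M)^s$, because the support of a countable mixture is the closure of a union of supports and can reach the boundary.
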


\begin{proof}
By countable additivity, $\mu$ is absolutely continuous with density $f = \sum_j w_j r_j$.
Write $E_j = \R^{d_X+d_Y} \times I_j$.
Because $R_j$ is concentrated on $E_j$, the density $r_j$ and the marginal densities $(r_j)_Z$, $(r_j)_{XZ}$, and $(r_j)_{YZ}$ vanish on almost all points whose $z$ coordinate is outside $I_j$.
As the $I_j$ are disjoint,
almost everywhere on $E_j$ we therefore have
$f = w_j r_j$,
$f_Z = w_j \, (r_j)_Z$,
$f_{XZ} = w_j \, (r_j)_{XZ}$,
and $f_{YZ} = w_j \, (r_j)_{YZ}$;
each side of \eqref{eq:ci-density} for $f$ is $w_j^2$ times the corresponding side for $r_j$.
Off $\bigcup_{w_j>0} E_j$, both sides for $f$ vanish almost everywhere.
Hence \eqref{eq:ci-density} holds for $f$ if and only if, for every $j$ with $w_j>0$, it holds for $r_j$ almost everywhere on $E_j$.
For $r_j$, this is the same as holding almost everywhere on $\R^s$.
\end{proof}

\begin{lemma}
\label{thm:local-replacement}
Let $B$ be a box, and $((P_m), Q)$ an oscillating family with density bound $L$.
\begin{romanenum}
  \item\label{item:lr-basic} $(T_B) \push P_m$ and $(T_B) \push Q$ are absolutely continuous and concentrated on $B$, with supports in $\overline B$ and densities bounded by $L/\vol(B)$.
  Moreover, $X\indep_{(T_B) \push P_m}Y\mid Z$ for every $m$,
  while $X\nindep_{(T_B) \push Q}Y\mid Z$.
  \item\label{item:lr-conv} Let $\mu$ be a finite nonnegative measure on $\R^s$ and $w\geq0$ with $\mu(\R^s)+w=1$.  For every $n\geq1$ and every test $\psi_n$,
        \[
          \bigl(\mu + w\,(T_B)\push P_m\bigr)^n\psi_n\longrightarrow\bigl(\mu + w\,(T_B)\push Q\bigr)^n\psi_n
          \qquad\text{as }m\to\infty.
        \]
\end{romanenum}
\end{lemma}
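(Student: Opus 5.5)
For part \labelcref{item:lr-basic}, I will read everything off the affine change of variables recorded just before \cref{thm:gluing}. The pushforward $(T_B)\push P_m$ has density $p_m(T_B^{-1}v)/\vol(B)$ on $B$ and zero elsewhere. It is therefore absolutely continuous and concentrated on $B$ (the boundary of $B$ is Lebesgue-null), its support lies in $\overline B$, and \cref{def:oscillating}\labelcref{item:of-bounds} bounds its density by $L/\vol(B)$. The same holds for $Q$. For the conditional-independence claims I will cite the remark that $T_B$ acts separately and bijectively on the $x$, $y$, $z$ blocks, so conditional independence is preserved and reflected. Combined with \cref{def:oscillating}\labelcref{item:of-ci}, this gives $X\indep Y\mid Z$ under each $(T_B)\push P_m$ and $X\nindep Y\mid Z$ under $(T_B)\push Q$.

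For part \labelcref{item:lr-conv}, I plan to expand the $n$-fold product of the mixture multilinearly. Writing $\nu_m = w\,(T_B)\push P_m$ and $\nu = w\,(T_B)\push Q$, I get
\[
  (\mu+\nu_m)^n\psi_n=\sum_{S\subseteq\{1,\dots,n\}} \Bigl(\textstyle\bigotimes_{i\le n}\lambda_i^{S}\Bigr)\psi_n,
\]
where $\lambda_i^S=\nu_m$ for $i\in S$ and $\lambda_i^S=\mu$ otherwise. The analogous expansion holds with $\nu$ in place of $\nu_m$. Since there are only finitely many terms, it suffices to show each term converges. Fix $S$ with $\abs{S}=k$. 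By Fubini, integrate out the coordinates outside $S$ against $\mu^{\otimes(n-k)}$. This yields a measurable function $\Phi(v_S)=\int\psi_n\ud\mu^{\otimes(n-k)}$ of the coordinates in $S$, bounded by $\mu(\R^s)^{n-k}\le1$. The term then equals $\nu_m^{\otimes k}\Phi = w^k P_m^k(\Phi\circ T_B^{\otimes k})$. Because $F=\Phi\circ T_B^{\otimes k}$ is bounded and measurable on the bounded set $\cU^k$, it lies in $L^1(\cU^k)$. \Cref{thm:tensorization} then gives $P_m^kF\to Q^kF$, which is the corresponding term with $\nu$. The term $S=\emptyset$ does not depend on $m$.

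I expect the only delicate point to be measurability and applicability. Specifically, $\Phi$ must be measurable, which follows from Tonelli since $\psi_n$ is jointly measurable and bounded. Also, $\mu$ is an arbitrary finite measure that need not be absolutely continuous. This causes no trouble, because tensorization is only ever applied in the coordinates governed by the oscillating family, where the density bound $L^k$ holds. No density bound on $\mu$ is needed: after integrating out, $\mu$ only contributes the bounded factor inside $\Phi$.
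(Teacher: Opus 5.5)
Your proposal is correct and matches the paper's proof. Part (i) comes from the affine change of variables, and part (ii) expands $(\mu+w\,(T_B)\push P_m)^n$ over subsets of coordinates, integrates out the $\mu$-coordinates into a bounded function, and applies \cref{thm:tensorization} to each nonempty term, while the empty-subset term does not depend on $m$.
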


\begin{proof}
  Applying the properties of $(T_B)\push$ stated above
  to \cref{def:oscillating}
  gives part \labelcref{item:lr-basic}.

For part \labelcref{item:lr-conv}, write $S_m = (T_B) \push P_m$ and $S = (T_B) \push Q$, and let $F$ be a bounded measurable function on $(\R^s)^n$.
Expanding out the product measure
in terms of subsets of coordinates,
\[
  (\mu+wS_m)^nF
  =\sum_{A \subseteq [n]} w^{\abs A} \, S_m^{\abs A} G_A
  \quad\text{for}\;\;
  G_A(v_A) = \int F(v_{1:n}) \ud \mu^{\abs{A^c}}(v_{A^c}),
\]
where $v_A$ collects the coordinates indexed by $A$.
The same identity holds with $S$ in place of $S_m$.
For nonempty $A$, each $G_A$ is bounded, so $G_A\circ(T_B\times\dots\times T_B)$ is in $L^1$ on $\cU^{\abs A}$, and \cref{thm:tensorization} gives $S_m^{\abs A}G_A\to S^{\abs A}G_A$.
The term for the empty $A = \{\}$ is $G_{\{\}} = \mu^nF$, which does not depend on $m$.
Thus each of the finitely many terms in the expansion converges to the corresponding term for $S$,
giving $(\mu + w S_m)^n F \to (\mu + w S)^n F$.
Take $F = \psi_n$.
\end{proof}

\section{Proof of the main theorem}
\label{sec:main-proof}

At each stage we insert a dependent component of positive mass, choose a sample size at which the test rejects it sufficiently often, and replace it by a null component with nearly the same rejection probability.
The masses assigned to later stages are then made small enough to preserve that probability in the final distribution.
The following formulation allows a different oscillating family at each stage, as will be needed in the extension of \cref{sec:regular}.

\begin{proposition}[Diagonal construction]
\label{thm:diagonal}
Let $M \in (0, \infty]$,
$G$ be a box with $\overline G \subset (-M,M)^s$,
and $\theta \in (0,1)$.
Write $(a, a+\ell)$ for the first $Z$-coordinate interval of $G$,
and put $b_k = a + \bigl[1-(1-\theta)2^{-k}\bigr] \ell$ for $k \ge 0$.
Slice $G$ from left to right into the boxes
\begin{align*}
     B_0
  &= \bigl\{ v\in G : a < z^{(1)} < b_0 \bigr\},
\\   B_k
  &= \bigl\{ v\in G : b_{k-1} < z^{(1)} < b_k \bigr\}
     \quad\text{for } k \ge 1
.\end{align*}
Note $b_0 = a + \theta \ell$ and $b_k \nearrow a + \ell$,
so $B_0$ takes up a fraction $\theta$ of $G$
and each subsequent slab takes half of the remaining length.

Fix $L \in [1/\theta, \infty)$,
$L_0 = L / \vol(G)$,
and let $U$ be a conditionally independent probability measure on $\cU = [0,1]^s$ with density $u$ having $\norm u_\infty \le \theta L$.

For each $k \ge 1$, let $((P_{k,m})_{m \ge 1}, Q_k)$ be an oscillating family with density bound $L$.

Let $(\psi_n)$ be a sequence of tests.

There are sample sizes $n_1<n_2<\cdots$, indices $m_1,m_2,\dots$,
and positive weights $w_k$ with
$\sum_{k \ge 0} w_k = 1$,
$w_k \le 2(1-\theta)4^{-k}$ for $k \ge 1$,
such that
\[
  P_*=w_0\,(T_{B_0})\push U+\sum_{k\geq1}w_k\,(T_{B_k})\push P_{k,m_k}
\]
lies in $\cP_{M,L_0}$ and satisfies
\[
  \liminf_{k\to\infty}P_*^{n_k}\psi_{n_k}\ge\inf_{Q\in\cQ_{M,L_0}}\limsup_{n\to\infty}Q^n\psi_n.
\]
\end{proposition}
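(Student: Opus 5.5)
The proof is an inductive diagonalization: at each stage we build an alternative, fix a sample size at which the test rejects it nearly as often as $I=\inf_{Q\in\cQ_{M,L_0}}\limsup_n Q^n\psi_n$, and then swap the dependent piece for a null piece. Write $W_k=\sum_{j\ge k}w_j$ for the tail mass. I will choose the tail masses $W_1>W_2>\cdots>0$, which determine $w_k=W_k-W_{k+1}$ and $w_0=1-W_1$, together with $n_k$ and $m_k$, by induction on $k$. The standing constraint is
\[
  W_k\le 2(1-\theta)4^{-k}\le(1-\theta)2^{-k},
\]
which gives $w_k\le 2(1-\theta)4^{-k}$ and $w_0>0$.

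The density bound will be checked by the same calculation throughout. The slab $B_k$ ($k\ge1$) has volume $(1-\theta)2^{-k}\vol(G)$, and any component placed there with mass at most $W_k$ and pulled back from a density bounded by $L$ has density at most
\[
  L\,W_k/\vol(B_k)\le L/\vol(G)=L_0 .
\]
Similarly, $w_0(T_{B_0})\push U$ has density at most $\theta L/(\theta\vol(G))=L_0$. All components are supported in $\overline G\subset(-M,M)^s$, and the $B_k$ have disjoint $Z$-projections, so \cref{thm:gluing} applies to every mixture below.

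\emph{Stage $k$.} Suppose $W_1,\dots,W_k$ and $(n_j,m_j)_{j<k}$ are fixed. Define
\[
  R_k=w_0(T_{B_0})\push U+\sum_{1\le j<k}w_j(T_{B_j})\push P_{j,m_j}+W_k(T_{B_k})\push Q_k .
\]
Its total mass is $w_0+\sum_{j<k}w_j+W_k=1$. By \cref{thm:local-replacement}\labelcref{item:lr-basic} and \cref{thm:gluing}, $R_k$ is conditionally dependent (since $W_k>0$ and $Q_k$ is dependent) and lies in $\cE_{M,L_0}$; hence $R_k\in\cQ_{M,L_0}$. Therefore $\limsup_n R_k^n\psi_n\ge I$, so we may pick $n_k>n_{k-1}$ with $R_k^{n_k}\psi_{n_k}\ge I-1/k$.

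Next apply \cref{thm:local-replacement}\labelcref{item:lr-conv}. Here $\mu$ is the first two terms of $R_k$, $w=W_k$, and $B=B_k$. This gives an $m_k$ such that the null mixture $R'_k$, obtained by replacing $Q_k$ with $P_{k,m_k}$, satisfies $R_k'^{\,n_k}\psi_{n_k}\ge I-2/k$. Finally, choose
\[
  0<W_{k+1}\le\min\bigl\{2(1-\theta)4^{-(k+1)},\,1/(k\,n_k)\bigr\}.
\]

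\emph{Conclusion.} The final $P_*$ agrees with $R'_k$ except that mass $W_{k+1}$, which $R'_k$ spreads as $(T_{B_k})\push P_{k,m_k}$, is instead redistributed over the later slabs. Consequently $\norm{P_*-R'_k}_\TV\le W_{k+1}$. By \eqref{eq:rej-tv},
\[
  P_*^{n_k}\psi_{n_k}\ge I-2/k-n_kW_{k+1}\ge I-3/k ,
\]
which gives the claimed $\liminf$. Moreover, $P_*\in\cP_{M,L_0}$: every component is conditionally independent, so \cref{thm:gluing} applies, and the density and support checks are the ones above.

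The only delicate point is the order of choices. $W_{k+1}$ must be fixed after $n_k$, so that later perturbations cost at most $n_kW_{k+1}$ in rejection probability. At the same time, stage $k$ must already commit the tail mass $W_k$ onto $B_k$, so that the alternative $R_k$ is a genuine probability measure respecting the density bound. Both requirements are met because the cap $(1-\theta)2^{-k}$ coming from the slab volumes is compatible with the chosen decay $4^{-k}$.
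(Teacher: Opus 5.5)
Your proof is correct and is essentially the paper's diagonal construction: commit a dependent component, pick $n_k$, swap it for a null via \cref{thm:local-replacement}, then make all later mass at most $1/(k n_k)$ so that \eqref{eq:rej-tv} preserves the rejection rate. The only difference is bookkeeping: the paper parks the not-yet-allocated mass on the base component in $B_0$ (so $w_0$ is fixed only in the limit, and the tail bound uses $w_{j+1}\le w_j/4$), whereas you fix $w_0=1-W_1$ at the outset and load the whole tail $W_k$ onto $B_k$, which is why you need the slab-capacity bound $W_k\le(1-\theta)2^{-k}$ on tails rather than on individual weights; just state $W_{k+1}<W_k$ explicitly as part of each stage's choice, since your displayed upper bound does not by itself ensure it and it is needed for every $w_k$ to be positive.
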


\begin{proof}
The slabs $B_k$ have pairwise disjoint $(B_k)_Z$ and cover $G$ up to a null set, with $\vol(B_0)=\theta\vol(G)$ and $\vol(B_k) = (1-\theta)2^{-k}\vol(G)$ for $k\ge1$.

Let $R_0=(T_{B_0})\push U$, whose density is at most $\theta L/\vol(B_0)=L_0$.
For $k, m \ge 1$ let
$R_{k,m} = (T_{B_k}) \push P_{k,m}$ and
$D_k = (T_{B_k}) \push Q_k$.

Call $\mu$ a \emph{budgeted slab mixture}
if it has the form
\begin{align}
  \mu = \sum_{k \ge 0} w_k S_k,
  \quad
  \text{with }&
  S_0 = R_0,
  \quad
  &S_k \in \{D_k\} \cup \{R_{k,m} : m \ge 1\}
  \;\;\text{for } k \ge 1,
  \label{eq:mixture-form}
\\
  &w_k \ge 0,
  & \sum_k w_k = 1, \qquad
  w_k \le (1-\theta)2^{-k} \;\;\text{for } k \ge 1
  \label{eq:weight-constraint}
.\end{align}
Every budgeted slab mixture
is absolutely continuous with support in $\overline G$.
Since the slabs are disjoint,
the mixture's density on $B_k$ is $w_k$ times that of $S_k$;
for $k = 0$ this is at most $L_0$.
Noting that $\vol(B_k) = \frac{b_k - b_{k-1}}{\ell} \vol(G) = (1 - \theta) (2^{-k+1} - 2^{-k}) \vol(G)$
for $k \ge 1$,
the density is at most $(1-\theta)2^{-k} L / \vol(B_k) = L_0$
by \cref{thm:local-replacement}\labelcref{item:lr-basic}.
Thus a budgeted slab mixture is in $\cE_{M,L_0}$.
By \cref{thm:gluing} with $I_k=(B_k)_Z$, it lies in $\cP_{M,L_0}$ if every $S_k$ with $k\geq1$ and $w_k>0$ is one of the $R_{k,m}$, and in $\cQ_{M,L_0}$ if $S_k=D_k$ for some $k\geq1$ with $w_k>0$.

\emph{Recursion.}
Write $\rho = \inf_{Q\in\cQ_{M,L_0}} \limsup_n Q^n \psi_n$,
and fix a sequence $\lambda_k \searrow 0$.

Set $n_0 = 0$ and $w_1 = (1 - \theta) / 2$.

At stage $k\geq1$ we are given $n_{k-1}$, positive weights $w_1,\dots,w_k$ satisfying the bounds $w_j\le(1-\theta)2^{-j}$ of \eqref{eq:weight-constraint}, and null components $R_j=R_{j,m_j}$ for $1\le j<k$.
Let
\begin{equation}
  \mu_k = \sum_{j=1}^{k-1} w_j R_j + \Bigl(1 - \sum_{j=1}^k w_j\Bigr) R_0
  ,\qquad
  \widetilde Q_k = \mu_k + w_k D_k
  \label{eq:stage-definitions}
.\end{equation}
The measure $\mu_k$ collects the components fixed so far and has total mass $1 - w_k$;
the budgeted slab mixture $\widetilde Q_k$ activates the dependent component in $B_k$ and, as $w_k>0$, lies in $\cQ_{M,L_0}$.
By the definition of $\rho$, $\limsup_n \widetilde Q_k^{n} \psi_n \ge \rho$,
so we can choose $n_k > n_{k-1}$ with
$
  \widetilde Q_k^{n_k} \psi_{n_k} \ge \rho - \lambda_k
$.
By \cref{thm:local-replacement}\labelcref{item:lr-conv}
with $\mu = \mu_k$ and $w = w_k$,
we can then choose $m_k$ such that $R_k = R_{k,m_k}$
and $\widetilde P_k = \mu_k + w_k R_k$ satisfy
\begin{equation}
  \widetilde P_k^{n_k} \psi_{n_k}
  \ge \widetilde Q_k^{n_k} \psi_{n_k} - \lambda_k
  \ge \rho - 2 \lambda_k
  \label{eq:null-rejection}
.\end{equation}
Finally set
\begin{equation}
  w_{k+1}=\min\Bigl\{\frac{w_k}4,\frac{\lambda_k}{2n_k}\Bigr\},
  \label{eq:tail-budget}
\end{equation}
for which indeed $0 < w_{k+1} \le (1 - \theta) 2^{-(k+1)}$.
We can then pass to stage $k+1$.

\emph{Push it to the limit.}
Since from \eqref{eq:tail-budget}
we have $w_{j+1} \le w_j/4$ for every $j \ge 1$,
we know $\sum_{j>k} w_j \le \frac43 w_{k+1}$ for every $k \ge 0$.
For $k=0$ this gives $\sum_{j\geq1}w_j\leq2(1-\theta)/3$, and for $k\geq1$, \eqref{eq:tail-budget} gives
\begin{equation}
  \sum_{j>k}w_j\leq2w_{k+1}\le\frac{\lambda_k}{n_k}.
  \label{eq:weight-tail}
\end{equation}
Put $w_0 = 1 - \sum_{j\ge1} w_j \ge 1 - 2 (1-\theta)/3 > 0$.
Take $P_* = \sum_{j \ge 0} w_j R_j$, a budgeted slab mixture whose active components are all nulls; thus $P_* \in \cP_{M,L_0}$.
The stage-$k$ law is $\widetilde P_k=\sum_{j=1}^k w_jR_j+\bigl(1-\sum_{j=1}^k w_j\bigr)R_0$, so $P_*-\widetilde P_k=\sum_{j>k}w_j\,(R_j-R_0)$. By \eqref{eq:weight-tail} and the fact that total variation between probability measures is at most one, we have
\[
  \norm{P_* - \widetilde P_k}_{\TV}
  \le \sum_{j>k} w_j \norm{R_j - R_0}_{\TV}
  \le \frac{\lambda_k}{n_k}
.\]
Together with \eqref{eq:rej-tv} and \eqref{eq:null-rejection},
\[
  P_*^{n_k}\psi_{n_k}
  \ge \widetilde P_k^{n_k} \psi_{n_k} - n_k \norm{P_*-\widetilde P_k}_{\TV}
  \ge \rho - 3 \lambda_k
;\]
letting $k\to\infty$ gives $\liminf_k P_*^{n_k} \psi_{n_k} \ge \rho$.
\end{proof}

The same proof allows the infimum to be taken over a smaller alternative class, provided that class contains every possible stage law $\widetilde Q_k$ in \eqref{eq:stage-definitions}.
If we define $\rho$ using that smaller class and run the recursion with this value, nothing else changes.
This observation will allow us to restrict the alternatives to smooth densities in \cref{sec:regular}.

We now have all the ingredients to prove the main result.
\getkeytheorem{thm:main}
\begin{proof}
Choose a box $G$ with $\overline G \subset (-M, M)^s$
and an $\eta \in (0, 1]$
such that $(1 + \eta)^2 / \vol(G) \le L$;
this is always possible,
since we may take $\vol(G)$ arbitrarily close to $(2 M)^s$
and we have $L > (2 M)^{-s}$.
For $M = \infty$, it suffices to take $G$ large.

Now apply \cref{thm:diagonal} with
$L_\eta = (1+\eta)^2$,
$\theta = (1+\eta)^{-2}$,
$U$ the uniform distribution on $\cU$ (whose density is $1 = \theta L_\eta$),
and the family of \cref{thm:oscillation} at every stage.
This gives a density bound $L_\eta / \vol(G) \le L$,
together with $P_* \in \cP_{M,L_\eta/\vol(G)} \subseteq \cP_{M,L}$
and sample sizes $n_k$ such that
\[
      \liminf_k P_*^{n_k} \psi_{n_k}
  \ge \inf_{Q \in \cQ_{M,L_\eta/\vol(G)}} \limsup_n Q^n \psi_n
  \ge \inf_{Q \in \cQ_{M,L}} \limsup_n Q^n \psi_n
.\]
Clearly
$\limsup_n P_*^{n} \psi_{n} \ge \liminf_k P_*^{n_k} \psi_{n_k}$.
If $(\psi_n)$ has pointwise asymptotic level $\alpha$,
the left-hand side is at most $\alpha$;
if it is pointwise consistent, the right-hand side is $1$.
\end{proof}

The diagonalization places no constraint on the growth of $n_k$.
A test may need a very large sample to detect a component of mass $w_k$; once that sample size is found, \eqref{eq:tail-budget} makes the total mass of later components negligible at that sample size.

\section{A regular witness}
\label{sec:regular}

The witness $P_*$ of \cref{thm:main} has a discontinuous density,
and its conditional law jumps between two product measures
at frequencies $m_k$ depending on the test.
(Positive results typically assume some control of the variation of the conditional laws in $z$.)
Neither feature is essential:
using a smooth oscillating family in \cref{thm:diagonal}, with amplitudes decreasing to zero, gives a continuous density and a uniformly continuous conditional law.
The rate of continuity, however, is not uniform over the family.

\begin{lemma}[Smooth family]
\label{thm:smooth-oscillation}
For every $C>1$, there exist a product probability measure $U$ on $\cU=[0,1]^s$,
product probability measures $(\Lambda_t)_{t\in[-1,1]}$ on $[0,1]^{d_X+d_Y}$,
and, for each $\eps\in(0,1]$,
an oscillating family $((P^\eps_m),Q^\eps)$ with density bound $C (1+\eps)^2$,
with the following properties.
\begin{romanenum}
  \item\label{item:sf-density} The laws $U$, $P^\eps_m$, and $Q^\eps$ have $C^\infty$ densities with compact support in $(0,1)^s$.
  The density of $U$ is bounded by $C$.
  \item\label{item:sf-kernel} Versions of the conditional law of $(X,Y)$ given $Z=z$ under $U$ and $P^\eps_m$ are, respectively,
  $\Lambda_0$
  and
  $\Lambda_{\eps\sin(2\pi m z^{(1)})}$.
  For all $t,t'\in[-1,1]$,
  \begin{equation}
    \norm{\Lambda_t-\Lambda_{t'}}_{\TV}\le 2\abs{t-t'}.
    \label{eq:smooth-tv}
  \end{equation}
  \item\label{item:sf-smooth-kernel} The versions in \labelcref{item:sf-kernel}, and a version of the conditional law under $Q^\eps$,
  have densities that are $C^\infty$, with bounded derivatives of every order for each fixed $\eps,m$.
  Their supports in the $(x,y)$ coordinates lie in a common compact subset of $(0,1)^{d_X+d_Y}$.
\end{romanenum}
\end{lemma}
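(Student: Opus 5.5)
The plan is to mimic \cref{thm:oscillation} with smooth bumps in place of indicators and a sine in place of the square wave $r_m$. Fix nonnegative $C^\infty$ bump densities compactly supported in $(0,1)$, and build product densities from them.

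\textbf{Step 1: the base density.} Choose a $C^\infty$ probability density $\beta$ on $[0,1]$ with compact support in $(0,1)$. Choose it close enough to uniform that $\norm{\beta}_\infty^{s} \le C$; for instance, a mollified indicator of $[\delta,1-\delta]$ with small $\delta$ has sup at most $(1-2\delta)^{-1}$ plus a little. Let $b(v)=\prod_i\beta(v_i)$ be the density of $U$, which is a product measure with density bounded by $C$.

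\textbf{Step 2: signed perturbations.} Pick smooth $a_X$ on $[0,1]^{d_X}$, with $\abs{a_X}\le 1$, satisfying $\int a_X\,\beta^{\otimes d_X}=0$ and $\int a_X^2\beta^{\otimes d_X}>0$. An antisymmetric function of $x^{(1)}-1/2$, with $\beta$ symmetric, works. Define $a_Y$ similarly. Set
\[
\lambda_t(x,y)=\beta^{\otimes d_X}(x)\bigl(1+t a_X(x)\bigr)\,\beta^{\otimes d_Y}(y)\bigl(1+t a_Y(y)\bigr),
\]
which is a product probability density for $t\in[-1,1]$, and let $\Lambda_t$ be the corresponding measure. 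Then
\[
\norm{\Lambda_t-\Lambda_{t'}}_{\TV}=\tfrac12\norm{\lambda_t-\lambda_{t'}}_1 .
\]
Expanding $\lambda_t-\lambda_{t'}=(t-t')\beta\beta(a_X+a_Y)+(t^2-t'^2)\beta\beta a_Xa_Y$ gives a bound of $\tfrac12\abs{t-t'}(2+2)=2\abs{t-t'}$, which is \eqref{eq:smooth-tv}.

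\textbf{Step 3: the family.} Define
\[
p^\eps_m(v)=\beta^{\otimes d_Z}(z)\,\lambda_{\eps\sin(2\pi m z^{(1)})}(x,y),
\qquad
q^\eps(v)=\beta^{\otimes d_Z}(z)\,\beta\beta(x,y)\Bigl(1+\tfrac{\eps^2}{2}a_X(x)a_Y(y)\Bigr).
\]
Here $q^\eps$ is the average of $p^\eps_m$ over a period in $z^{(1)}$, since $\sin^2$ averages to $1/2$. Everything is $C^\infty$ with support in a fixed compact subset of $(0,1)^s$. The densities are bounded by $\norm{b}_\infty(1+\eps)^2\le C(1+\eps)^2$.

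Conditional independence under $P^\eps_m$ and $U$ holds because the conditional law is the product $\Lambda_t$. Dependence under $Q^\eps$ holds because $a_Xa_Y$ is not almost surely zero on the support of $\beta\beta$, so the conditional joint density does not factor. The $X$-marginal is $\beta(1+\text{const}\cdot 0)$ by the centering of $a_Y$. The kernel densities are smooth with bounded derivatives for fixed $\eps,m$, since $\sin(2\pi m z^{(1)})$ is smooth and $(x,y)$ supports are common. Conditional versions are taken to be $\Lambda_0$ off the support of $\beta^{\otimes d_Z}$.

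\textbf{Step 4: weak-$*$ convergence.} This is the main obstacle, though a mild one. Write
\[
p^\eps_m-q^\eps=\beta^{\otimes d_Z}\beta\beta\Bigl[\eps\sin(\cdot)(a_X+a_Y)+\eps^2\bigl(\sin^2(\cdot)-\tfrac12\bigr)a_Xa_Y\Bigr],
\]
with $\sin^2-\tfrac12=-\tfrac12\cos(4\pi m z^{(1)})$. Both oscillating factors $g_m(t)=\sin(2\pi mt)$ and $\cos(4\pi mt)$ have antiderivatives bounded by $O(1/m)$ uniformly over intervals. So \cref{thm:oscillation-1d} applies, with $G=F$ times the remaining bounded factor in $L^1(\cU)$, giving $P^\eps_mF\to Q^\eps F$.
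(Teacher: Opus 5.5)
Your proposal is correct and is essentially the paper's proof: the paper fixes the concrete choices $a_X=\sin(2\pi x^{(1)})$ and a symmetric bump $\beta_1$ with $\norm{\beta_1}_\infty\le C^{1/s}$, and otherwise uses the same $p^\eps_m$ and $q^\eps$, the same expansion of $\lambda_t-\lambda_{t'}$ for \eqref{eq:smooth-tv}, and the same identity $\sin^2(2\pi m t)=\frac12\bigl(1-\cos(4\pi m t)\bigr)$ feeding \cref{thm:oscillation-1d}. One slip to delete: do not reset the kernels to $\Lambda_0$ off $\supp\beta^{\otimes d_Z}$. The factor $\sin(2\pi m z^{(1)})$ for $P^\eps_m$, and the factor $1+\frac{\eps^2}{2}a_Xa_Y$ for $Q^\eps$, are not trivial at the boundary of that support, so the reset introduces a jump in $z$. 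That jump breaks the smoothness in property (iii) and the everywhere form of property (ii), which the continuity argument of \cref{thm:regular} relies on. Instead, use the formulas for every $z$; this is a legitimate version because the conditional law may be chosen arbitrarily on a $Z$-null set.
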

\begin{proof}
Choose a $C^\infty$ probability density $\beta_1$ with compact support in $(0,1)$,
symmetric about $1/2$, and satisfying $\norm{\beta_1}_\infty \le C^{1/s}$,
by normalizing symmetric smooth cutoffs that approach one on $(0,1)$.
Define
a $C^\infty$ probability density with compact support in $(0,1)^s$
by
$\beta(v)=\beta_X(x)\beta_Y(y)\beta_Z(z)$, where
\[
  \beta_X(x)=\prod_{i = 1}^{d_X}\beta_1(x^{(i)}),
  \qquad
  \beta_Y(y)=\prod_{i = 1}^{d_Y}\beta_1(y^{(i)}),
  \qquad
  \beta_Z(z)=\prod_{i = 1}^{d_Z}\beta_1(z^{(i)})
.\]
Let $U$ be the product law with density $\beta$; its density is bounded by $\norm{\beta_1}_\infty^s\le C$.

Let $a_X(x) = \sin(2\pi x^{(1)})$ and $a_Y(y) = \sin(2\pi y^{(1)})$.
For $t \in [-1, 1]$, let $\Lambda_t$ be the probability measure on $[0,1]^{d_X+d_Y}$ with density
\begin{equation}
  \lambda_t(x,y)=\beta_X(x)\bigl(1+t\,a_X(x)\bigr)\,\beta_Y(y)\bigl(1+t\,a_Y(y)\bigr)
  \label{eq:Lambda}
.\end{equation}
Because $\beta_1$ is symmetric about $1/2$ and $\sin(2\pi\,\cdot)$ is antisymmetric about $1/2$,
we have
$\int \beta_X a_X \ud x = \int_0^1 \beta_1(t) \sin(2\pi t) \ud t=0$;
likewise, $\int \beta_Y a_Y \ud y = 0$.
Hence for $\abs{t} \le 1$, the nonnegative functions $\beta_X(1+ta_X)$ and $\beta_Y(1+ta_Y)$ are probability densities,
and $\Lambda_t$ is a product probability measure.

Let $r_m(z)=\sin(2\pi m z^{(1)})$,
and define
\begin{align*}
     p^\eps_m(v)
  &= \beta(v) \bigl( 1 + \eps \, r_m(z) a_X(x) \bigr)
     \bigl( 1 + \eps \, r_m(z) a_Y(y) \bigr)
   = \beta_Z(z) \, \lambda_{\eps r_m(z)}(x, y)
,\\
     q^\eps(v)
  &= \beta(v) \Bigl( 1 + \frac{\eps^2}{2} a_X(x) a_Y(y) \Bigr)
   = \beta_Z(z) \lambda_0(x, y)
     \Bigl( 1 + \frac{\eps^2}{2} a_X(x) a_Y(y) \Bigr)
.\end{align*}
As $\eps^2 \le 1$,
each of these is nonnegative,
and integrates to one.
Thus they are $C^\infty$ probability densities
with the same support as $\beta$.
That $X \indep_{P^\eps_m} Y \mid Z$ is immediate from the form of $\lambda_t$;
under $Q^\eps$, the conditional marginals have densities $\beta_X$ and $\beta_Y$,
while their product differs from the conditional joint density on
the positive-measure set where $\beta_X a_X \beta_Y a_Y\ne0$.
Thus $X\nindep Y\mid Z$ under $Q^\eps$,
and we have \cref{def:oscillating}\labelcref{item:of-ci}.

Also, $\norm{p_m^\eps}_\infty \le (1 + \eps)^2 \norm{\beta}_\infty$,
and $\norm{q^\eps}_\infty \le (1 + \eps^2 / 2) \norm{\beta}_\infty \le (1+\eps)^2 \norm\beta_\infty$.
Since $\norm{\beta}_\infty = \norm{\beta_1}_\infty^s\le C$,
this shows property \labelcref{item:sf-density}
and \cref{def:oscillating}\labelcref{item:of-bounds}.

The factorizations $\beta(v)=\beta_Z(z)\lambda_0(x,y)$ and $p^\eps_m(v)=\beta_Z(z)\lambda_{\eps r_m(z)}(x,y)$ give the conditional laws in property \labelcref{item:sf-kernel}.
The other part of \labelcref{item:sf-kernel}, the TV bound, follows from
\begin{align*}
       \norm{\Lambda_t-\Lambda_{t'}}_{\TV}
  &  = \frac{\abs{t-t'}}2 \int \beta_X(x) \beta_Y(y) \,\bigl|
          a_X(x) + a_Y(y)
        + (t+t') a_X(x) a_Y(y)
        \bigr| \ud x \ud y
\\&\le \frac{\abs{t-t'}}{2} \bigl( 2 + \abs{t+t'} \bigr)
   \le 2 \abs{t - t'}
.\end{align*}

Under $Q^\eps$, the conditional density can be taken to be
$\lambda_0(x,y)\bigl(1+\frac{\eps^2}{2}a_X(x)a_Y(y)\bigr)$, independently of $z$.
This density, $\lambda_0(x,y)$, and $\lambda_{\eps r_m(z)}(x,y)$ are $C^\infty$ in $(x,y,z)$,
with bounded derivatives of every order for each fixed $\eps,m$,
and all are supported in $(\supp\beta_1)^{d_X+d_Y}$ in the $(x,y)$ coordinates.
This proves property \labelcref{item:sf-smooth-kernel}.

It remains to show \cref{def:oscillating}\labelcref{item:of-weakstar}.
Using $r_m^2=\frac12(1-c_m)$ with $c_m(z)=\cos(4\pi mz^{(1)})$,
\[
  p^\eps_m - q^\eps
  = \beta \Bigl[
    \eps \, (a_X + a_Y) \, r_m
  - \frac{\eps^2}{2} \, a_X a_Y \, c_m
  \Bigr]
.\]
For $F\in L^1(\cU)$, therefore, with $G_1=F\beta(a_X+a_Y)$ and $G_2=F\beta a_Xa_Y$ in $L^1(\cU)$,
\[
  P^\eps_mF-Q^\eps F=\eps\int G_1r_m-\frac{\eps^2}2\int G_2c_m.
\]
Both terms tend to zero as $m \to \infty$ by \cref{thm:oscillation-1d}, since for all $0\le a<b\leq1$
\[
  \Bigl|\int_a^b\sin(2\pi mt)\ud t\Bigr|\le\frac1{\pi m},
  \qquad
  \Bigl|\int_a^b\cos(4\pi mt)\ud t\Bigr|\le\frac1{2\pi m}
.\qedhere \]
\end{proof}

The parameter $\eps$ controls the amplitude of the conditional oscillation: the conditional laws of $P^\eps_m$ stay within $2\eps$ of the fixed product $\Lambda_0$ in total variation, uniformly in $m$ and $z$, while the frequency $m$ remains free.

\begin{theorem}
\label{thm:regular}
Let $M \in (0, \infty]$
and $L\in\bigl((2M)^{-s},\infty\bigr]$, interpreting $\infty^{-s}$ as $0$.
For every sequence of tests $(\psi_n)$
there is $P_* \in \cP_{M,L}$ with
\[
  \limsup_{n\to\infty}P_*^n\psi_n\ge\inf_{Q\in\cQ_{M,L}}\limsup_{n\to\infty}Q^n\psi_n,
\]
and moreover
\begin{romanenum}
  \item\label{item:reg-density} the density of $P_*$ is continuous on $\R^s$, and $C^\infty$ off a hyperplane $\{z^{(1)}=c\}$ for some $c\in\R$;
  \item\label{item:reg-kernel} the conditional law of $(X,Y)$ given $Z$ under $P_*$ has a version $z\mapsto\kappa^*(z)$ that is uniformly continuous in total variation on $\R^{d_Z}$.
\end{romanenum}
In particular, a test sequence with pointwise asymptotic level $\alpha$ merely over the laws in $\cP_{M,L}$ having these two properties still satisfies $\inf_{Q\in\cQ_{M,L}}\limsup_nQ^n\psi_n\le\alpha$.
Both conclusions remain valid when the alternative class is restricted to laws with $C^\infty$ densities.
\end{theorem}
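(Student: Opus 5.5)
The plan is to rerun the proof of \cref{thm:main}, but to feed \cref{thm:diagonal} the smooth families of \cref{thm:smooth-oscillation}, with amplitudes $\eps_k\searrow0$ instead of the piecewise constant family.

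\emph{Choice of parameters.} First choose a box $G$ with $\overline G\subset(-M,M)^s$, a constant $C>1$ and an $\eta\in(0,1]$ such that $C(1+\eta)^2/\vol(G)\le L$. This is possible because $L>(2M)^{-s}$: take $C$ and $\eta$ close to their lower limits and $\vol(G)$ close to $(2M)^s$. Set $L_{\mathrm{fam}}=C(1+\eta)^2$ and $\theta=(1+\eta)^{-2}$. Let $U$ be the product law of \cref{thm:smooth-oscillation}, whose density is bounded by $C=\theta L_{\mathrm{fam}}$. At stage $k$ use the family $((P^{\eps_k}_m),Q^{\eps_k})$ with $\eps_k=\eta/k$; its density bound is at most $L_{\mathrm{fam}}$.

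\cref{thm:diagonal} then produces $P_*\in\cP_{M,L}$ with $\limsup_nP_*^n\psi_n\ge\rho$, where $\rho$ is the infimum over the alternatives. Every stage law $\widetilde Q_k$ in \eqref{eq:stage-definitions} is a finite sum of pushforwards of $C^\infty$ densities with compact support in the open slabs, so it has a $C^\infty$ density. By the remark after \cref{thm:diagonal}, $\rho$ may therefore be taken over $C^\infty$ alternatives, which gives the last sentence of the theorem. The ``in particular'' statement follows exactly as in \cref{thm:main}.

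\emph{Property \labelcref{item:reg-density}.} Let $c=a+\ell$ be the right end of the first $Z$-interval of $G$, where the slabs accumulate. On each slab $B_k$ the density of $P_*$ is a $C^\infty$ function compactly supported inside $B_k$, hence it vanishes near $\partial B_k$. Any point off the hyperplane $\{z^{(1)}=c\}$ has a neighbourhood meeting only finitely many slabs, so the density is $C^\infty$ there.

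At the hyperplane itself, the density on $B_k$ is at most $w_k L_{\mathrm{fam}}/\vol(B_k)$. Since $w_k\le 2(1-\theta)4^{-k}$ and $\vol(B_k)=(1-\theta)2^{-k}\vol(G)$, this bound is $O(2^{-k})\to0$. Hence the density tends to $0$ approaching the hyperplane, and it is continuous there as well.

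\emph{Property \labelcref{item:reg-kernel}.} This is the step needing the most care: the conditional law must be defined even where $f_Z=0$, and a careless version would jump at the edges of $\supp\beta_Z$. All slabs share the same affine maps in the $x$ and $y$ blocks; let $T$ denote that common map on $(x,y)$. Define
\[
  \kappa^*(z)=T\push\Lambda_{t(z)},
\]
where $t(z)=\eps_k\sin\bigl(2\pi m_k u_k(z)\bigr)$ when $z^{(1)}$ lies in the $k$-th slab interval, with $u_k(z)\in[0,1]$ the rescaled first coordinate, and $t(z)=0$ otherwise, including on $B_0$ and outside $G$.

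By \cref{thm:smooth-oscillation}\labelcref{item:sf-kernel}, this is a version of the conditional law on every slab, including where $\beta_Z$ vanishes, since any version is allowed on a $\mu_Z$-null set. The function $t$ is continuous: $\sin(2\pi m u)$ vanishes at $u=0$ and $u=1$, so $t$ matches across slab boundaries. It also satisfies $\abs{t}\le\eps_k\to0$ as $z^{(1)}\to c$, and it is constant outside a compact set. A continuous function that is constant outside a compact set is uniformly continuous. Finally, \eqref{eq:smooth-tv} gives $\norm{\kappa^*(z)-\kappa^*(z')}_{\TV}\le2\abs{t(z)-t(z')}$, since TV is preserved under the bijection $T$. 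This yields uniform continuity of $\kappa^*$ in total variation.

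The main obstacle is this choice of version and the verification that $t$ is continuous at the accumulation hyperplane. Both reduce to $\eps_k\to0$ and to the sine factor vanishing at the slab endpoints.
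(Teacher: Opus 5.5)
Your proposal is correct and follows the paper's proof essentially step for step: smooth families with amplitudes tending to zero fed into \cref{thm:diagonal}, local finiteness of the slabs off $\{z^{(1)}=a+\ell\}$ together with the $O(2^{-k})$ bound on the slab densities, and the version $\kappa^*(z)=\tau\push\Lambda_{t(z)}$ with $t$ vanishing at the slab endpoints (your choice $\eps_k=\eta/k$ in place of the paper's $\eps_0 2^{-k}$ is immaterial). One small imprecision: for $d_Z\ge2$, $t$ is not constant outside a compact subset of $\R^{d_Z}$, but it depends only on $z^{(1)}$ and is compactly supported as a function of that coordinate, which is how the paper gets uniform continuity.
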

\begin{proof}
Choose $C>1$, $\eps_0\in(0,1]$, and a box $G$ with $\overline G\subset(-M,M)^s$ such that
\[
  L':=\frac{C(1+\eps_0)^2}{\vol(G)}\le L.
\]
This is possible by taking $C$ close to one, $\eps_0$ close to zero, and $\vol(G)$ close to $(2M)^s$;
for $M=\infty$, take $G$ sufficiently large.
Write $(a,a+\ell)$ for the first $Z$-coordinate interval of $G$.
Apply \cref{thm:smooth-oscillation} with this $C$ to obtain $U$, $(\Lambda_t)$, and the oscillating families for all $\eps\in(0,1]$.
Put $\eps_k=\eps_0 2^{-k}$, $H=C(1+\eps_0)^2$, and $\theta=(1+\eps_0)^{-2}$.
By \cref{thm:smooth-oscillation}\labelcref{item:sf-density}, the density of $U$ is bounded by $C=\theta H$,
and the family $((P^{\eps_k}_m),Q^{\eps_k})$ has density bound at most $H$.
Since $U$ is a product law, \cref{thm:diagonal} applies with density bound $H$ and these families at the respective stages.
It gives $P_*\in\cP_{M,L'}\subseteq\cP_{M,L}$ and sample sizes $n_k$ with
\[
  \liminf_k P_*^{n_k}\psi_{n_k}
  \ge\inf_{Q\in\cQ_{M,L'}}\limsup_nQ^n\psi_n
  \ge\inf_{Q\in\cQ_{M,L}}\limsup_nQ^n\psi_n
.\]
Again 
$\limsup_{n\to\infty} P_*^n \psi_n \ge \liminf_k P_*^{n_k} \psi_{n_k}$.

By \cref{thm:smooth-oscillation}\labelcref{item:sf-density},
every possible stage alternative is a finite mixture of $C^\infty$ densities with compact support inside the slabs,
and hence itself $C^\infty$.
As in the observation following the proof of \cref{thm:diagonal},
this means that the infimum over alternatives
can be restricted to $C^\infty$ densities.

It remains to check the two regularity properties for the null,
which will similarly allow restricting level only over laws with these properties.
Recall that $P_* = \sum_{k \ge 0} w_k R_k$,
with $R_0 = (T_{B_0}) \push U$,
$R_k = (T_{B_k}) \push P^{\eps_k}_{m_k}$,
and $w_k \le 2(1-\theta)4^{-k}$ for $k\ge1$.

For \labelcref{item:reg-density}, take $c = a + \ell$.
Write $f_0$ for the density of $U$,
and $f_k$ for that of $P^{\eps_k}_{m_k}$ for $k \ge 1$.
The density of $R_k$
is zero outside $B_k$,
and $r_k(v) = f_k(T_{B_k}^{-1} v) / \vol(B_k)$ within $B_k$.
By \cref{thm:smooth-oscillation}\labelcref{item:sf-density}, $f_k$ is $C^\infty$ with compact support in $(0,1)^s$, so $r_k$ is $C^\infty$ on $\R^s$ with compact support in $B_k$.
We also have $\norm{r_k}_\infty \le \norm{f_k}_\infty / \vol(B_k) \le 2^{k} H / \bigl((1-\theta)\vol(G)\bigr)$ for $k\ge1$.
Since $w_k \le 2(1-\theta)4^{-k}$ for $k \ge 1$,
$w_k \norm{r_k}_\infty$ is at most $2^{1-k}H/\vol(G)$;
as $w_0 \norm{r_0}_\infty$ is also finite,
the sum $\sum_{k \ge 0} w_k \norm{r_k}_\infty < \infty$.
The series $p_* = \sum_{k \ge 0} w_k r_k$ therefore converges uniformly,
and defines a continuous density.
A point with $z^{(1)}\neq a+\ell$ has a neighbourhood meeting only finitely many $B_k$, on which $p_*$ is a finite sum of $C^\infty$ functions.

For \labelcref{item:reg-kernel},
notice every $B_k$ has the same $X$ and $Y$ sides,
so each $T_{B_k}$ acts on the $(x,y)$ coordinates by the same affine map $\tau$, given by the corresponding coordinates of $T_G$.
For $k\ge1$, it acts on $z$ by an affine bijection whose inverse sends
$z^{(1)}$ to $(z^{(1)}-b_{k-1})/(b_k-b_{k-1})$,
with the endpoints $b_k$ defined in \cref{thm:diagonal}.
By \cref{thm:smooth-oscillation}\labelcref{item:sf-kernel},
a version of the conditional law of $(X,Y)$ given $Z=z$ under $R_k$
is therefore given by $\tau \push \Lambda_{t_k(z)}$, where
\[
  t_k(z) = \eps_k \sin\Bigl( 2\pi m_k \frac{z^{(1)}-b_{k-1}}{b_k-b_{k-1}} \Bigr)
  \quad\text{for }k \ge 1,
  \qquad
  t_0 \equiv 0
.\]
Since the slabs have disjoint $Z$ projections, conditioning on $Z$ identifies the component.
Thus a version of the conditional law under $P_*$ is $\kappa^*(z)=\tau\push\Lambda_{t(z)}$, where $t(z)=t_k(z)$ when $b_{k-1}<z^{(1)}\leq b_k$ for some $k\geq1$, and $t(z)=0$ otherwise.
The function $t$ depends on $z$ only through $z^{(1)}$,
is continuous on each slab,
and vanishes at both ends of each slab
because $\sin(2 \pi m_k \cdot 0) = \sin(2 \pi m_k \cdot 1) = 0$;
$t$ is therefore continuous away from $\{z^{(1)}=a+\ell\}$.
It is continuous at $z^{(1)} = a+\ell$ as well,
since $\abs t \le \eps_k = \eps_0 2^{-k}$ on the $k$th slab,
and it is zero outside $\{b_0<z^{(1)}<a+\ell\}$.
As a function of the single coordinate $z^{(1)}$,
$t$ is continuous and compactly supported,
hence uniformly continuous.
The same holds when it is viewed as a function on $\R^{d_Z}$.
By \eqref{eq:smooth-tv}, and since $\tau\push$ preserves total variation,
$\kappa^*$ is uniformly continuous.
\end{proof}

The stage alternatives also admit conditional densities that are smooth in $(x,y,z)$, with bounded derivatives of every fixed order.
By \cref{thm:smooth-oscillation}\labelcref{item:sf-smooth-kernel}, each component has such a conditional density.
Keep it on the compact support of the component's $Z$ marginal, and interpolate to the common baseline $\tau\push\Lambda_0$ using smooth cutoffs supported inside its slab.
The supports are disjoint and only finitely many components are involved, so these interpolations define a probability kernel everywhere and agree with the required conditional law almost surely.
The conditional densities have a common compact support in $(x,y)$ by \cref{thm:smooth-oscillation}\labelcref{item:sf-smooth-kernel}, so their bounded first derivatives in $z$ give uniformly continuous conditional laws in total variation.

The regular witness is again available for all $L$ such that $\cQ_{M,L}$ is nonempty.

Uniform continuity gives the existence of a modulus of continuity for each law;
we do \emph{not} have a common modulus over the model.
Instead, the frequencies $m_k$ are chosen only after the test and the sample sizes are known,
consistent with positive results under quantitative regularity assumptions.

\section{A black-box proof from finite-sample hardness}
\label{sec:black-box}

We can instead prove \cref{thm:main-intro}
based on the finite-sample theorem of \textcite{shahPeters2020};
this approach, though, does not allow the stronger results of
\cref{thm:main,thm:regular}.

\begin{fact}[Shah--Peters domination]
\label{thm:sp-domination}
For every $n$, $M\in(0,\infty]$, test $\phi_n$, and $Q\in\cQ_M$,
\[
  Q^n \phi_n \le \sup_{P \in \cP_M} P^n \phi_n
.\]
\end{fact}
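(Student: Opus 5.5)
The plan is to show that every alternative's rejection probability at a fixed $n$ can be approximated by rejection probabilities of nulls. Concretely, for fixed $n$, $\phi_n$, $Q\in\cQ_M$ and $\varepsilon>0$, I will construct nulls $P_m\in\cP_M$ with $\liminf_m P_m^n\phi_n\ge Q^n\phi_n-\varepsilon$. This mirrors the oscillating-family idea of \cref{sec:oscillation}: instead of one fixed dependent $Q^\eta$, we oscillate between several product conditionals so as to reproduce an arbitrary dependent law in the weak-$*$ limit.

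\emph{Step 1: reduce to a grid density.}
By \eqref{eq:rej-tv}, $Q\mapsto Q^n\phi_n$ is $n$-Lipschitz in total variation. Choose a box $G$ with $\overline G\subset(-M,M)^s$ carrying all but a tiny fraction of $Q$'s mass. Then approximate $q$ in $L^1$ by a density $q'$ that is a positive combination of indicators of cells of a fine rectangular grid on $G$, normalized to a probability density; simple functions on boxes are dense in $L^1$. It therefore suffices to handle $Q'$ with $\norm{Q-Q'}_{\TV}\le\varepsilon/n$, and $Q'$ still lies in $\cE_M$.

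\emph{Step 2: write $Q'$ as a $z$-mixture of products.}
On each $Z$-cell $C$ of the grid, $q'$ is constant in $z$. Its conditional law of $(X,Y)$ is a finite combination of $(X,Y)$-grid cells, and each cell indicator is a product $\indic_{A}(x)\indic_{B}(y)$. So on $C$ we can write
\[
q'(x,y,z)=f'_Z(z)\sum_c\pi_{C,c}\,u_c(x)v_c(y),
\]
with weights $\pi_{C,c}\ge0$ summing to one and bounded product densities $u_c\otimes v_c$ (normalized cell indicators).

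\emph{Step 3: build the oscillating nulls.}
Subdivide each $Z$-cell $C$ in the $z^{(1)}$ direction into $m$ equal strips. Split each strip, again along $z^{(1)}$, into consecutive pieces of relative lengths $\pi_{C,c}$. On the piece labelled $c$, set
\[
p_m(x,y,z)=f'_Z(z)\,u_c(x)v_c(y).
\]
Then $P_m$ has $Z$-marginal $f'_Z$. Its conditional law given each $z$ is a product, so $P_m\in\cP_M$. Its density is bounded by $\norm{f'_Z}_\infty\max_c\norm{u_cv_c}_\infty$, uniformly in $m$.

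\emph{Step 4: weak-$*$ convergence.}
Show $p_m\to q'$ weak-$*$. For each $C$ and $c$, the indicator of the label-$c$ pieces in $z^{(1)}$ minus $\pi_{C,c}$ has antiderivative oscillating within a window of length $O(1/m)$. Hence $p_m-q'$ is a finite sum of terms of the form $h(v)\,g_m(z^{(1)})$ with $h$ bounded and $\sup_{a<b}\bigl|\int_a^b g_m\bigr|\to0$. \Cref{thm:oscillation-1d}, applied after rescaling $G$ to $\cU$ by $T_G$, then gives $P_mF\to Q'F$ for every $F\in L^1$.

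\emph{Step 5: conclude.}
With the uniform density bound, \cref{thm:tensorization} (again after transporting by $T_G$) gives $P_m^n\phi_n\to Q'^n\phi_n$. Therefore
\[
\sup_{P\in\cP_M}P^n\phi_n\;\ge\;Q'^n\phi_n\;\ge\;Q^n\phi_n-\varepsilon,
\]
and letting $\varepsilon\to0$ proves the fact.

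The main obstacle I expect is Step 4: verifying that the multi-label interleaving satisfies the hypotheses of \cref{thm:oscillation-1d} uniformly over cells, and handling cells where $f'_Z$ vanishes. I would handle this by working one $Z$-cell at a time, where everything is a finite sum, and simply dropping zero-mass cells. Step 1 also needs a little care when $M=\infty$ (the truncation to $G$) and when checking that $Q'$ stays absolutely continuous with support in $(-M,M)^s$.
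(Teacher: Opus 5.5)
Your argument is correct, but it takes a genuinely different route from the paper's. The paper proves this fact by a two-line reduction to the literature. With $a=\sup_{P\in\cP_M}P^n\phi_n<1$, the test $\phi_n$ has level $\alpha$ over $\cP_M$ for every $\alpha\in(a,1)$. Theorem~2 of \textcite{shahPeters2020} then gives $Q^n\phi_n\le\alpha$, and one lets $\alpha\searrow a$.

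You instead derive the domination from the paper's own oscillation machinery. You reduce in total variation to a grid-step law $Q'$, decompose $Q'$ cellwise as a $Z$-mixture of product conditionals, and interleave those products along $z^{(1)}$. That interleaving is the multi-label analogue of the construction in the proof of \cref{thm:mixtures}, and \cref{thm:oscillation-1d,thm:tensorization} close it out. The details you flag do go through:
\begin{itemize}
\item cell averages of the truncated, renormalized $q$ give a nonnegative $Q'$ of mass one converging in $L^1$;
\item on each $Z$-cell, the centred label indicator $g_m$ has $\sup_{a<b}\abs{\int_a^b g_m(t)\ud t}$ at most one strip length;
\item cells with $f'_Z=0$ can simply be dropped;
\item you never need $Q'$ itself to be conditionally dependent.
\end{itemize}

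The citation buys brevity. Your route buys self-containment and a stronger conclusion. Your nulls $P_m$ are built from $Q'$ alone, without reference to $\phi_n$, and satisfy $P_m^n\to Q'^n$ setwise. This is exactly the oscillating-family guarantee that the paper remarks the Shah--Peters null lacks, since that null may depend on the test. In effect, you show that finite-sample domination follows from weak-$*$ approximability of step densities by nulls with bounded densities.

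The one cost is that the density bound of $P_m$ can greatly exceed $\norm{q'}_\infty$, because each conditional component is a single normalized cell indicator. That is harmless for $\cP_M$. It is, however, why the paper uses the near-uniform products of \cref{thm:oscillation} when a sharp common density bound is needed.
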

\begin{proof}
Call the right-hand side $a$; we may assume $a<1$.  For each $\alpha\in(a,1)$, $\phi_n$ has level $\alpha$ over $\cP_M$, so Theorem 2 of \textcite{shahPeters2020} gives $Q^n \phi_n \le \alpha$.  Let $\alpha \searrow a$.
\end{proof}

Shah and Peters couple observations from null and alternative laws to be close in Euclidean distance, and control a measurable rejection region by approximation with finitely many boxes.
The null law supplied by \cref{thm:sp-domination} may depend on the test, sample size, and desired error,
which is weaker than the guarantees of an oscillating family (\cref{thm:tensorization}).

To use \cref{thm:sp-domination} inside a mixture, we again convert a test of the mixture into a test of only the active component.

\begin{lemma}[Localization]
\label{thm:localization}
Let $\mu$ be a probability measure on $\R^s$, $w\in(0,1)$, and $\psi_n$ an $n$-sample test.  There is an $n$-sample test $\phi_n$ such that
\[
  \bigl((1-w) \mu+wS\bigr)^n\psi_n=S^n\phi_n
  \qquad\text{for every probability measure $S$ on $\R^s$.}
\]
\end{lemma}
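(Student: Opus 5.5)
The plan is to expand the product measure $\bigl((1-w)\mu+wS\bigr)^n$ by which coordinates come from $S$, exactly as in the proof of \cref{thm:local-replacement}\labelcref{item:lr-conv}, and then to realize that expansion as the action of a single randomized test on an $S$-sample. For a subset $A\subseteq[n]$ we have
\[
  \bigl((1-w)\mu+wS\bigr)^n\psi_n
  =\sum_{A\subseteq[n]} w^{\abs A}(1-w)^{n-\abs A}\, S^{\abs A}\mu^{n-\abs A}\psi_n,
\]
where the $S$ coordinates are placed at the positions in $A$ and the $\mu$ coordinates at the positions in $A^c$. In words, draw $n$ labels $\xi_i\sim\mathrm{Bernoulli}(w)$ independently; for each $i$ with $\xi_i=1$ use an $S$-sample, and for each $i$ with $\xi_i=0$ use an independent $\mu$-sample.

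The test $\phi_n$ simulates this mixing internally, using its own randomization rather than the data. Given $v_{1:n}\sim S^n$, we define
\[
  \phi_n(v_{1:n})=\sum_{A\subseteq[n]} w^{\abs A}(1-w)^{n-\abs A}\int \psi_n\bigl(u^{A}(v_{1:\abs A},v'_{A^c})\bigr)\ud\mu^{n-\abs A}(v'_{A^c}).
\]
Here $u^A$ inserts the first $\abs A$ data points, in increasing order, into the positions of $A$, and fills the remaining positions with the auxiliary $\mu$-distributed points $v'$. The unused data points $v_{\abs A+1:n}$ are simply ignored.

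It remains to check the required properties:
\begin{itemize}
  \item \emph{Range.} $\phi_n$ takes values in $[0,1]$, since it is a convex combination of averages of $\psi_n$.
  \item \emph{Measurability.} Each integral $\int\psi_n(\cdots)\ud\mu^{n-\abs A}$ is a measurable function of $v_{1:\abs A}$ by Tonelli, because $\psi_n$ is bounded and jointly measurable. Hence $\phi_n$ is measurable.
  \item \emph{The identity.} Integrate $\phi_n$ against $S^n$ and apply Fubini to each term. The ignored coordinates integrate out to one, and the term indexed by $A$ becomes $S^{\abs A}\mu^{n-\abs A}\psi_n$ with the $S$ coordinates in the positions of $A$. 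This matches the expansion above exactly, for every probability measure $S$.
\end{itemize}
Crucially, $\phi_n$ depends only on $\mu$, $w$ and $\psi_n$, and not on $S$, which is what "for every $S$" requires.

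No real obstacle arises; the main care point is the bookkeeping in the Fubini step. Because $S^n$ is exchangeable, assigning the first $\abs A$ data points to the positions of $A$ is legitimate. If one wants an $\{0,1\}$-valued test with auxiliary randomness in the sense of the footnote, the same construction goes through: use $U$ to generate the labels $\xi$ and the $\mu$-samples, for instance via a Borel isomorphism of $[0,1]$ with $[0,1]^\infty$.
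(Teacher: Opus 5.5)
Your proposal is correct and is essentially the paper's proof: both define $\phi_n$ as the Bernoulli($w$)-weighted average over retained subsets $A\subseteq[n]$ of $\psi_n$, with the remaining positions filled by independent $\mu$-draws, and both verify the identity by expanding the product measure and applying Fubini. The only cosmetic difference is that the paper keeps a retained observation $v_i$ in position $i$, whereas you place the first $\abs{A}$ observations into the positions of $A$ and discard the rest; this is equally valid because $S^n$ is a product measure.
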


The test $\phi_n$ independently retains each input observation with probability $w$, replaces the others by draws from $\mu$, and averages $\psi_n$ over this randomization.

\begin{proof}
For $\chi\in\{0,1\}^n$ and $v_{1:n},a_{1:n}\in(\R^s)^n$, let $W_i(\chi,v,a)=v_i$ if $\chi_i=1$ and $W_i(\chi,v,a)=a_i$ otherwise.
Letting $\abs\chi = \sum_i \chi_i$,
define
\[
  \phi_n(v_{1:n})
  =\sum_{\chi\in\{0,1\}^n} w^{\abs\chi}(1-w)^{n-\abs\chi}
   \int\psi_n\bigl(W_1(\chi,v,a),\dots,W_n(\chi,v,a)\bigr)\,
   \ud \mu^n(a)
.\]
Each integrand is bounded and jointly measurable, so $\phi_n$ is a measurable map into $[0,1]$.
If $\chi_i\sim\operatorname{Bernoulli}(w)$, $a_i\sim\mu$, and $v_i\sim S$ are all independent, then the $W_i$ are independent with law $(1-w)\mu+wS$, and $\phi_n(v_{1:n})$ is the conditional expectation of $\psi_n(W_{1:n})$ given $v_{1:n}$. The claim follows by integrating over $v_{1:n}$.
\end{proof}

\begin{proposition}[Black-box local replacement]
\label{thm:black-box-local}
Let $B$ be a box,
$Q \in \cQ_\infty$ with $\supp Q \subseteq B$,
$\mu$ a probability measure on $\R^s$,
and $w \in (0,1)$.
For every $n$, test $\psi_n$, and $\eta>0$,
there is a conditionally independent $R \in \cE_\infty$
with $\supp R \subseteq B$ such that
\[
  \bigl((1-w) \mu + w R\bigr)^n \psi_n
  \ge
  \bigl((1-w) \mu + w Q\bigr)^n \psi_n - \eta
.\]
\end{proposition}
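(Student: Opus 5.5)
The plan is to reduce the mixture statement to a statement about the single component, using \cref{thm:localization}, and then to apply the Shah--Peters domination of \cref{thm:sp-domination} after an affine change of coordinates that carries $B$ onto a cube. Fix $n$, $\psi_n$ and $\eta>0$. By \cref{thm:localization}, applied with $\mu$, $w$ and $\psi_n$, there is an $n$-sample test $\phi_n$ such that
\[
  \bigl((1-w)\mu+wS\bigr)^n\psi_n = S^n\phi_n
  \qquad\text{for every probability measure } S.
\]
It therefore suffices to find a conditionally independent $R\in\cE_\infty$ with $\supp R\subseteq B$ and $R^n\phi_n\ge Q^n\phi_n-\eta$. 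The test $\phi_n$ depends on $\mu$ and $w$, but once it is fixed the problem involves $Q$ alone.

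Next I will shrink the box and move to a cube. Since $\supp Q$ is closed and contained in the bounded box $B$, it is compact. So there is a box $B'$ with $\supp Q\subseteq B'$ and $\overline{B'}\subset B$; I will work in $B'$ so that the eventual $R$ has support in $\overline{B'}\subseteq B$. Let $A:(-1,1)^s\to B'$ be the coordinatewise affine bijection, and note that it acts separately on the $x$, $y$ and $z$ blocks. As observed before \cref{thm:gluing}, pushing forward by $A$ or $A^{-1}$ preserves absolute continuity, and it preserves conditional independence and conditional dependence alike. Set $Q'=(A^{-1})\push Q$. This law is absolutely continuous, its support is a compact subset of $(-1,1)^s$, and it is conditionally dependent, so $Q'\in\cQ_1$. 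Define the test $\phi'_n(v_{1:n})=\phi_n(Av_1,\dots,Av_n)$ on $(-1,1)^s$, extended arbitrarily (say by $0$) elsewhere; this is measurable.

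Now apply \cref{thm:sp-domination} with $M=1$ to $\phi'_n$ and $Q'$. It gives $Q'^n\phi'_n\le\sup_{P\in\cP_1}P^n\phi'_n$, so there is some $P\in\cP_1$ with $P^n\phi'_n\ge Q'^n\phi'_n-\eta$. Put $R=A\push P$. This law is absolutely continuous, conditionally independent, and supported in $\overline{B'}\subseteq B$. By change of variables, $R^n\phi_n=P^n\phi'_n$ and $Q^n\phi_n=Q'^n\phi'_n$. Combining these with the localization identity gives the claim.

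I expect the only delicate step to be the bookkeeping around supports and domains. Membership in $\cE_1$ requires support in the open cube, while the statement asks for $\supp R\subseteq B$ with $\supp$ the closed support. This is exactly why I pass to the strictly smaller box $B'$. The test $\phi'_n$ must be defined on all of $(\R^s)^n$, but its values off $((-1,1)^s)^n$ are irrelevant because every law involved is concentrated on that set.
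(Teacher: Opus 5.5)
Your proof is correct and follows the paper's argument: use \cref{thm:localization} to reduce to a single component, transport to $(-1,1)^s$ by a coordinatewise affine map, apply \cref{thm:sp-domination} with $M=1$, and push the resulting null back. The only difference is that you first pass to a smaller box $B'$ with $\overline{B'}\subset B$. The paper omits this step, since it reads ``support in $(-M,M)^s$'' as closed support, so laws in $\cP_1$ already have support strictly inside the open cube. Your extra step is harmless, and it keeps the argument valid if that phrase is read only as concentration on the open cube.
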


\begin{proof}
Let $\phi_n$ be the test of \cref{thm:localization} corresponding to $\psi_n$, $\mu$, and $w$.
Let $T : (-1,1)^s \to B$ be the coordinatewise affine bijection,
similar to $T_B$ of \cref{sec:ingredients} but mapping from $(-1, 1)$.
Pushing forward through $T$ or $T^{-1}$ preserves absolute continuity and conditional (in)dependence, and carries supports inside $(-1,1)^s$ to supports inside $B$ and back.
In particular, $Q' = T^{-1} \push Q \in \cQ_1$.

Let $\phi'_n(u_{1:n}) = \phi_n(T u_1, \dots, T u_n)$ on $((-1,1)^s)^n$, extended by zero,
so that for every $P'$ concentrated on $(-1,1)^s$,
$(P')^n \phi'_n = (T \push P')^n \phi_n$.
By \cref{thm:sp-domination} with $M=1$,
there is $P \in \cP_1$ with $P^n \phi'_n \ge (Q')^n \phi'_n - \eta$.
Then $R = T \push P$ is conditionally independent, $\supp R\subseteq B$,
and
\begin{align*}
     \bigl( (1-w) \mu + w R \bigr)^n \psi_n
  &  = R^n \phi_n
     = P^n \phi'_n
   \ge (Q')^n\phi'_n - \eta
     = Q^n \phi_n - \eta
.\end{align*}
The result follows by again using the conclusion of \cref{thm:localization}.
\end{proof}

\begin{proof}[Alternate proof of \cref{thm:main-intro}]
Replace \cref{thm:local-replacement} with \cref{thm:black-box-local}
in the recursion of \cref{thm:diagonal}.
Take any null law $R_0 \in \cP_M$ supported inside $B_0$, and define
\[
  \rho = \inf_{Q\in\cQ_M} \limsup_n Q^n \psi_n.
\]
At stage $k$, apply \cref{thm:black-box-local} with $\mu_k / (1 - w_k)$
and any conditionally dependent $Q_k \in \cE_\infty$ with $\supp Q_k\subseteq B_k$;
a rescaled smooth alternative from \cref{thm:smooth-oscillation} suffices.
Keep the weight recursion \eqref{eq:tail-budget};
the separate density argument using \eqref{eq:weight-constraint} is no longer needed.
Since $\mu_k$ gives no mass to $\R^{d_X+d_Y}\times (B_k)_Z$, \cref{thm:gluing} still shows that each $\widetilde Q_k$ (but with $D_k$ replaced by $Q_k$) is an alternative and that $P_*$ is a null, and the tail estimate is unchanged.
\end{proof}

This isolates what is needed for the diagonal argument:
\begin{romanenum}
  \item \emph{local replacement}, which swaps a selected alternative component for a null component while approximately preserving one finite-sample rejection probability;
  \item \emph{dilution}, which keeps a mixture in the alternative whenever its selected component is;
  \item \emph{countable gluing}, which keeps $Z$-separated null components in the null.
\end{romanenum}
For other models,
the same argument can apply,
as long as there is both a finite-sample domination theorem
and these closure properties.
Likewise, weak density of the null controls expectations of bounded continuous functions, but does not by itself give the local replacement inequality for arbitrary measurable tests.

\section{A topological proof by Baire category} \label{sec:baire}

The diagonal argument constructs a null whose limiting upper rejection rate is at least the infimum over the alternatives.
We now give another proof of \cref{thm:main}: in a suitable compact family,
such nulls form a dense $G_\delta$ set.
The key is to make every finite-sample rejection probability continuous,
including those of arbitrary measurable tests.
A common density bound and \cref{thm:tensorization} provide this continuity.

\subsection{A compact family of laws} \label{sec:compact-family}

We use the weak-$*$ topology on bounded subsets of $L^\infty$:
$f_k\to f$ means $\int f_k g\to\int fg$ for every $g\in L^1$.
The closed unit ball is compact by the Banach--Alaoglu theorem,
and metrizable because $L^1$ of a cube is separable
\citep[Theorems~3.15 and~3.16]{rudin91}.
We can therefore check closedness and continuity using sequences.

Write $\cU_Z=[0,1]^{d_Z}$, and let
\[
  \Theta=\bigl\{(r,t)\in L^\infty(\cU_Z)^2:r^2\le t\le1\text{ a.e.}\bigr\},
  \qquad
  \Theta_0=\bigl\{(r,t)\in\Theta:t=r^2\text{ a.e.}\bigr\}.
\]
Elements of $\Theta$ satisfy $0\le t\le1$ and $\abs r\le1$ almost everywhere.
Write $\Theta_1=\Theta\setminus\Theta_0$.
Using the sign functions $a_X$ and $a_Y$ from \cref{thm:oscillation},
fix $\eta \in (0, 1]$ and
define
\begin{equation}
  f_{r,t}^\eta(x,y,z)
  = 1 + \eta r(z) \bigl(a_X(x)+a_Y(y)\bigr) + \eta^2 t(z) a_X(x) a_Y(y)
  ,\qquad (x,y,z)\in\cU
  \label{eq:frt}
.\end{equation}
Let $\mu_{r,t}^\eta$ have density $f_{r,t}^\eta$, and put
\[
  \cW^\eta = \{ \mu_{r,t}^\eta : (r,t) \in \Theta \},
  \qquad
  \cW_0^\eta = \{ \mu_{r,t}^\eta : (r,t) \in \Theta_0 \},
  \qquad
  \cW_1^\eta = \cW^\eta \setminus \cW_0^\eta
.\]
The parameters have a simple interpretation:
$\eta r(z)$ is the conditional mean of each sign,
and $\eta^2 t(z)$ is their conditional product moment.
Thus $\eta^2(t(z)-r(z)^2)$ is their conditional covariance, which we require to be nonnegative.
The oscillating family of \cref{thm:oscillation} corresponds to $(r_m, 1)$,
with limit $(0,1)$.

\begin{lemma} \label{thm:structure}
Fix $\eta \in (0, 1]$.
For every $(r,t)\in\Theta$, $f_{r,t}^\eta$ is a probability density bounded by $(1 + \eta)^2$.
Its $Z$ marginal is uniform, and its conditional marginal densities are
$1 + \eta r(z) a_X(x)$ and $1 + \eta r(z) a_Y(y)$.
Moreover,
the map $(r,t)\mapsto \mu_{r,t}^\eta$ is injective,
and
$X \indep_{\mu_{r,t}^\eta} Y \mid Z$ if and only if $t=r^2$ almost everywhere.
\end{lemma}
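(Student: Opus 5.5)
The plan is to verify each claim directly from the explicit form \eqref{eq:frt}, using only that $a_X,a_Y$ are $\{-1,1\}$-valued, depend on $x^{(1)}$ and $y^{(1)}$ respectively, and each takes the value $+1$ on exactly half of the unit cube (so $\int a_X\ud x=\int a_Y\ud y=0$ and $\int a_Xa_Y\ud x\ud y=0$).

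\emph{Density and bound.} First I will show nonnegativity and the bound pointwise. For fixed $z$, write $\rho=\eta r(z)$ and $\tau=\eta^2 t(z)$, so $\rho^2\le\tau\le\eta^2$. The four sign patterns $(a_X,a_Y)\in\{\pm1\}^2$ give the values
\[
  1+2\rho+\tau,\qquad 1-2\rho+\tau,\qquad 1-\tau \;(\text{twice}).
\]
The first two are at least $(1\pm\rho)^2\ge0$ and at most $1+2\eta+\eta^2=(1+\eta)^2$, since $\abs\rho\le\eta$ and $\tau\le\eta^2$. The last satisfies $0\le1-\eta^2\le1-\tau\le1$. Integrating over $(x,y)$ kills every term except the $1$, because of the zero-mean properties above. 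Hence $f^\eta_{r,t}$ is a density, and its $Z$ marginal is identically $1$, i.e.\ uniform.

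\emph{Conditional marginals.} Integrating out $y$ gives $1+\eta r(z)a_X(x)$, and similarly for $x$, using $\int a_Y\ud y=0$. The product of these conditional marginals is
\[
  1+\eta r(a_X+a_Y)+\eta^2r^2a_Xa_Y,
\]
so $f^\eta_{r,t}$ minus this product equals $\eta^2\bigl(t(z)-r(z)^2\bigr)a_X(x)a_Y(y)$. By \eqref{eq:ci-density}, with $f_Z=1$, conditional independence holds iff this vanishes a.e. Since $\abs{a_Xa_Y}=1$ and $\eta>0$, that happens iff $t=r^2$ a.e.\ on $\cU_Z$ (Fubini).

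\emph{Injectivity.} If $\mu_{r,t}^\eta=\mu_{r',t'}^\eta$, then the densities agree a.e. I will integrate the difference against test functions $g(z)a_X(x)$ and $g(z)a_X(x)a_Y(y)$ for arbitrary $g\in L^\infty(\cU_Z)$. By orthogonality of $1,a_X,a_Y,a_Xa_Y$ on the $(x,y)$ cube, these integrals equal $\eta\int g(r-r')$ and $\eta^2\int g(t-t')$ respectively. Hence $r=r'$ and $t=t'$ a.e.

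Nothing here is a real obstacle. The only care needed is the nonnegativity check, which is where the constraint $r^2\le t\le1$ defining $\Theta$ is used, and the orthogonality bookkeeping.
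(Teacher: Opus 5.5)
Your proposal is correct and follows essentially the same route as the paper: a case check over the sign patterns of $(a_X,a_Y)$ for nonnegativity and the bound $(1+\eta)^2$, and the zero-mean properties of $a_X$, $a_Y$, $a_Xa_Y$ for the marginals. It then uses the difference $\eta^2(t-r^2)a_Xa_Y$ for conditional independence and recovers $r,t$ by integrating against $a_X$ and $a_Xa_Y$ for injectivity; the only cosmetic difference is that you test against $g(z)a_X(x)$ and $g(z)a_X(x)a_Y(y)$ rather than reading off $r(z)$ and $t(z)$ directly by integrating over $(x,y)$, which is equivalent by Fubini.
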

\begin{proof}
We have $f_{r,t}^\eta \le 1 + 2 \eta + \eta^2 = (1 + \eta)^2$ almost everywhere,
since $\abs r\le1$, $t\le1$, and $\abs{a_X}=\abs{a_Y}=1$.
When $a_X = a_Y$, since $t\ge r^2$ we have
$
  f_{r,t}^\eta
  = 1 \pm 2 \eta r + \eta^2 t
  \ge (1 - \eta)^2 \ge 0
$;
if instead $a_X = -a_Y$,
we have
$f_{r,t}^\eta = 1 - \eta^2 t \ge 0$.
The total mass is one, from $\int a_X = \int a_Y = 0$;
thus $f_{r,t}^\eta$ is a density.
The marginal formulas follow immediately.

The parameters can be recovered from the density,
showing injectivity:
\[
  r(z) = \frac1\eta \int f_{r,t}^\eta(x,y,z) a_X(x) \ud x \ud y
  ,\quad
  t(z) = \frac{1}{\eta^2}  \int f_{r,t}^\eta(x,y,z) a_X(x) a_Y(y) \ud x \ud y
.\]

Since the marginal density on $Z$ is $1$,
by \eqref{eq:ci-density} conditional independence holds exactly when the product of the conditional marginals equals \eqref{eq:frt} almost everywhere.
Their difference is $\eta^2 (r^2 - t) a_X a_Y$;
as $\eta > 0$ and $\abs{a_X a_Y}=1$, conditional independence holds exactly when $t = r^2$ almost everywhere.
\end{proof}

\begin{lemma} \label{thm:compact}
$\Theta$ is weak-$*$ compact and metrizable, and
$\Gamma:(r,t)\mapsto \mu_{r,t}^\eta$ is a homeomorphism onto $\cW^\eta$,
where $\cW^\eta$ carries the weak-$*$ topology on its densities.
On $\cW^\eta$ this topology agrees with both setwise convergence and the weak topology of probability measures.
For every $n$ and every measurable test $\psi_n$,
the map $\mu \mapsto \mu^n \psi_n$ is continuous on $\cW^\eta$.
\end{lemma}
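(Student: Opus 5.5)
The argument rests on three observations. $\Theta$ is a weak-$*$ closed subset of a bounded ball. The map $\Gamma$ is affine in $(r,t)$, hence weak-$*$ continuous. On uniformly bounded densities, weak-$*$ convergence is equivalent to setwise convergence, and \cref{thm:tensorization} then gives continuity of $\mu\mapsto\mu^n\psi_n$.

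\emph{Compactness and metrizability.} $\Theta$ lies in the product of two closed unit balls of $L^\infty(\cU_Z)$. That product is weak-$*$ compact by Banach--Alaoglu and metrizable because $L^1(\cU_Z)$ is separable, so it suffices to show $\Theta$ is sequentially closed. Suppose $(r_k,t_k)\to(r,t)$ weak-$*$ with $r_k^2\le t_k\le 1$. Testing against nonnegative $g\in L^1$ gives $t\le 1$ a.e. For $r^2\le t$, the key is the convexity identity $t_k\ge r_k^2\ge 2r_kh-h^2$, valid for any $h\in L^\infty$. Integrating against $g\ge0$ and passing to the limit gives $\int g t\ge\int g(2rh-h^2)$. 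Taking $h=r$ yields $\int g(t-r^2)\ge0$ for all $g\ge0$, so $t\ge r^2$ a.e. This closedness of the nonlinear constraint under weak-$*$ limits is the one genuinely non-linear step, and I expect it to be the main obstacle; the convexity trick should handle it.

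\emph{Homeomorphism.} For $G\in L^1(\cU)$ we have
\[
\int G f^\eta_{r,t}=\int G+\eta\int r(z)\Bigl[\int G(a_X+a_Y)\ud x\ud y\Bigr]\ud z+\eta^2\int t(z)\Bigl[\int G a_Xa_Y\ud x\ud y\Bigr]\ud z .
\]
The bracketed functions lie in $L^1(\cU_Z)$ by Fubini, so $\Gamma$ is sequentially weak-$*$ continuous. \Cref{thm:structure} shows $\Gamma$ is injective. A continuous bijection from a compact space to a Hausdorff space is a homeomorphism; $\cW^\eta$ is Hausdorff since it sits in a metrizable ball of $L^\infty(\cU)$, all densities being bounded by $(1+\eta)^2$. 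One could also note directly that the inverse formulas of \cref{thm:structure} are weak-$*$ continuous, since they integrate the density against bounded functions of $(x,y)$ times $g(z)$.

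\emph{Topologies agree, and test continuity.} On densities bounded by $(1+\eta)^2$, weak-$*$ convergence implies setwise convergence by taking $F=\indic_A$. Setwise convergence implies weak convergence. Conversely, suppose $\mu_k\to\mu$ weakly within $\cW^\eta$. The densities have a weak-$*$ convergent subsequence, whose limit lies in $\cW^\eta$ by compactness. That limit's measure agrees with $\mu$ on bounded continuous functions, hence equals $\mu$, so every subsequence has a further subsequence converging weak-$*$ to $\mu$. Metrizability then gives weak-$*$ convergence of the full sequence. Finally, \cref{thm:tensorization} applied with the common bound $L=(1+\eta)^2$ gives $\mu_k^n\psi_n\to\mu^n\psi_n$ along any weak-$*$ convergent sequence. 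Since the space is metrizable, sequential continuity is continuity.
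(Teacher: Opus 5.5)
Your proposal is correct and follows essentially the paper's route. It uses Banach--Alaoglu plus a tangent-line inequality for closedness of $\Theta$, and the Fubini expansion with injectivity and compact-to-Hausdorff for $\Gamma$. It then goes from weak-$*$ to setwise to weak convergence, gets the converse from compactness, and uses \cref{thm:tensorization} for the rejection probabilities.

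The differences are minor:
\begin{romanenum}
  \item You substitute $h=r\in L^\infty$ directly into $r_k^2\ge 2r_kh-h^2$, where the paper takes a countable supremum over rational constants $c$.
  \item You replace the compact-to-Hausdorff argument for the identity maps with a subsequence argument. That argument shows the weak-convergent and weak-$*$-convergent sequences coincide. To conclude the topologies themselves agree, you need one more sentence: the weak topology is metrizable, and setwise is sandwiched between weak and weak-$*$.
\end{romanenum}
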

\begin{proof}
$\Theta$ lies in the product of two weak-$*$ compact metrizable unit balls.
To show it is closed, suppose $(r_k,t_k)\to(r,t)$ weak-$*$ with $(r_k,t_k)\in\Theta$.
For each $c \in \mathbb Q$,
the definition of $\Theta$ gives that
$t_k - 2 c r_k + c^2 \ge (r_k - c)^2 \ge 0$ almost everywhere.
For any nonnegative $h \in L^1(\cU_Z)$,
weak-$*$ convergence gives
\[
  \int (t-2cr+c^2) \, h \ud z
  = \lim_k \int (t_k - 2 c r_k + c^2) \, h \ud z \ge 0
,\]
implying $t \ge 2 c r - c^2$ almost everywhere.
Since the rationals are countable, these inequalities hold
for all $c \in \mathbb Q$
outside a single null set.
Taking their supremum, density of $\mathbb Q$ in $\R$
gives that $t \ge 2 r \cdot r - r^2 = r^2$ almost everywhere.
Similarly, $t_k \le 1$ implies $t \le 1$;
thus $(r, t) \in \Theta$.
Therefore $\Theta$ is weak-$*$ compact and metrizable.

For $F\in L^1(\cU)$, Fubini's theorem and \eqref{eq:frt} give
\begin{equation} \label{eq:fubini-frt}
  \int F f_{r,t}^\eta \ud v
  = \int F \ud v + \eta\int r F_1 \ud z + \eta^2\int t F_2 \ud z,
\end{equation}
where the functions
\begin{align*}
  F_1(z)&=\int F(x,y,z)(a_X(x)+a_Y(y))\ud x\ud y,\\
  F_2(z)&=\int F(x,y,z)a_X(x)a_Y(y)\ud x\ud y
\end{align*}
are integrable since $F \in L^1(\cU)$ and $\abs{a_X} = \abs{a_Y} = 1$.
As the first term on the right-hand side of \eqref{eq:fubini-frt} is constant
and each of the latter two is a continuous function of $(r, t)$,
$\Gamma$ is continuous.
By \cref{thm:structure} it is injective;
compactness makes it a homeomorphism onto its image.

Weak-$*$ convergence of these densities implies setwise convergence,
which in turn implies weak convergence of the laws.
As $\cW^\eta$ is metrizable, we can check continuity via sequences,
and so the identity from the weak-$*$ topology to each of the other two is continuous.
Both setwise and weak topologies are Hausdorff, and a continuous bijection from a compact space onto a Hausdorff space is a homeomorphism,
so all three topologies agree on $\cW^\eta$.
Finally, the common density bound and \cref{thm:tensorization} give
$\mu_k^n \psi_n \to \mu^n \psi_n$ whenever $\mu_k \to \mu$ in $\cW^\eta$.
\end{proof}

\subsection{Density and category of the hypotheses} \label{sec:category}

It is unsurprising that $\cW_1^\eta$ is dense in $\cW^\eta$.
\begin{lemma} \label{thm:w1-dense}
$\Theta_1$ is weak-$*$ dense in $\Theta$; hence $\cW_1^\eta$ is dense in $\cW^\eta$.
\end{lemma}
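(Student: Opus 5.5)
Since $\Theta$ is weak-$*$ compact and metrizable (\cref{thm:compact}), it suffices to approximate every $(r,t)\in\Theta$ by a sequence in $\Theta_1$. Once that is done, density of $\cW_1^\eta$ in $\cW^\eta$ follows immediately, because $\Gamma$ is a homeomorphism onto $\cW^\eta$ and maps $\Theta_1$ onto $\cW_1^\eta$ (by \cref{thm:structure}, $\mu^\eta_{r,t}$ is conditionally dependent exactly when $t\ne r^2$ on a positive-measure set).

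To build the approximation, I would shrink the mean parameter and raise the product moment. Take $(r,t)\in\Theta$ and set
\[
  r_k=(1-1/k)\,r,\qquad t_k=\max\{t,\,r_k^2+1/k^2\}\wedge 1 .
\]
Then $t_k\le 1$ and $t_k\ge r_k^2$. On the set where $t_k\ge r_k^2+1/k^2$, the two differ by a definite amount. Where the cap at $1$ binds instead, we have $t_k=1$ while $r_k^2\le(1-1/k)^2<1$. So $t_k>r_k^2$ everywhere, hence $t_k\ne r_k^2$ on a set of full (in particular positive) measure, and $(r_k,t_k)\in\Theta_1$.

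For convergence, $\|r_k-r\|_\infty\le 1/k$. Also $t_k-t\le\max\{0,\,r_k^2+1/k^2-t\}\le 1/k^2$, because $r_k^2\le r^2\le t$. So $(r_k,t_k)\to(r,t)$ in $L^\infty$ norm, and a fortiori weak-$*$.

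I don't expect a serious obstacle. The only point needing care is keeping $t_k$ inside $[r_k^2,1]$ while guaranteeing strict inequality; the factor $(1-1/k)$ handles exactly the case $t=r^2=1$. An alternative would be to show that for fixed $r$ the set $\{t: r^2\le t\le 1\}$ contains $r^2$ as a limit of its other elements, which also works when $r^2<1$ on positive measure. The explicit formula above avoids that case split.
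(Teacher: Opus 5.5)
Your proof is correct and takes essentially the paper's route: shrink $r$ by a factor $(1-\eps)$, raise $t$ by an $L^\infty$-small amount so that $t-r^2$ has a uniform positive gap, and pass to $\cW^\eta$ using norm (hence weak-$*$) convergence and continuity of $\Gamma$. The only difference is that the paper uses the affine choice $t_\eps=(1-\eps)t+\eps$, which gives $t_\eps-r_\eps^2\ge\eps$ in one line without your max/min truncation and case analysis.
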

\begin{proof}
Let $(r, t) \in \Theta$ and $\eps \in (0, 1)$;
define $r_\eps = (1-\eps) r$, $t_\eps = (1-\eps) t + \eps \le 1$.
We have
\[
  t_\eps - r_\eps^2
  = (1 - \eps) (t - r^2) + \eps (1 - \eps) r^2 + \eps
  \ge \eps
.\]
Thus $\mu_{r_\eps, t_\eps}^\eta \in \cW_1^\eta$.
We also have $\norm{r_\eps - r}_\infty, \norm{t_\eps - t}_\infty \le \eps$,
so $\mu_{r_\eps, t_\eps}^\eta \to \mu_{r,t}^\eta$ as $\eps \searrow 0$.
\end{proof}

As we have seen before,
conditional dependence
can be approximated by rapidly oscillating conditionally independent laws,
and so $\cW_0^\eta$ is also dense.
\begin{lemma} \label{thm:mixtures}
$\Theta_0$ is weak-$*$ dense in $\Theta$; hence $\cW_0^\eta$ is dense in $\cW^\eta$.
\end{lemma}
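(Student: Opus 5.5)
Given $(r,t)\in\Theta$, I will build $(r_m,t_m)\in\Theta_0$ with $(r_m,t_m)\to(r,t)$ weak-$*$. The density in $\cW^\eta$ then follows from the continuity of $\Gamma$ in \cref{thm:compact}. The idea is the same one behind \cref{thm:oscillation}: at each $z$, the pair $(r(z),t(z))$ satisfies $r^2\le t\le 1$, so it is a convex combination of two points on the parabola $\{(c,c^2):c\in[-1,1]\}$. I then let a conditionally independent law switch rapidly between these two points as $z^{(1)}$ varies.

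\emph{Pointwise decomposition.} For each $z$ I set $\sigma(z)=\sqrt{t(z)-r(z)^2}$ and $c_\pm(z)=r(z)\pm\sigma(z)$. Then $\tfrac12(c_++c_-)=r$ and $\tfrac12(c_+^2+c_-^2)=r^2+\sigma^2=t$. I need $\abs{c_\pm}\le1$. This holds because $(r\pm\sigma)^2\le 2r^2+2\sigma^2$ is not sharp enough, so I argue directly. For $r\ge0$, the bound $r+\sigma\le1$ is equivalent to $t-r^2\le(1-r)^2$, i.e.\ $t\le 1-2r+2r^2$. Since $t\le1$, this needs $2r^2-2r\ge t-1$, which can fail.

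So the equal-weight split is wrong, and I will use unequal weights instead. I write $(r,t)$ as the combination $\lambda(c_1,c_1^2)+(1-\lambda)(c_2,c_2^2)$, taking $c_1=1$, $c_2=(r-\lambda)/(1-\lambda)$ and solving for $\lambda$. More simply, I take the chord through $(\pm1,1)$: the point $(r,t)$ lies in the convex hull of the arc, which is the region $r^2\le t\le1$. Every point of that region lies on a segment joining $(c,c^2)$ to $(\pm1,1)$. Concretely, if $t<1$, the line through $(1,1)$ and $(r,t)$ meets the parabola again at $c=(t-r)/(1-r)$ for $r<1$, and this gives $\lambda(z)\in[0,1]$ and $c(z)\in[-1,1]$ measurably. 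If $t=1$, I use $(\pm1,1)$ with weights $(1\pm r)/2$. The result is measurable $\lambda,c_1,c_2$ with $\abs{c_i}\le1$ and
\[
  r=\lambda c_1+(1-\lambda)c_2,\qquad t=\lambda c_1^2+(1-\lambda)c_2^2.
\]

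\emph{Oscillation.} I define $E_m=\{z:\{m z^{(1)}\}<\lambda(z)\}$, where $\{\cdot\}$ is the fractional part, and set $r_m=c_1\indic_{E_m}+c_2\indic_{E_m^c}$, $t_m=r_m^2$. Then $(r_m,t_m)\in\Theta_0$. It remains to show $\indic_{E_m}h\to\lambda h$ weakly for $h\in L^1(\cU_Z)$ and bounded measurable coefficients. By density and the uniform bound $\abs{r_m},t_m\le1$, I may take $h$ continuous and $\lambda,c_i$ simple functions on boxes, by a Lusin-type approximation in $L^1$. On a box where $\lambda$ is constant, $\indic(\{mz^{(1)}\}<\lambda)-\lambda$ has antiderivatives in $z^{(1)}$ that are uniformly $O(1/m)$, so \cref{thm:oscillation-1d} gives convergence.

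The main obstacle is the approximation step. Replacing $\lambda,c_i$ by simple approximations changes $r_m$ on a set whose measure must be small uniformly in $m$. I will handle this by approximating $(\lambda,c_1,c_2)$ in measure by step functions $(\lambda',c_1',c_2')$, which differ only on a set $N$ of small measure. On $N^c$ the constructed $r_m,t_m$ coincide except where $\{mz^{(1)}\}$ lies between $\lambda$ and $\lambda'$. Since $\abs{\lambda-\lambda'}<\delta$ off $N$, this set has measure at most about $2\delta$, uniformly in $m$, by a Fubini count over the periods. This controls the error uniformly in $m$, and the result follows.
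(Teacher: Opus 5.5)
Your proof is correct, but it is organized differently from the paper's. The paper first replaces $(r,t)$ by the dyadic averages $(r_j,t_j)=\E[(r,t)\mid\sigma(\cD_j)]$, which stay in $\Theta$ by Jensen's inequality and converge to $(r,t)$ by martingale convergence. On each dyadic cell the parameters are constant, and it splits along the horizontal chord between $(\pm\sqrt{t_j},t_j)$, putting weight $\frac12(1+r_j/\sqrt{t_j})$ on the positive point. So $t$ is never perturbed and only $r$ oscillates. It then concludes from closedness of $\overline{\Theta_0}$. Because every oscillation there has constant parameters on a cell, \cref{thm:oscillation-1d} applies directly and no estimate uniform in $m$ is needed.

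You instead keep measurable, $z$-dependent weights and build a single sequence, so your threshold $\lambda(z)$ may depend on $z^{(1)}$ itself. You handle this correctly by approximating $\lambda$ with a step function $\lambda'$. Your ``Fubini count'' works because $\lambda'\equiv a$ is constant on each box. Off $N$, the switching set then lies in $\{\abs{\{mz^{(1)}\}-a}<\delta\}$, which meets each $z^{(1)}$-segment of length $\ell$ in measure at most $2\delta(\ell+2/m)$. Your route avoids Jensen and martingale convergence at the cost of this uniform-in-$m$ estimate; the paper's two-level closure argument needs no such control. You need not approximate $c_1,c_2$ at all: keep them exact and absorb them into the $L^1$ test function, which makes your ``coincide off $N$'' claim literally true. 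You should also clip $\lambda'$ to $[0,1]$.

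Two small corrections to your decomposition. First, the equal-weight split does fail, as you found, but the horizontal chord $(\pm\sqrt t,t)$ with weights $\frac12(1\pm r/\sqrt t)$ always works; where $t=0$ you have $r=0$, so take $r_m=0$ there. This would remove your case split at $t=1$ and the need to prove $t_m\to t$ separately. Second, your intersection formula has a sign slip. The line through $(1,1)$ and $(r,t)$ meets the parabola again at $c=(r-t)/(1-r)$, not $(t-r)/(1-r)$; for $(r,t)=(0,\frac12)$ this point is $-\frac12$. The weight on $(1,1)$ is then $\lambda=(t-r^2)/(1-2r+t)\in[0,1]$.
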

\begin{proof}
Fix $(r,t)\in\Theta$.
Let $\cD_j$ be the partition of $\cU_Z$ into dyadic cubes of side $2^{-j}$.
Under uniform Lebesgue measure on $\cU_Z$,
define $r_j = \E[ r \mid \sigma(\cD_j) ]$ and $t_j = \E[ t \mid \sigma(\cD_j) ]$.
These functions give the averages over the corresponding cells;
the sequences $(r_j)_{j\ge1}$ and $(t_j)_{j\ge1}$ are bounded martingales with respect to the increasing $\sigma$-algebras $\sigma(\cD_j)$.
Jensen's inequality and the almost-everywhere pointwise inequalities from $\Theta$ give $r_j^2 \le t_j \le 1$.
Since the dyadic partitions generate the Borel $\sigma$-algebra, martingale convergence implies $(r_j, t_j) \to (r,t)$ in $L^1$;
the uniform bounds $\abs{r_j},\abs r,t_j,t\le1$ make this weak-$*$ convergence as well,
by truncating each $L^1$ test function.
To show weak-$*$ density of $\Theta_0$,
it will thus suffice to show that every $(r_j,t_j)$ is in $\overline{\Theta_0}$.

Fix any $j$.
For each $m \ge 1$,
we will define a function $r_{j,m}$ as follows.
For any input $z$,
let $C$ be the cell of $\cD_j$ in which it lies;
let $r_j = \rho_C$, $t_j = \tau_C^2$ on this cell.
If $\tau_C=0$, then $\rho_C=0$; define $r_{j,m}=0$ on $C$.
Otherwise, put
\[
  w_C = \frac12 \left(1 + \frac{\rho_C}{\tau_C} \right) \in [0,1]
  ,\quad
  r_{j,m}(z) = \tau_C \left( 2 \indic_{[0,w_C)}(\{mz^{(1)}\}) - 1 \right)
,\]
where $\{u\}=u-\floor u$.
Then $r_{j,m}^2=t_j$, so $(r_{j,m},t_j)\in\Theta_0$.

On $C$, we have $r_{j,m} - \rho_C = 2 \tau_C \, g_m(z^{(1)})$
for the centred periodic function
$g_m = \indic_{[0,w_C)}(\{m\,\cdot\}) - w_C$,
since $2 \tau_C w_C = \tau_C + \rho_C$.
We have $\abs{g_m} \le 1$,
and $g_m$ has mean zero over each period of length $1/m$;
thus only the two partial periods at the ends of an interval contribute,
and its integral over any interval has absolute value at most $2/m$.
For every $G\in L^1(\cU_Z)$ the function $v\mapsto G(z)\indic_C(z)$ lies in $L^1(\cU)$,
so \cref{thm:oscillation-1d} gives $r_{j,m}\to\rho_C$ weak-$*$ on $C$.
Summing over the finitely many cells gives $r_{j,m}\to r_j$ weak-$*$.
As each $(r_{j,m}, t_j) \in \Theta_0$,
this means $(r_j,t_j)\in\overline{\Theta_0}$.
\end{proof}

Even more,
the weak-$*$ topology on $\cW^\eta$
makes $\cW_0^\eta$ more prominent than $\cW_1^\eta$.
Recall that a set is \emph{nowhere dense} if its closure has empty interior,
\emph{meagre} if it is a countable union of nowhere dense sets,
and \emph{comeagre} if its complement is meagre.
A $G_\delta$ set is a countable intersection of open sets,
and an $F_\sigma$ set is a countable union of closed sets.
In a nonempty compact metrizable space,
the Baire category theorem shows countable intersections of dense open sets are dense,
and hence comeagre sets are dense
\citep[Section 8]{Kechris1995}.

\begin{proposition} \label{thm:category}
$\cW_0^\eta$ is $G_\delta$ and comeagre in $\cW^\eta$; it is not $F_\sigma$.
$\cW_1^\eta$ is $F_\sigma$ and meagre.
\end{proposition}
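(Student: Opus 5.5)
Since $\Gamma$ is a homeomorphism from $\Theta$ onto $\cW^\eta$ (\cref{thm:compact}) carrying $\Theta_0$ to $\cW_0^\eta$ (\cref{thm:structure}), I will prove everything for $\Theta_0,\Theta_1\subseteq\Theta$ in the weak-$*$ topology and transfer. The plan is to express $\Theta_1$ as a countable union of closed sets, each of which has empty interior by \cref{thm:mixtures}.

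\emph{$\Theta_1$ is $F_\sigma$.} I will use the functional $D(r,t)=\int_{\cU_Z}(t-r^2)\ud z\ge0$, which vanishes exactly on $\Theta_0$. The term $\int t$ is weak-$*$ continuous. The map $r\mapsto\int r^2$ is weak-$*$ lower semicontinuous, since it is the supremum of $\int(2hr-h^2)$ over $h\in L^\infty\subseteq L^1$. Hence $D$ is upper semicontinuous, and each set $K_j=\{(r,t)\in\Theta:D(r,t)\ge1/j\}$ is closed. Their union is $\{D>0\}=\Theta_1$. A more hands-on alternative, closer to the proof of \cref{thm:compact}, is to test $t-r^2$ against $h\ge0$, using $r^2=\sup_{c\in\mathbb Q}(2cr-c^2)$; the semicontinuity route seems cleaner. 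The complement $\Theta_0=\bigcap_j(\Theta\setminus K_j)$ is then $G_\delta$.

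\emph{Meagre and comeagre.} Each $K_j$ is closed and contained in $\Theta_1$, so any nonempty open subset of $K_j$ would be an open set missing $\Theta_0$. That contradicts the density of $\Theta_0$ from \cref{thm:mixtures}. So each $K_j$ is nowhere dense, $\Theta_1$ is meagre, and $\Theta_0$ is comeagre.

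\emph{$\Theta_0$ is not $F_\sigma$.} This is the step I expect to be the main obstacle. Suppose $\Theta_0=\bigcup_i C_i$ with each $C_i$ closed. Then $\Theta_1=\bigcap_i(\Theta\setminus C_i)$ is $G_\delta$, and it is dense by \cref{thm:w1-dense}, so it is comeagre. We already know $\Theta_0$ is comeagre, so $\emptyset=\Theta_0\cap\Theta_1$ would be comeagre in the nonempty compact metrizable space $\Theta$. By Baire's theorem comeagre sets are dense, hence nonempty, which is a contradiction. I will check carefully that $\Theta$ is nonempty (it contains $(0,0)$) and that the Baire theorem applies, which holds by \cref{thm:compact}. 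Everything then transfers to $\cW^\eta$ through $\Gamma$.
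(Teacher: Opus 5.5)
Your proposal is correct and follows the paper's proof essentially step for step. It uses the same upper semicontinuous functional $\int(t-r^2)$, with $\int r^2$ lower semicontinuous as a supremum of $\int(2hr-h^2)$, and gets nowhere density of its superlevel sets from \cref{thm:mixtures}. The non-$F_\sigma$ claim is the same Baire contradiction: you phrase it as $\Theta_1$ becoming a dense $G_\delta$, while the paper notes each closed piece of $\cW_0^\eta$ would be nowhere dense. Working in $\Theta$ and transferring through $\Gamma$ is equivalent to the paper's defining $\Phi$ directly on $\cW^\eta$ via injectivity.
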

\begin{proof}
By the injectivity in \cref{thm:structure}, the nonnegative functional
$
  \Phi(\mu_{r,t}^\eta) = \int_{\cU_Z}(t - r^2) \ud z
$
is well-defined on $\cW^\eta$, and vanishes exactly on $\cW_0^\eta$.
The map $t\mapsto \int t$ is weak-$*$ continuous.
Consider the collection of continuous functions
$r \mapsto \int (2 r g - g^2) \ud z$,
and notice that
since $2 r g - g^2 = r^2 - (r - g)^2$,
their supremum over $g \in L^\infty(\cU_Z)$
is $\int r^2$;
thus $r \mapsto \int r^2$ is lower semicontinuous.
Hence $\Phi$ is upper semicontinuous,
and so the sets $A_j := \{ \Phi \ge 1 / j \}$ are closed for each $j \ge 1$.

Each $A_j$ is nowhere dense:
it misses $\cW_0^\eta = \{ \Phi = 0 \}$, which is dense by \cref{thm:mixtures}.
Thus $\cW_1^\eta = \bigcup_{j \ge 1} \{ \Phi \ge 1 / j \}$
is $F_\sigma$ and meagre;
its complement $\cW_0^\eta$ is $G_\delta$ and comeagre.

Suppose $\cW_0^\eta$ is $F_\sigma$:
it can be written as a countable union of closed sets.
Each of these closed sets must miss the set $\cW_1^\eta$,
which is dense by \cref{thm:w1-dense},
making $\cW_0^\eta$ meagre.
Thus $\cW^\eta = \cW_0^\eta \cup \cW_1^\eta$ is also meagre,
and so its complement, the empty set, is dense by the Baire category theorem.
Since $\cW^\eta$ is nonempty, this cannot be true;
thus $\cW_0^\eta$ is not $F_\sigma$.
\end{proof}

Here $\eta^2\Phi$ is the average conditional covariance of the two signs.
Along the oscillating family $\mu_{r_m,1}^\eta$, $r_m\to0$ weak-$*$ but $r_m^2=1$,
so $\Phi(\mu_{r_m,1}^\eta)=0$ while $\Phi(\mu_{0,1}^\eta)=1$.
This is how a limit of null laws becomes dependent: weak-$*$ convergence preserves the conditional means in the integrated sense, but need not preserve their squares.

\subsection{Generic witnesses} \label{sec:generic}

\begin{theorem} \label{thm:generic}
For any test sequence $(\psi_n)$, let
$\rho = \inf_{Q\in\cW_1^\eta} \limsup_n Q^n \psi_n$.
Then
$
  \bigl\{ P \in \cW_0^\eta : \limsup_{n\to\infty} P^n \psi_n \ge \rho \bigr\}
$
is a dense $G_\delta$ subset of $\cW^\eta$.
In particular, if the test is consistent at every alternative in $\cW_1^\eta$,
then $\limsup_n P^n \psi_n = 1$ for a comeagre set of nulls in $\cW^\eta$,
and (a fortiori) there is at least one $P_*$ for which $\limsup_n P_*^n \psi_n = 1$.
\end{theorem}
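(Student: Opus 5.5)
We will show that the set $\cG = \{ P \in \cW^\eta : \limsup_n P^n\psi_n \ge \rho \}$ is a $G_\delta$ in $\cW^\eta$ that contains $\cW_1^\eta$. Intersecting it with the dense $G_\delta$ set $\cW_0^\eta$ of \cref{thm:category} then gives the set in the statement. That intersection is a countable intersection of open sets, and it is comeagre because both pieces are comeagre. Comeagre sets in the nonempty compact metrizable space $\cW^\eta$ are dense by the Baire category theorem, so the first claim follows.

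For the $G_\delta$ property, write $h_n(\mu) = \mu^n\psi_n$. Each $h_n$ is continuous on $\cW^\eta$ by \cref{thm:compact}; this is the key input, and it holds even for arbitrary measurable $\psi_n$ because of the common density bound and \cref{thm:tensorization}. We express the condition $\limsup_n h_n(P) \ge \rho$ as
\[
  \bigcap_{j\ge1}\;\bigcap_{N\ge1}\;\bigcup_{n\ge N}\bigl\{ \mu\in\cW^\eta : h_n(\mu) > \rho - 1/j \bigr\}.
\]
Each inner set is open by continuity of $h_n$, so the union over $n$ is open and the countable intersection is a $G_\delta$. If $\rho = -\infty$ could occur the claim would be trivial, but $\rho\in[0,1]$ here since $\cW_1^\eta$ is nonempty. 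Every $Q\in\cW_1^\eta$ satisfies $\limsup_n Q^n\psi_n\ge\rho$ by definition of the infimum, so $\cW_1^\eta\subseteq\cG$.

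For density we do not actually need $\cW_1^\eta\subseteq\cG$ to do the work. An alternative route would be to note that $\cG\supseteq\cW_1^\eta$ is dense by \cref{thm:w1-dense}, so $\cG$ is a dense $G_\delta$ and hence comeagre. Intersecting with the comeagre set $\cW_0^\eta$ keeps it comeagre and thus dense, and it remains a $G_\delta$. Either way, the conclusion holds. For the final sentence: if the test is consistent on $\cW_1^\eta$, then $\rho = 1$, and the resulting nonempty dense set supplies $P_*$ with $\limsup_n P_*^n\psi_n = 1$.

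The main obstacle is only the continuity of $\mu\mapsto\mu^n\psi_n$ for merely measurable tests, which is exactly what \cref{thm:compact} provides. Beyond that, the remaining care is to write the $\limsup$ condition with strict inequalities, so that the level sets of continuous functions are open.
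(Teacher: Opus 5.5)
Your proof is correct and is essentially the paper's argument: the sets $\bigcup_{n\ge N}\{\mu : \mu^n\psi_n > \rho - 1/j\}$ are open by \cref{thm:compact} and dense because they contain $\cW_1^\eta$ (\cref{thm:w1-dense}), and intersecting the resulting dense $G_\delta$ with the dense $G_\delta$ set $\cW_0^\eta$ finishes the proof via Baire category. One correction to your exposition: the inclusion $\cW_1^\eta\subseteq\mathcal{G}$ is not dispensable---together with \cref{thm:w1-dense} it is exactly what makes $\mathcal{G}$ comeagre, so your ``alternative route'' is the actual justification for the comeagreness you assert in your first paragraph.
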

\begin{proof}
For $j,m\ge1$, let
$
  O_{j,m} = \bigcup_{n\ge m} \{ \mu \in \cW^\eta : \mu^n \psi_n > \rho - \tfrac1j \}
$.
Each of these sets is open by \cref{thm:compact}.
By the definition of $\rho$,
each of these sets contains $\cW_1^\eta$,
and hence is dense by \cref{thm:w1-dense}.
Thus
$
  O =
  \{ \mu \in \cW^\eta : \limsup_n \mu^n \psi_n \ge \rho\}
  = \bigcap_{j, m \ge 1} O_{j,m}
$
is a dense $G_\delta$ set by the Baire category theorem.
$\cW_0^\eta$ is dense (\cref{thm:mixtures}) and $G_\delta$ (\cref{thm:category}),
so
$O \cap \cW_0^\eta = \{ P \in \cW_0^\eta : \limsup_n P^n \psi_n \ge \rho \}$
is also dense and $G_\delta$ by another application of the Baire category theorem.
Its complement is a countable union of nowhere-dense sets,
hence $O \cap \cW_0^\eta$ is comeagre.
As $\cW^\eta$ is a nonempty compact metrizable space, it is not meagre in itself;
this implies $O \cap \cW_0^\eta$ has at least one element.
\end{proof}

\begin{proof}[Alternate proof of \cref{thm:main}]
Let $L > (2M)^{-s}$ be given,
and choose $\eta$ and a box $G$ with $\overline G\subset(-M,M)^s$
such that $(1+\eta)^2/\vol(G) \le L$;
for $M = \infty$ it suffices to take $G$ large.
Using the affine map $T_G$ from \cref{sec:ingredients},
put $\cW^\eta_G = \{ (T_G) \push \mu : \mu \in \cW^\eta \}$.
$T_G$ preserves conditional (in)dependence, and divides density bounds by $\vol(G)$.
Thus $\cW^\eta_G\subseteq\cE_{M,L}$,
$\cW^\eta_G\cap\cP_M=(T_G)\push\cW^\eta_0\subseteq\cP_{M,L}$,
and $\cW^\eta_G\cap\cQ_M=(T_G)\push\cW^\eta_1\subseteq\cQ_{M,L}$.
Since $\bigl((T_G)\push \mu\bigr)^n\psi_n = \mu^n\bigl(\psi_n\circ T_G^{\times n}\bigr)$,
applying \cref{thm:generic} to the pulled-back tests $\psi_n\circ T_G^{\times n}$ gives
$P_*\in\cW^\eta_G\cap\cP_M$ with
\[
  \limsup_nP_*^n\psi_n
  \ge\inf_{Q\in\cW^\eta_G\cap\cQ_M}\limsup_nQ^n\psi_n
  \ge\inf_{Q\in\cQ_{M,L}}\limsup_nQ^n\psi_n.
\]
This is the inequality of \cref{thm:main}; its remaining claims follow as in \cref{sec:main-proof}.
\end{proof}

\Cref{thm:category,thm:generic}
do not assert generic failure in the full density model,
but only in the relative topology on this particular family.
The affine pushforward is a continuous injection from the compact space $\cW^\eta$ into the Hausdorff space of laws on $\R^s$ with the weak topology,
so it is a homeomorphism onto $\cW^\eta_G$ and carries the category conclusions to that family.
The result does strengthen the existence conclusion:
every relative neighbourhood in $\cW^\eta_G$ contains a null witness,
and all laws in this family have the same uniform $Z$ marginal.
The argument does not give the regularity of \cref{thm:regular};
for that, the explicit construction controls both the amplitudes of the oscillations and the masses of their slabs.

The category result also identifies the obstruction to the criterion of \textcite{boekenEtAl2026}:
consistent finite-precision testability requires both hypotheses to be $F_\sigma$ in the relative weak topology on their union.
By \cref{thm:category}, the null in $\cW^\eta_G$ is not $F_\sigma$, whereas the alternative is.
If $\cP_{M,L}$ were $F_\sigma$ in $\cE_{M,L}$ for some $L>(2M)^{-s}$,
its intersection with $\cW^\eta_G\subseteq\cE_{M,L}$ would be $F_\sigma$ there, a contradiction.
Thus the full null cannot be exhausted by countably many subsets that are weakly closed in $\cE_{M,L}$.

This answers the density-model question of \textcite{boekenEtAl2026} in the negative.
Their tests have open acceptance and rejection regions, with suspension of judgement allowed elsewhere.
Merging suspension with acceptance would give a measurable test whose rejection probability tends to zero at every null and to one at every alternative,
also ruled out directly by \cref{thm:main-intro}.

The compact family explains why our topological argument applies even to arbitrary measurable tests.
On $\cW^\eta_G$, weak convergence agrees with weak-$*$ convergence of the bounded densities,
and \cref{thm:tensorization} makes every finite-sample rejection probability continuous.
Weak convergence on the unrestricted space of Borel laws does not provide this continuity for measurable tests.
Related $F_\sigma$ criteria for measurable tests appear in \textcite{demboPeres1994},
\textcite[Corollary~9.4.23]{kleijn:freqbayes}, and \textcite{ermakov:consistent}.

\begin{appendix}
\crefalias{section}{appendix}
\crefalias{subsection}{appendix}

\section{Discrete conditioning variables}
\label{sec:discrete-z}

We now prove \cref{thm:consistent-ind-test,thm:discrete-consistent}.

\begin{proposition}[store*=thm:consistent-ind-test]
  Let $\cX, \cY$ be Polish spaces,
  $\cM$ the set of Borel probability measures on $\cX \times \cY$,
  and $\cP \subseteq \cM$ the distributions where $X \indep Y$.
  For any $\alpha \in (0, 1)$,
  there exists a strongly consistent sequence of tests
  distinguishing $\cP$ from $\cM \setminus \cP$
  with finite-sample level $\alpha$.
\end{proposition}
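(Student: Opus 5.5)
We will construct the test explicitly: a permutation test with finite-sample level, driven by a statistic that separates dependence on Polish spaces, with a level schedule tending to zero fast enough for Borel--Cantelli.

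\emph{Reduction.} Since $\cX$ and $\cY$ are Polish, there are Borel isomorphisms onto Borel subsets of $[0,1]$ \citep[Theorem 15.6]{Kechris1995}. These preserve independence and dependence, so we may assume $\cX, \cY \subseteq [0,1]$.

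\emph{Statistic.} Use countable generating $\pi$-systems of intervals $\cA_j, \cB_k$ with rational endpoints, and weights $w_{jk} > 0$. Set
\[
  \Delta(\mu) = \sum_{j,k} w_{jk} \abs{\mu(\cA_j \times \cB_k) - \mu_X(\cA_j)\mu_Y(\cB_k)}.
\]
As in the proof of \cref{thm:ci-functional}, $\Delta(\mu) = 0$ if and only if $X \indep Y$. Let $T_n$ be the plug-in version computed from the empirical measure. Because both families of sets are VC classes (intervals, and products of intervals), uniform Glivenko--Cantelli with exponential bounds gives $\sup_{j,k} \abs{\hat\mu_n(\cA_j\times\cB_k) - \mu(\cA_j\times\cB_k)} \to 0$ almost surely. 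Hence $T_n \to \Delta(\mu)$ almost surely, and DKW/VC tail bounds show $P(T_n > \delta) \le C e^{-c n \delta^2}$ under any null.

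\emph{Level and strong consistency.} Reject when $T_n$ exceeds the $(1-\alpha_n)$ permutation quantile of $T_n$ recomputed on data with the $Y$'s permuted. Under $H_0$ the data are exchangeable under permutations of $Y$, so the permutation test has exact finite-sample level $\alpha_n$, using randomization at ties. Choose $\alpha_n \le \alpha$ with $\sum_n \alpha_n < \infty$, say $\alpha_n = \alpha/n^2$.

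Under the null, the number of rejections is finite almost surely by Borel--Cantelli. Under an alternative we need the permutation quantile to vanish while $T_n \to \Delta(\mu) > 0$. For this, a permuted sample is distributed like an i.i.d. sample from $\mu_X \otimes \mu_Y$ up to small sampling-without-replacement corrections. A concentration bound for the permuted statistic, for instance via the VC bound applied to the permutation distribution or Hoeffding's comparison of with- and without-replacement sampling, gives $P_\pi(T_n^\pi > \delta) \le C n^{2} e^{-c n \delta^2}$ uniformly over the data. This is far below $\alpha/n^2$ for any fixed $\delta$ and large $n$. So the quantile is eventually below $\delta = \Delta(\mu)/2$, while $T_n > \delta$ eventually almost surely; hence the test eventually always rejects.

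\emph{Main obstacle.} The delicate step is this uniform-in-data concentration of the permutation distribution. It must be exponential rather than merely polynomial in $n$, so that it beats the shrinking level $\alpha_n$. If the direct permutation argument proves awkward, the fallback is to truncate to finitely many $(j,k) \le K_n$ and apply a union bound over sets combined with Serfling's inequality for sampling without replacement.
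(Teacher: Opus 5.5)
Your proposal is correct, and it takes a genuinely different route from the paper, both in the statistic and in how the permutation quantile is controlled.

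\emph{The paper's route.} The paper works directly on the Polish spaces. It embeds $\cX,\cY$ continuously and injectively into $\ell_2$ and uses HSIC with a Gaussian kernel, which is integrally strictly positive definite by a cited result, so HSIC characterizes independence. Almost-sure convergence comes from the strong law in a Hilbert space. The paper then computes the permutation mean of HSIC exactly, showing it is below $1/n$ for any data. Finally it applies concentration on the symmetric group to $\sqrt{\mathrm{HSIC}}$, which moves by $O(1/n)$ under a transposition. This gives $\hat q_{1-\alpha_n}=O(\log n/n)$ deterministically.

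\emph{Your route.} You pass through a Borel isomorphism to $[0,1]$ and use the rectangle-discrepancy functional over rational intervals. This makes the key step elementary, but it should be stated concretely rather than via the heuristic that a permuted sample is ``like'' an i.i.d.\ sample from $\mu_X\otimes\mu_Y$. The concrete argument runs as follows.
\begin{itemize}
  \item Permuting the $Y$'s leaves both empirical marginals unchanged.
  \item For a fixed rectangle, $n\hat\mu^\pi(A\times B)$ is hypergeometric with mean exactly $n\hat\mu_X(A)\hat\mu_Y(B)$.
  \item Hoeffding's without-replacement inequality therefore gives a tail of $2e^{-2nt^2}$, whatever the data.
  \item Intervals cut out at most $1+n(n+1)/2$ distinct subsets of $n$ points on each axis. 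The union bound is therefore over $O(n^4)$ events rather than your $n^2$, though any polynomial suffices.
  \item With $\sum_{j,k}w_{jk}=1$, this yields $P_\pi(T_n^\pi>t)\le Cn^4e^{-2nt^2}$ uniformly in the data. The quantile is then of order $\sqrt{\log n/n}$, which is the comparable scale for an unsquared statistic.
\end{itemize}

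Two smaller points. Almost-sure convergence of $T_n$ needs only countably many strong laws plus dominated convergence, not Glivenko--Cantelli. The DKW/VC tail bound for $T_n$ under the null is unnecessary, since exactness of the permutation test already gives level $\alpha_n$.

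\emph{What each approach buys.} Your route avoids RKHS theory entirely: no ispd kernels, no Hilbert-space strong law, no symmetric-group concentration. However, the reduction to the line is essential to it, because the polynomial trace count depends on using intervals. The resulting statistic is also not continuous in the original topology. The paper's route keeps the Polish topology and yields the standard, practically used HSIC permutation test, at the cost of citing those kernel facts.
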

\begin{proof}
  Let $T_\cX$ be a continuous, injective mapping from $\cX$ to $\ell_2$;
  such a mapping is guaranteed to exist by the Urysohn embedding theorem \parencite[Theorem 4.14]{Kechris1995}
  and any continuous embedding of $[0, 1]^{\mathbb N}$ in $\ell_2$,
  e.g.\ $(z_i)_{i\ge1} \mapsto (2^{-i} z_i)_{i\ge1}$.
  By Proposition 5.2 of \textcite{Ziegel2024May},
  the (bounded, continuous) kernel
  $k_\cX(x, x') = \exp\left( - \norm{T_\cX(x) - T_\cX(x')}_{\ell_2}^2 \right)$
  is integrally strictly positive definite (ispd).
  Let $\varphi_\cX$ be a feature map, $k_\cX(x, x') = \inner{\varphi_\cX(x)}{\varphi_\cX(x')}$.
  Define $k_\cY$, $\varphi_\cY$ analogously,
  and let $\varphi_{\cX \cY} = \varphi_\cX \otimes \varphi_\cY$.
  The Hilbert--Schmidt Independence Criterion \citep{hsic} is
  \[
    \Delta(P)
    = \norm*{ P \varphi_{\cX \cY} - (P_X \, \varphi_\cX) \otimes (P_Y \, \varphi_\cY)}^2
  ;\]
  the expectations exist since the kernels are bounded,
  and
  $\Delta(P) > 0$ iff $X \nindep Y$ since the kernels are ispd \citep{Szabo2018}.
  Letting $\hat P = \frac1n \sum_{i=1}^n \delta_{x_i} \otimes \delta_{y_i}$,
  the strong law of large numbers in separable Hilbert spaces
  gives $\Delta(\hat P) \to \Delta(P)$ almost surely.

  For $n=1$, let the test accept; thus assume $n\ge2$.
  Let $\pi$ be a permutation
  and $\hat P^{(\pi)} = \frac1n \sum_{i=1}^n \delta_{x_i} \otimes \delta_{y_{\pi_i}}$.
  Taking $\Pi$ to be uniform over the $n!$ permutations,
  let $\hat q_{1-\alpha}$ be the $(1-\alpha)$th quantile of
  $\Delta(\hat P^{(\Pi)})$.
  By a standard argument,
  if $X \indep Y$
  then $\Delta(\hat P) > \hat q_{1-\alpha}$ with probability at most $\alpha$;
  \citet{Hemerik2018} give a useful generalized account.
  For strong consistency, though,
  we need asymptotic size $0$, not $\alpha$.
  We will thus use $\alpha_n = \alpha / n^2$ for $\psi_n$;
  each test has level $\alpha_n \le \alpha$,
  preserving the finite-sample level.
  As $\sum_{n \ge 1} \alpha_n = \pi^2 \alpha / 6$ is finite,
  the Borel--Cantelli lemma shows the test falsely rejects
  an almost surely finite number of times.

  For the alternative, when $\Delta(P) > 0$,
  we must understand the quantile $\hat q_{1 - \alpha_n}$.
  We will do so with a concentration inequality
  on $\Delta(\hat P^{(\Pi)}) \mid \hat P$,
  and begin by showing that, regardless of $\hat P$,
  $\E[ \Delta(\hat P^{(\Pi)}) \mid \hat P ] < 1 / n$.\footnote{%
    If $n\alpha_n\to\infty$, this result and Markov's inequality give
    $\hat q_{1-\alpha_n}\le1/(n\alpha_n)\to0$.
    Our summable choice of $\alpha_n$ requires a sharper bound.
  }
  To evaluate this,
  notice that \citep{hsic}
  \[
    \Delta(\hat P^{(\pi)})
    = \frac{1}{n^2} \inner{K_\cX}{H K_\cY^{(\pi)} H}_F
    \quad
    \text{for }
    [K_\cX]_{ij} = k_\cX(x_i, x_j)
    \text{, }
    [K_\cY^{(\pi)}]_{ij} = k_\cY(y_{\pi_i}, y_{\pi_j})
  ,\]
  where $H = I - \frac1n \bone\bone\tp$ is the centring matrix
  and $\inner{A}{B}_F = \sum_{ij} A_{ij} B_{ij}$ the Frobenius inner product.
  By linearity,
  \[
    \E_\Pi[ \Delta(\hat P^{(\Pi)}) \mid \hat P]
    = \frac{1}{n^2} \inner{K_\cX}{H \E[ K_\cY^{(\Pi)} \mid \hat P] H}_F
  .\]
  Call the inner expectation $M$;
  it has diagonal $1$
  and constant off-diagonal
  \[
    \ell_Y = \frac{1}{n(n-1)} \sum_{i \ne j}^n k_\cY(y_i, y_j),
  \]
  so
  $M = \ell_Y \bone\bone\tp + (1 - \ell_Y) I$.
  Then $
    H M H
    = (1-\ell_Y) H
  $,
  and so defining $\ell_X$ analogously,
  \[
    \E_\Pi[ \Delta(\hat P^{(\Pi)}) \mid \hat P]
    = \frac{1 - \ell_Y}{n^2} \inner{K_\cX}{H}_F
    = \frac{1 - \ell_Y}{n^2} \left( (n -1) (1 - \ell_X) \right)
    < \frac1n
  \]
  since $\ell_X, \ell_Y \in (0, 1]$.

  Finally, we show the quantile is not too far from the mean.
  Let $f(\pi) = \sqrt{\Delta(\hat P^{(\pi)})} = \norm{\hat P^{(\pi)} \varphi_{\cX\cY} - (\hat P_X \varphi_\cX) \otimes (\hat P_Y \varphi_\cY)}$;
  we already know that $\E f(\Pi) \le \sqrt{\E \Delta(\hat P^{(\Pi)})} \le 1 / \sqrt{n}$.
  Since every $\varphi_{\cX\cY}(x, y)$ has unit norm,
  $\abs{f(\pi) - f(\pi')} \le \norm{(\hat P^{(\pi)} - \hat P^{(\pi')}) \varphi_{\cX\cY}} \le 2 \, d(\pi, \pi')$
  for the normalized Hamming distance $d(\pi, \pi') = \frac1n \abs{\{ i : \pi_i \ne \pi'_i \}}$;
  in particular $f$ changes by at most $4/n$ when two coordinates of $\pi$ are transposed.
  Concentration on the symmetric group
  \citep[Theorem 5.2.6]{Vershynin2026Jan}
  then implies that
  $\Pr( \abs{f(\Pi) - \E f(\Pi)} \ge t) \le 2 \exp(- c n t^2)$
  for a constant $c$,
  and thus $\hat q_{1-\alpha_n} \le \left( \frac{1}{\sqrt n} + \sqrt{\frac{\log(2/\alpha_n)}{c n}} \right)^2 = \mathcal O\left( \frac{\log n}{n} \right)$.
  We therefore know $\hat q_{1-\alpha_n} \to 0$
  as $n \to \infty$ deterministically,
  while almost surely $\Delta(\hat P) \to \Delta(P) > 0$.
  Thus, almost surely $\Delta(\hat P)$ eventually exceeds $\hat q_{1-\alpha_n}$,
  showing strong consistency.
\end{proof}

\begin{proposition}[store*=thm:discrete-consistent]
  Let $\cX, \cY, \cZ$ be Polish spaces,
  $\cD$ the set of all Borel probability measures on $\cX \times \cY \times \cZ$
  whose marginal on $\cZ$ is discrete,
  and $\cP \subseteq \cD$ the distributions with $X \indep Y \mid Z$.
  For any $\alpha \in (0, 1)$,
  there exists a strongly consistent sequence of tests
  distinguishing $\cP$ from $\cD \setminus \cP$
  with finite-sample level $\alpha$.
\end{proposition}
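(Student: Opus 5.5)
The plan is to stratify by the value of $Z$ and run the permutation HSIC test of \cref{thm:consistent-ind-test} inside each stratum. Permutations act only within strata, so finite-sample level is exact, and the level budget is spread across strata and sample sizes so that it is summable, which lets Borel--Cantelli deliver strong consistency.

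\emph{Statistic.} Fix the ispd kernels $k_\cX,k_\cY$ and feature maps from the proof of \cref{thm:consistent-ind-test}. For each value $z$ observed in the sample, let $S_z=\{i:Z_i=z\}$ with $n_z=\abs{S_z}$, and let $\hat P_z$ be the empirical law of $(X_i,Y_i)_{i\in S_z}$. Define the weighted statistic
\[
  T=\sum_{z} \frac{n_z}{n}\,\Delta(\hat P_z).
\]
Its population counterpart is $\sum_z \mu_Z(\{z\})\,\Delta(\mu_{XY\mid Z=z})$. Since $\mu_Z$ is discrete, this quantity is positive if and only if $X\nindep Y\mid Z$.

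\emph{Calibration.} Draw $\Pi$ uniformly from the permutations preserving every stratum, meaning $Y$ values are permuted only within each $S_z$. Reject when $T$ exceeds the $(1-\alpha_n)$ quantile of $T^{(\Pi)}$, with $\alpha_n=\alpha/n^2$. Under $H_0$, conditional on $(Z_i)$ and on the $X$'s, the $Y$'s within each stratum are exchangeable. The standard group-invariance argument (\citealp{Hemerik2018}) then gives level $\alpha_n\le\alpha$ for every $n$, and Borel--Cantelli gives only finitely many false rejections almost surely.

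\emph{Power.} Under an alternative, pick an atom $z_0$ with $\mu_Z(\{z_0\})=p>0$ and $\Delta(\mu_{XY\mid z_0})>0$. Almost surely $n_{z_0}/n\to p$, and by the Hilbert-space SLLN applied along the stratum subsequence, $\Delta(\hat P_{z_0})\to\Delta(\mu_{XY\mid z_0})$. Hence $\liminf T\ge p\,\Delta(\mu_{XY\mid z_0})>0$ almost surely.

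The main obstacle is showing that the permutation quantile tends to zero, even though there may be infinitely many atoms. I would bound the mean first. The computation in \cref{thm:consistent-ind-test} gives $\E[\Delta(\hat P_z^{(\Pi)})]<1/n_z$ in each stratum with $n_z\ge2$, while singleton strata contribute zero. This yields
\[
  \E[T^{(\Pi)}]\le\sum_z \frac{n_z}{n}\cdot\frac1{n_z}=\frac{K_n}{n},
\]
where $K_n$ is the number of distinct observed values. For a discrete law, $K_n/n\to0$ almost surely, by a standard occupancy argument: split into a finite set of heavy atoms plus a tail of small total mass.

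For concentration, $T^{(\Pi)}$ is a function of independent uniform permutations on the strata. If a transposition inside stratum $z$ changes $n_z\Delta(\hat P_z)$ by $O(1)$, then $T$ changes by $O(1/n)$. A bounded-differences inequality for products of symmetric groups, adapted from \citet[Theorem 5.2.6]{Vershynin2026Jan}, should then give deviations of order $\sqrt{\log(1/\alpha_n)/n}$.

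There is one subtlety: $\Delta$ is a squared norm. I would therefore work with $f=\sqrt{n_z\Delta}$ per stratum, or bound directly, noting $n_z\Delta(\hat P_z)\le 4n_z$. If this route proves delicate, the fallback is to restrict $T$ to the strata among the first $J_n$ atoms in a fixed enumeration, where $J_n\to\infty$ slowly. That makes the count of strata deterministic while retaining consistency, since eventually $z_0$ is included. In either version the quantile is $o(1)$ almost surely, so $T$ eventually exceeds it, which gives strong consistency.
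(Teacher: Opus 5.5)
Your proof is correct, but it takes a different route from the paper.

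\emph{The paper's route.} The paper never pools strata. It runs the independence test of \cref{thm:consistent-ind-test} separately on the group belonging to the $i$-th distinct $Z$ value, in order of first appearance. That group is tested at level $\alpha 2^{-i}$, i.e.\ level $\alpha 2^{-i}/m^2$ when it holds $m$ points, and the paper rejects through the Bonferroni-type combination $\min\{1,\sum_i\phi_i\}$. Finite-sample level is then just $\sum_i\alpha 2^{-i}=\alpha$. Null strong consistency is Borel--Cantelli over the summable family of (group, size) pairs. Power follows from strong consistency of the subroutine on the dependent atom's group at its fixed level $\alpha 2^{-J}$. Because the subroutine is used as a black box, no new mean bound, concentration inequality or occupancy lemma is needed.

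\emph{What your route buys and costs.} Your single stratified permutation test on the pooled statistic $T$ is closer to what one would run in practice. It does not spend level on atoms according to their order of appearance, and power needs only $\liminf T>0$. The price is that you must redo the calibration analysis for products of symmetric groups, with the occupancy fact $K_n/n\to0$.

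\emph{The calibration step goes through.} Each per-stratum norm is at most $2$, and a transposition moves it by at most $4/n_z$. So a transposition in stratum $z$ changes $\Delta(\hat P_z)$ by at most $16/n_z$, and hence changes $T$ by at most $16/n$. Generate the within-stratum permutations by Fisher--Yates, which uses $n-K_n$ independent choices; changing one choice alters one stratum's permutation by at most two transpositions. McDiarmid's inequality then gives deviations $O(\sqrt{\log(1/\alpha_n)/n})$, so
\[
\hat q_{1-\alpha_n}\le K_n/n+O\bigl(\sqrt{\log n/n}\bigr)\to0 .
\]
Your worry about $\Delta$ being a squared norm is therefore harmless. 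The paper's square root only sharpens the rate to $O(\log n/n)$, which consistency does not require.

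\emph{Drop the fallback as stated.} A test cannot use a fixed enumeration of the atoms of the unknown $\mu_Z$: for uncountable $\cZ$, no countable list covers the atoms of every law in the model. You would need a data-driven order, such as order of first appearance, which is what the paper uses.
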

\begin{proof}
  We will use as a subroutine the independence test of \cref{thm:consistent-ind-test};
  let $\phi^{(a)}$ denote that test sequence at finite-sample level $a$,
  so $\phi_m^{(a)}$ has level $a/m^2$.

  Denote the $k \le n$ unique values of $z$ observed by $\psi_n$
  by $z_{(1)}, \dots, z_{(k)}$
  in order of first appearance:
  $z_{(1)} = z_1$,
  $z_{(j+1)} = z_{\min\{ i \ge 1 : z_i \notin \{ z_{(1)}, \dots, z_{(j)} \} \}}$.
  Partition the $n$ inputs into the $k$ subsequences
  $g_{i,n} = ((x_j,y_j):1\le j\le n,\ z_j=z_{(i)})$,
  retaining observation order and any repeated values.
  Then let $\psi_n = \min\{ 1, \sum_{i=1}^k \phi_{\abs{g_{i,n}}}^{(\alpha 2^{-i})}(g_{i,n}) \}$.

  Let $P \in \cP$.
  Conditional on the full sequence $Z_1,Z_2,\dots$,
  each group $g_{i,n}$ contains an increasing number of samples from $P_{XY \mid Z = z_{(i)}}$, with each group independent.
  By $\phi$'s finite-sample level,
  $\E\bigl[ \phi_{\abs{g_{i,n}}}^{(\alpha 2^{-i})}(g_{i,n}) \mid Z_1, Z_2, \dots \bigr] \le \alpha 2^{-i}$;
  therefore
  $P^n \psi_n \le \sum_{i \ge 1} \alpha 2^{-i} = \alpha$,
  showing the finite-sample level.
  For strong consistency,
  instead argue via the Borel--Cantelli lemma
  that almost surely
  only a finite number of (group $i$, size $m$) pairs
  ever reject:
  $\sum_{i,m} \alpha / (2^i m^2) = \pi^2 \alpha / 6 < \infty$.
  Each such pair causes $\psi_n$ to reject only while $\abs{g_{i,n}} = m$,
  which corresponds to only finitely many $n$ values,
  since each observed value is an atom of $P_Z$
  and so $\abs{g_{i,n}} \to \infty$.
  Thus almost surely, $\psi_n$ falsely rejects only finitely often.

  For $Q \in \cD \setminus \cP$,
  there is an atom $z$ with $Q_Z(\{z\}) > 0$
  such that the conditional distribution $Q_z := Q_{X Y \mid Z = z}$ is dependent.
  Let $J$ be the random rank of $z$'s first appearance,
  which is almost surely finite.
  Conditional on the full sequence $Z_1,Z_2,\dots$,
  group $J$ is an i.i.d.\ sequence from $Q_z$
  tested at the fixed level $\alpha 2^{-J}$.
  Again $\abs{g_{J,n}} \to \infty$ almost surely,
  and so strong consistency of $\phi$ implies $\psi_n \to 1$ almost surely.
\end{proof}

\section{Lossless feature selection}
\label{sec:gyorfi-walk}

\Textcite[Section~6]{gyorfiEtAl2023} ask whether a universally strongly consistent test of losslessness exists.
Losslessness of a transformation $T$ for predicting $Y$ from $X$ is equivalent to
$X\indep Y\mid T(X)$ \parencite[Section~2.2]{gyorfiEtAl2023}.

For a Euclidean-valued transformation $Z=T(X)$, the augmented law of $(X,Y,Z)$ is singular,
so our density-model theorem cannot be applied to that triple directly.
For coordinate selection in particular, however, we can easily work around this issue.
For $S\subseteq\{1,\dots,d_X\}$, write $X_S$ for the subvector of $X$ with coordinates in $S$.
If $S$ and $S^c$ are both nonempty, then
\begin{equation}
  X\indep Y\mid X_S
  \quad\text{iff}\quad
  X_{S^c}\indep Y\mid X_S.
  \label{eq:feature-selection}
\end{equation}
The triple $(X_{S^c},Y,X_S)$ is a coordinate permutation of $(X,Y)$,
and any test based on $(X,Y,X_S)$ is a measurable test of this triple.
Thus, by \cref{thm:main-intro} applied to this triple, with dimensions $\abs{S^c}$, $d_Y$, $\abs{S}$ and any finite $M$,
no test of losslessness for coordinate selection can have pointwise asymptotic level $\alpha<1$ and consistency against every alternative over all absolutely continuous bounded-support laws of $(X,Y)$.
In particular there is no strongly consistent test on this model,
which resolves the unrestricted existence question of \textcite[Section~6]{gyorfiEtAl2023} for coordinate selection.

Our result does not imply impossibility for every transformation: if $T$ is injective with a measurable inverse on its image, then $X$ is a function of $Z$ and losslessness holds for every law.

\end{appendix}

\begin{acks}[Acknowledgments]
  The author is also affiliated with the Alberta Machine Intelligence Institute.
  She would like to thank Ilmun Kim for drawing her attention to this problem,
  and Zheng He, Feng Liu, Aaron Wei, and Nathaniel Xu for inspiring conversations.
  Claude (Fable 5.1 and Opus 5) and GPT (6 Astra and 5.6 Sol)
  were used in the preparation of this manuscript;
  the author has carefully verified and edited all large language model outputs,
  and takes full responsibility for the paper.
\end{acks}
\begin{funding}
  This work was supported in part by the Canada CIFAR AI Chairs program.
\end{funding}

\printbibliography[heading=bibintoc]

\end{document}